\newcommand{\wh}{\widehat}
\newcommand{\Res}{\mathrm{Res}}
\newcommand{\ot}{\otimes}
\newcommand{\op}{\oplus}
\newcommand{\CC}{\mathcal{C}}
\newcommand{\CA}{\mathcal{A}}
\newcommand{\g}{\mathfrak{g}}
\newcommand{\fgl}{\mathfrak{gl}}
\newcommand{\C}{\mathbb{C}}
\newcommand{\D}{\mathcal{D}}
\newcommand{\N}{\mathbb N}
\newcommand{\Z}{\mathbb Z}
\newcommand{\Hom}{\mathrm{Hom}}
\newcommand{\End}{\mathrm{End}}
\newcommand{\U}{\mathcal{U}}
\newcommand{\te}[1]{\textnormal{{#1}}}
\theoremstyle{Theorem}
\theoremstyle{Theorem}
\newtheorem{thm}{Theorem}[section]
\newtheorem{lemt}[thm]{Lemma}
\newtheorem{prpt}[thm]{Proposition}
\newtheorem{remt}[thm]{Remark}
\newtheorem{dfnt}[thm]{Definition}
\def\({\left(}
\def\){\right)}
\newlength{\dhatheight}
\def \<{{\langle}}
\def \>{{\rangle}}
\newcommand{\vac}{\mathbf{1}}
\newcommand{\E}{{\mathcal{E}}}
\numberwithin{equation}{section}
\title[ $(G,\chi_\phi)$-equivariant $\phi$-coordinated modules for vertex algebras]{$(G,\chi_\phi)$-equivariant $\phi$-coordinated quasi modules for vertex algebras}
\author{Fulin Chen$^1$}
\address{School of Mathematical Sciences, Xiamen University,
 Xiamen, China 361005} \email{chenf@xmu.edu.cn}
 \thanks{$^1$Partially supported by China NSF grant (No.11971397) and the Fundamental
Research Funds for the Central Universities (No.20720190069).}
 \author{Xiaoling Liao}\address{School of Mathematical Sciences, Xiamen University,
 Xiamen, China 361005} \email{xiaoling@stu.xmu.edu.cn}
 \author{Shaobin Tan$^2$}\address{School of Mathematical Sciences, Xiamen University,
 Xiamen, China 361005} \email{tans@xmu.edu.cn }\thanks{$^2$Partially supported by China NSF grants (No.11971397)}
\author{Qing Wang$^3$}\address{School of Mathematical Sciences, Xiamen University,
 Xiamen, China 361005} \email{qingwang@xmu.edu.cn }\thanks{$^3$Partially supported by
 China NSF grants (No.12071385) and the Fundamental Research Funds for the Central Universities (No.20720200067).}
\subjclass[2010]{17B65 \& 17B69}
\begin{document}
\bibliographystyle{plain}

\begin{abstract}To give a unified treatment on the association of Lie algebras and vertex algebras, we study $(G,\chi_\phi)$-equivariant $\phi$-coordinated quasi modules for vertex algebras,
where $G$ is a group with $\chi_\phi$ a linear character of $G$ and  $\phi$ is an associate of the
one-dimensional additive formal group. The theory of $(G,\chi_\phi)$-equivariant $\phi$-coordinated quasi modules for nonlocal vertex algebra is established in \cite{JKLT}. In this paper, we concentrate on the context of vertex algebras. We establish  several conceptual results, including a
generalized commutator formula and a general construction of vertex algebras and their $(G,\chi_\phi)$-equivariant $\phi$-coordinated quasi modules.
Furthermore, for any conformal algebra $\CC$, we construct a class of Lie algebras $\wh{\CC}_\phi[G]$  and prove that
 restricted $\wh{\CC}_\phi[G]$-modules are exactly $(G,\chi_\phi)$-equivariant $\phi$-coordinated quasi modules for the
 universal enveloping vertex algebra of $\CC$.
As an application, we determine the $(G,\chi_\phi)$-equivariant $\phi$-coordinated quasi modules for  affine and Virasoro vertex algebras.
\end{abstract}
\maketitle
\section{Introduction}
Modules and twisted modules have been extensively studied in vertex algebra theory.  We usually associate Lie algebras with vertex algebras through modules or twisted modules (see \cite{FLM,FZ,Li1,Li6}, etc). But these theories are not valid for some algebras which generating functions are not ``local", we need generalize the notion of modules.  A notion of quasi module of vertex algebra was introduced in \cite{Li5} to associate more infinite dimensional Lie algebras like quantum torus Lie algebras with vertex algebras.
   In order to associate quantum affine algebras with quantum vertex algebras, a theory of $\phi$-coordinated module for the quantum vertex algebra was developed in \cite{Li2}. The parameter $\phi$ is an associate of the one-dimensional additive formal group $F(x,y)=x+y$.
When $\phi=x+z$, a $\phi$-coordinated module of vertex algebra is just a module.
To establish the explicit connections of quantum vertex algebras to various algebraic structures like quantum affine algebras, a notion of  $(G,\chi_\phi)$-equivariant $\phi$-coordinated quasi module for nonlocal vertex algebras was introduced in \cite{JKLT}, where $G$ is a group and
$\chi_\phi$ is a linear character of $G$.
 In \cite{Li2}, the author proved that for every $p(x)\in \C((x))$, $\phi=e^{zp(x)\frac{d}{dx}}(x)$ is an associate
of $F(x,y)$ and every associate is of this form.
Moreover, an explicit association between quantum vertex algebra and certain quantum $\beta\gamma$-system was established in \cite{Li2} through  $\phi$-coordinated modules, where $\phi=xe^z$, i.e., $p(x)=x$.
Recently, the $\phi_\epsilon$-coordinated modules of vertex algebras have been studied precisely in \cite{BLP} for $p(x)=x^\epsilon$ with $\epsilon$ an integer, and they use the theory of $\phi_\epsilon$-coordinated modules to associate vertex algebras with Novikov algebras. Specifically, for $\phi=xe^z$, a notion of  $G$-equivariant $\phi$-coordinated quasi module for quantum vertex algebras was introduced in \cite{Li4}. A commutator formula for  $G$-equivariant $\phi$-coordinated quasi module was obtained and a natural connection between the
deformed Virasoro algebra $Vir_{p,-1}$ of  a certain Clifford vertex superalgebra  through $G$-equivariant $\phi$-coordinated quasi module was established therein. Later in  \cite{JKLT}, a generalized commutator formula and a general construction of $(G,\chi_\phi)$-equivariant $\phi$-coordinated quasi modules for nonlocal vertex algebras were established, and by using these results, they give a construction of  (equivariant) $\phi$-coordinated quasi modules for lattice vertex algebras. We note that if $\phi=xe^z$, the notion of $(G,\chi_\phi)$-equivariant $\phi$-coordinated quasi module coincides with the notion of  $G$-equivariant $\phi$-coordinated quasi module.  Motivated by these works, in this paper,  we focus on the frame of vertex algebra.  We expect to unify the association of Lie algebras and vertex algebras though $(G,\chi_\phi)$-equivariant $\phi$-coordinated quasi modules.

 Next, we briefly outline our ideas. Let $\phi=e^{zp(x)\frac{d}{dx}}(x)$ for $0\neq p(x)\in\C((x))$. A $\phi$-coordinated module for a  vertex algebra $V$ is a vector space $W$ with a linear map $Y_W(\cdot,z)$ from $V$ to $\Hom(W,W((z)))$ satisfying the conditions that $Y_W(\vac,z)=1_W$ and that for $u,v\in V$, there exists a nonnegative integer $k$ such that
  \begin{eqnarray*}
&&(z_1-z_2)^kY_{W}(u,z_{1})Y_{W}(v,z_{2})\in\Hom(W,W((z_{1},z_{2}))),\\
&&(\phi(z_{2},z_{0})-z_2)^k Y_{W}(Y(u,z_{0})v,z_{2})=((z_1-z_2)^k Y_{W}(u,z_{1})Y_{W}(v,z_{2}))\mid_{z_{1}=\phi(z_{2},z_{0})}.
\end{eqnarray*} By generalizing the corresponding result in \cite{BLP}, we prove that if $(W,Y_W)$ is a $\phi$-coordinated $V$-module, then for $u,v\in V$, we have the commutator formula
  \begin{equation*}\begin{split}
[Y_{W}(u,z_{1}),Y_{W}(v,z_{2})]=\sum_{i\geq0}\frac{1}{i!}Y_{W}(u_{i}v,z_{2})\left(p(z_2)\frac{\partial}{\partial z_{2}}\right)^{i}p(z_1)z_{1}^{-1}\delta\left(\frac{z_{2}}{z_{1}}\right).
\end{split}\end{equation*}
 For any vector space $W$, we prove that any (quasi) local subset of $\E(W)(=\Hom(W,W((z))))$ can generate a vertex algebra with $W$ is naturally a $\phi$-coordinated (quasi) module of it.
To give a unified treatment on the association of Lie algebras and vertex algebras, we need the notion of conformal algebra which was introduced in \cite{K} (also known as a vertex Lie algebra in \cite{DLM,P}) as a bridge. A conformal algebra $\CC$ is a $\C[\partial]$-module with a $\C$-bilinear product
$a_nb$ for each $n\in\N$ satisfying some axioms.
Define a multiplication on the formal loop space $\CC\ot \C((x))$ by
 \begin{eqnarray*}
 [a\ot f(x),b\ot g(x)]=\sum_{i\geq0}\frac{1}{i!}(a_ib)\ot \left(\left(p(x)\frac{d}{d x}\right)^{i}f(x)\right)g(x)
\end{eqnarray*}
for  $a,b\in\CC$ and $f(x),g(x)\in\C((x))$.
Then the quotient space
\begin{align*}
\wh{\CC}_\phi:=\CC\ot \C((x))/\mathrm{Im}\, (\partial \ot 1+1\ot p(x)\frac{d}{d x})
\end{align*}
with this multiplication form a Lie algebra \cite{K}. For $a\in\CC$, we set
\[a_{\phi}(z)=\sum_{n\in\Z}\overline{a\ot x^np(x)}z^{-n-1}\in \wh{\CC}_\phi[[z,z^{-1}]],\]
where $\overline{a\ot x^np(x)}$ is the image of $a\ot x^np(x)$ in $\wh{\CC}_\phi$. Then for $a,b\in \CC$,
\begin{eqnarray*}
[a_{\phi}(z),b_{\phi}(w)]=\sum_{i\geq0}\frac{1}{i!}(a_ib)_{\phi}(w) \left(p(w)\frac{\partial}{\partial w}\right)^{i}p(z)z^{-1}\delta\left(\frac{w}{z}\right).
\end{eqnarray*}
We prove that restricted $\wh{\CC}_\phi$-modules are exactly $\phi$-coordinated  modules for the
 universal enveloping vertex algebra $V_\CC$ of $\CC$.

Let $G$ be a group with a linear character $\chi$. Following \cite{JKLT}, a {\em $(G,\chi)$-vertex algebra}
is a vertex algebra $V$ with a $G$-action $R$ on $V$ such that $R_g(\vac)=\vac$ and
\[R_gY(v,z)R_g^{-1}=Y(R_g(v),\chi(g)z)\quad\te{for }g\in G,v\in V.\] The notion of $(G,\chi)$-vertex algebra was introduced in \cite{Li3} (cf. \cite{Li5}).
A $(G,\chi_\phi)$-equivariant $\phi$-coordinated quasi module is defined for a $(G,\chi)$-vertex algebra and from \cite{JKLT}, we may always assume that $$\phi(x,\chi(g)z)=\chi_\phi(g)\phi(\chi_\phi(g)^{-1}x,z)\quad \te{for }g\in G.$$ In this paper, we obtain  a general construction of vertex algebras and their $(G,\chi_\phi)$-equivariant $\phi$-coordinated quasi modules based on \cite{JKLT}. Let $(W,Y_W)$ be a $(G,\chi_\phi)$-equivariant $\phi$-coordinated quasi $V$-module, we prove the following generalized commutator formula:
\begin{equation*}\begin{split}
&[Y_W(u,z_1),Y_W(v,z_2)]\\
=\Res_{z_0}&\sum_{g\in \psi(\chi_\phi(G))}Y_W(Y(R_{g^{-1}}u,z_0)v,z_2)e^{z_0p(z_2)\frac{\partial}{\partial z_2}}\left(\chi(g) p(z_1)z_1^{-1}\delta\left(\frac{\chi_\phi(g)z_2}{z_1}\right)\right),
\end{split}\end{equation*}
 where $\psi:\chi_\phi(G)\rightarrow G$ is a section of $\chi_\phi$. The notion of $(G,\chi)$-conformal algebra was introduced in  \cite{Li3} which generalizes the notion of $\Gamma$-conformal algebra in \cite{G-K-K}. A {\em $(G,\chi)$-conformal algebra} is a conformal
 algebra $\CC$
 with a $G$-action on $\CC$ satisfying $$R_g(a_{m}b)=\chi(g)^{-(m+1)}(R_ga)_{m}(R_gb)\quad\te{ for any }a,b\in\CC, m\in \N,$$ and some other compatible conditions.
 Let $(\CC,R)$ be a $(G,\chi)$-conformal algebra,
for any $g\in G$, we define a linear automorphism $\hat R_g$ on $\CC\ot \C((x))$ by
\begin{align*}
\hat R_g(a\ot f(x))=\chi(g)^{-1}R_g(a)\ot f(\chi_\phi(g)^{-1}x),\quad a\in \CC, f(x)\in \C((x)).
\end{align*} Then we define a multiplication on $ \wh\CC_\phi$ by
\begin{align*}
(u,v)\mapsto \sum_{g\in G} [\hat R_g(u),v],\quad u,v\in \wh\CC_\phi.
\end{align*} It follows that the
quotient space
\begin{align*}
\wh\CC_\phi[G]:=\wh\CC_\phi/\te{span}\{\hat R_g(u)-u\mid g\in G, u\in \wh\CC_\phi\}
\end{align*}
is a Lie algebra  with  this new multiplication.
For $a\in \CC$, $f(x)\in \C((x))$, we still denote the image of $\overline{a\ot f(x)}$ in $\wh\CC_\phi[G]$ by $\overline{a\ot f(x)}$.
For $a\in\CC$, we set
\[a^G_{\phi}(z)=\sum_{n\in\Z}\overline{a\ot x^np(x)}z^{-n-1}\in \wh\CC_\phi[G][[z,z^{-1}]].\]
Then for $a,b\in \CC$, we have
\begin{equation*}\begin{split}
&[a^G_{\phi}(z),b^G_{\phi}(w)]\\
=&\sum_{g\in G}\sum_{i\geq 0}\frac{1}{i!}\left((R_{g^{-1}}a)_{i}b\right)^G_{\phi}(w)\left(p(w)\frac{\partial}{\partial w}\right)^{i}\left(\chi(g) p(z)z^{-1}\delta\left(\frac{\chi_\phi(g)w}{z}\right)\right).
\end{split}\end{equation*}

Let $H=\ker\chi_\phi$ and let $\bar\chi_\phi$ be the linear character of $G/H$ induced by $\chi_\phi$. From \cite{Li3}, the quotient space  \[\CC/H=\CC/\te{span}\{R_h a-a\mid h\in H,\ a\in\CC\}\]
has a natural $(G/H,\bar{\chi})$-conformal algebra structure, where $\bar{\chi}$ is a linear character of $G/H$ induced by $\chi$. Then we prove that restricted $\widehat{\CC}_{\phi}[G]$-modules are exactly $(G/H,\bar\chi_\phi)$-equivariant
   $\phi$-coordinated quasi $V_{\CC/H}$-modules.
Finally, we apply these theories on  affine and Virasoro vertex algebras, and the vertex algebras associated
to Novikov algebras.

This paper is organized as follows. In Section 2, we recall $\phi$-operations on $\E(W)$ and establish some technical lemmas. In Section 3, we recall the notion of $\phi$-coordinated (quasi) module
for a vertex algebra and establish some conceptual results. In Section 4, we prove that
 restricted $\wh{\CC}_\phi$-modules are exactly $\phi$-coordinated $V_\CC$-modules.  In Section 5, we establish a
generalized commutator formula and a general construction of  $(G,\chi_\phi)$-equivariant $\phi$-coordinated quasi modules of vertex algebras. Furthermore,  we construct a class of Lie algebras $\wh{\CC}_\phi[G]$ and prove that restricted $\wh{\CC}_\phi[G]$-modules are exactly $(G/H,\bar\chi_\phi)$-equivariant $\phi$-coordinated quasi $V_{\CC/H}$-modules. In Section 6,  we determine a class of $(G,\chi_\phi)$-equivariant $\phi$-coordinated quasi modules for affine vertex algebras, and determine the $\phi$-coordinated modules for
Virasoro vertex algebra and the vertex algebras associated to Novikov algebras.

In this paper, we let $\Z$, $\Z^{\times}$, $\Z_+$, $\N$ and $\C$ be the sets of integers, nonzero integers, positive integers, nonnegative integers and  complex numbers,
respectively.  All the
Lie algebras in this paper are over the field of complex numbers.
For any linear endomorphism  $\varphi$ of a vector space,  we set $\varphi^{(k)}=\frac{\varphi^k}{k!}$.

\section{$\phi$-operations on $\E(W)$}
In this section, we recall the $\phi$-operations following \cite{Li2} and prove some technical Lemmas. We begin by recalling  the notion of an associate for the one-dimensional additive formal group (law)  $F(x,y)=x+y$.
Following \cite{Li2}, an associate $\phi(x,z)$ (or simply $\phi$) of $F(x,y)$  by definition is a formal series in $\C((x))[[z]]$ satisfying
$$\phi(x,0)=x,\quad  \phi(\phi(x,z_1),z_2)=\phi(x,z_1+z_2).$$
It was proved therein that  for every $p(x)\in \C((x))$, $\phi(x,z)=e^{zp(x)\frac{d}{dx}}(x)$ is an associate
of $F(x,y)$ and every associate is of this form with $p(x)$ uniquely determined.
Throughout this paper, we fix an associate
\begin{align}\label{defphi}
\phi=\phi(x,z)=e^{zp(x)\frac{d}{dx}}(x)\quad\te{with}\quad 0\ne p(x)\in \C((x)).\end{align}

Throughout this section let $W$ be a fixed vector space and set \[\E(W)=\Hom(W,W((z)))\subset(\End W)[[z,z^{-1}]].\]
A pair $(a(z),b(z))$ in $\E(W)$ is said to be {\em quasi local} if
there exists a nonzero polynomial $q(z_1,z_2)\in\C[z_1,z_2]$ such that
 \begin{align}\label{eq:quasiloal}
 q(z_1,z_2)\ [a(z_1),b(z_2)]=0,\end{align}
 and is said to be {\em local} if the polynomial $q(z_1,z_2)$ is assumed to a polynomial of the form $(z_1-z_2)^n$ for some $n\in \N$.
The main goal of this section is to establish a commutator formula for quasi local pairs in $\E(W)$  in terms of the following $\phi$-operations  introduced in \cite{Li2}:
\begin{dfnt}\label{de:yephi}
{\em Let $(a(z),b(z))$ be a quasi local pair in $\E(W)$ such that \eqref{eq:quasiloal} holds.
We define
\[Y_\E^\phi(a(z),z_0)b(z)=\sum\limits_{n\in\Z}a(z)_{n}^\phi b(z)z_0^{-n-1}\in\E(W)((z_0))\]
by the following rule:
\[Y_\E^\phi(a(z),z_0)b(z)=q(\phi(z,z_0),z)^{-1}(q(z_1,z)a(z_1)b(z))\mid_{z_1=\phi(z,z_0)}.\]}
\end{dfnt}

It was proved therein that the definition of $Y_\E^\phi$ dose not depend on the choice of $q(z_1,z_2)$.
In what follows, we establish   some technical lemmas.

\begin{lemt}\label{delta-1}
Let $j,k$ be two nonnegative integers with $j>k$. Then for any nonzero complex number $\lambda$, we have
\begin{align}\label{eq:lemadelta-1}
(z_1-\lambda z_2)^{j}\left(p(z_{2})\frac{\partial}{\partial z_{2}}\right)^{(k)}\delta\left(\frac{\lambda z_{2}}{z_{1}}\right)=0.\end{align}
\end{lemt}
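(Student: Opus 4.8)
The plan is to prove the identity by induction on the derivative order $k$, reducing everything to the single elementary fact that $(z_1-\lambda z_2)\delta(\lambda z_2/z_1)=0$. Writing $D=p(z_2)\frac{\partial}{\partial z_2}$, so that $\left(p(z_2)\frac{\partial}{\partial z_2}\right)^{(k)}=\frac{1}{k!}D^k$, I would establish the slightly stronger statement that $(z_1-\lambda z_2)^jD^{(k)}\delta(\lambda z_2/z_1)=0$ for every pair of nonnegative integers $j>k$ and every nonzero $\lambda\in\C$. The base case $k=0$ is immediate: here $D^{(0)}$ is the identity, and expanding $\delta(\lambda z_2/z_1)=\sum_{n\in\Z}\lambda^nz_2^nz_1^{-n}$ and reindexing gives $(z_1-\lambda z_2)\delta(\lambda z_2/z_1)=0$, hence $(z_1-\lambda z_2)^j\delta(\lambda z_2/z_1)=0$ for all $j\ge 1$.

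The engine of the induction is the commutation relation between $D$ and multiplication by $(z_1-\lambda z_2)^j$, which the Leibniz rule yields at once:
\begin{align*}
(z_1-\lambda z_2)^jD[g]=D\left[(z_1-\lambda z_2)^jg\right]+\lambda j\,p(z_2)(z_1-\lambda z_2)^{j-1}g,
\end{align*}
valid for any formal series $g$ in $z_1,z_2$. Assuming the claim for $k$ and fixing $j\ge k+2$, I would apply this with $g=D^{(k)}\delta(\lambda z_2/z_1)$ to the expression $(k+1)(z_1-\lambda z_2)^jD^{(k+1)}\delta=(z_1-\lambda z_2)^jD\big[D^{(k)}\delta\big]$. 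The right-hand side then splits into $D$ applied to $(z_1-\lambda z_2)^jD^{(k)}\delta$ plus the term $\lambda j\,p(z_2)(z_1-\lambda z_2)^{j-1}D^{(k)}\delta$. Since $j\ge k+2$ forces both $j>k$ and $j-1>k$, the inductive hypothesis annihilates $(z_1-\lambda z_2)^jD^{(k)}\delta$ and $(z_1-\lambda z_2)^{j-1}D^{(k)}\delta$ simultaneously, so both summands vanish and the claim follows for $k+1$.

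The one genuinely delicate point is that $D=p(z_2)\frac{\partial}{\partial z_2}$ carries a non-constant coefficient $p(z_2)\in\C((z_2))$ and therefore does not commute with multiplication by $(z_1-\lambda z_2)$; one cannot simply pull the polynomial factor across $D^{(k)}$ and invoke the classical constant-coefficient delta-function identities. What rescues the induction is that the commutator correction term lowers the exponent of $(z_1-\lambda z_2)$ by exactly one, and the strict inequality $j>k$ leaves precisely enough room---namely $j-1>k$---for the inductive hypothesis to dispatch that term as well. Alternatively, one could expand $D^k=\sum_{m=1}^k c_m(z_2)\frac{\partial^m}{\partial z_2^m}$ with $c_m\in\C((z_2))$ and apply the standard identity $(z_1-\lambda z_2)^{m+1}\frac{\partial^m}{\partial z_2^m}\delta(\lambda z_2/z_1)=0$ termwise, but the inductive argument is cleaner and avoids tracking the coefficients.
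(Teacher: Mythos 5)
Your proof is correct, but it takes a genuinely different route from the paper. The paper's argument is a two-line reduction: it expands the divided power $\left(p(z_2)\frac{\partial}{\partial z_2}\right)^{(k)}$ as a $\C((z_2))$-linear combination $\sum_{i=0}^{k} f_i(z_2)\left(\frac{\partial}{\partial z_2}\right)^{(i)}$ and then invokes, termwise, the classical constant-coefficient identity $(z_1-\lambda z_2)^{j}\left(\frac{\partial}{\partial z_2}\right)^{(i)}\delta\left(\frac{\lambda z_2}{z_1}\right)=0$ for $j>i$ --- in other words, the paper's proof is exactly the ``alternative'' you sketch and set aside in your final sentence. Your induction on $k$, by contrast, needs only the single elementary fact $(z_1-\lambda z_2)\delta\left(\frac{\lambda z_2}{z_1}\right)=0$ as input: the Leibniz-rule commutator
\begin{equation*}
(z_1-\lambda z_2)^{j}D[g]=D\left[(z_1-\lambda z_2)^{j}g\right]+\lambda j\,p(z_2)(z_1-\lambda z_2)^{j-1}g
\end{equation*}
produces a correction term whose exponent drops by exactly one, and your observation that $j\ge k+2$ leaves room for both $j>k$ and $j-1>k$ is precisely what makes the inductive step close. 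What your approach buys is self-containedness (no appeal to the higher-derivative delta identities, which themselves require a proof); what the paper's buys is brevity, since those identities are standard in the vertex-algebra literature. Both arguments are sound, and your identification of the non-commutativity of $p(z_2)\frac{\partial}{\partial z_2}$ with multiplication by $(z_1-\lambda z_2)$ as the one delicate point is exactly right.
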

\begin{proof}
Note that we may write
$$\left(p(z_{2})\frac{\partial}{\partial z_{2}}\right)^{(k)}=f_k(z_2)\left(\frac{\partial}{\partial z_{2}}\right)^{(k)}+
\cdots+f_{1}(z_{2})\frac{\partial}{\partial z_{2}}+f_0(z_2)$$
for some uniquely determined $f_0(z_2),\cdots,f_k(z_2)\in \C((z_2))$.
Then the assertion follows from the fact that $(z_1-\lambda z_2)^{j}\left(\frac{\partial}{\partial z_{2}}\right)^{(i)}\delta\left(\frac{\lambda z_{2}}{z_{1}}\right)=0$
for  $i=0,\cdots,k$.
\end{proof}

For convenience, we set
\[\bar{p}(x)=x^{-1}p(x)\in \C((x)).\]

\begin{lemt}\label{delta-2} Let $\lambda_1,\cdots,\lambda_r$ be distinct nonzero complex numbers, $k_1,\cdots,k_r\in \Z_+$ and
$A_{ij}(z)\in \E(W)$, where $i=1,\cdots,r,$ and for a fixed $i$,
$ 0\le j\le k_i-1$.
   Then
\begin{eqnarray}\label{eq:lemma220}
\sum _{i=1}^r\sum_{j=0}^{k_i-1} A_{ij}(z_{2})\left(p(z_{2})\frac{\partial}{\partial z_{2}}\right)^{(j)}\bar{p}(\lambda_i^{-1}z_1)\delta\left(\frac{\lambda_iz_{2}}{z_{1}}\right)=0
\end{eqnarray}
if and only if $A_{ij}(z)=0$ for all $i,j$.
\end{lemt}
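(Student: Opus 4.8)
The forward implication is immediate, so the content lies in the converse: assuming the displayed sum vanishes, we must deduce $A_{ij}(z)=0$ for all $i,j$. The plan is to isolate one coefficient at a time by combining Lemma~\ref{delta-1} with a residue extraction in $z_1$. Since $\bar p(\lambda_i^{-1}z_1)$ is independent of $z_2$, it commutes past $p(z_2)\frac{\partial}{\partial z_2}$, so each summand equals $A_{ij}(z_2)\,\bar p(\lambda_i^{-1}z_1)\left(p(z_2)\frac{\partial}{\partial z_2}\right)^{(j)}\delta\left(\frac{\lambda_i z_2}{z_1}\right)$.

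Fix an index $i_0$ and argue by downward induction on $j_0$, from $j_0=k_{i_0}-1$ down to $0$, that $A_{i_0 j_0}(z)=0$; the inductions for distinct $i_0$ are independent, since in each the other families are annihilated regardless of their coefficients. At stage $j_0$ I multiply the vanishing sum by
\[
Q(z_1,z_2)=\prod_{i\ne i_0}(z_1-\lambda_i z_2)^{k_i}\,(z_1-\lambda_{i_0}z_2)^{j_0}.
\]
By Lemma~\ref{delta-1}, every factor $(z_1-\lambda_i z_2)^{k_i}$ with $i\ne i_0$ kills all terms carrying $\delta\left(\frac{\lambda_i z_2}{z_1}\right)$ (there $k_i>j$ for $0\le j\le k_i-1$); the factor $(z_1-\lambda_{i_0}z_2)^{j_0}$ kills the terms with $i=i_0$ and $j<j_0$; and the terms with $i=i_0$, $j>j_0$ already vanish by the induction hypothesis. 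Hence only the single term $i=i_0$, $j=j_0$ survives. This three-fold bookkeeping is the one genuine obstacle, and it is exactly what the choice of $Q$ is designed to resolve.

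For that surviving term I expand $\left(p(z_2)\frac{\partial}{\partial z_2}\right)^{(j_0)}=p(z_2)^{j_0}\left(\frac{\partial}{\partial z_2}\right)^{(j_0)}+\sum_{m<j_0}f_m(z_2)\left(\frac{\partial}{\partial z_2}\right)^{(m)}$ as in the proof of Lemma~\ref{delta-1}; after multiplication by $(z_1-\lambda_{i_0}z_2)^{j_0}$ the lower-order pieces die by Lemma~\ref{delta-1}, while a direct computation (verified by induction on $j_0$ using $(z_1-\lambda z_2)\frac{\partial}{\partial z_2}\delta\left(\frac{\lambda z_2}{z_1}\right)=\lambda\,\delta\left(\frac{\lambda z_2}{z_1}\right)$) gives $(z_1-\lambda_{i_0}z_2)^{j_0}\left(\frac{\partial}{\partial z_2}\right)^{(j_0)}\delta\left(\frac{\lambda_{i_0} z_2}{z_1}\right)=\lambda_{i_0}^{j_0}\,\delta\left(\frac{\lambda_{i_0} z_2}{z_1}\right)$. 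The relation thus reduces to
\[
\prod_{i\ne i_0}(z_1-\lambda_i z_2)^{k_i}\,\lambda_{i_0}^{j_0}\,p(z_2)^{j_0}\,\bar p(\lambda_{i_0}^{-1}z_1)\,A_{i_0 j_0}(z_2)\,\delta\left(\tfrac{\lambda_{i_0} z_2}{z_1}\right)=0.
\]
Applying $\Res_{z_1}$ and using $\Res_{z_1}F(z_1,z_2)\delta\left(\frac{\lambda_{i_0} z_2}{z_1}\right)=\lambda_{i_0}z_2\,F(\lambda_{i_0}z_2,z_2)$ substitutes $z_1=\lambda_{i_0}z_2$, turning $\bar p(\lambda_{i_0}^{-1}z_1)$ into $\bar p(z_2)=z_2^{-1}p(z_2)$ and each $(z_1-\lambda_i z_2)$ into $(\lambda_{i_0}-\lambda_i)z_2$. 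This yields $c(z_2)\,A_{i_0 j_0}(z_2)=0$ for the scalar $c(z_2)=\lambda_{i_0}^{\,j_0+1}\prod_{i\ne i_0}(\lambda_{i_0}-\lambda_i)^{k_i}\,z_2^{\sum_{i\ne i_0}k_i}\,p(z_2)^{j_0+1}\in\C((z_2))$, which is nonzero because $\lambda_{i_0}\ne 0$, the $\lambda_i$ are distinct, and $p(x)\ne 0$. As $\C((z_2))$ is a field acting on $W((z_2))$, we get $A_{i_0 j_0}(z_2)=0$, completing the induction and hence the proof.
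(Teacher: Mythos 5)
Your proof is correct, and it takes a genuinely different route from the paper's in both halves of the argument. To isolate terms, the paper works with the one-variable polynomials $f_i(z)=\prod_{j\ne i}(z-\lambda_j)^{k_j}$ and a Bezout identity $q_1f_1+\cdots+q_rf_r=1$: multiplying \eqref{eq:lemma220} by $f_s(z_1/z_2)q_s(z_1/z_2)$ kills the families $i\ne s$ via Lemma \ref{delta-1} while, thanks to the Bezout identity, reproducing the $s$-th family \emph{exactly}, so the whole sub-sum over $j$ for fixed $s$ is shown to vanish at once; you instead kill everything except the single term $(i_0,j_0)$ by a downward induction on $j_0$ combined with the extra factor $(z_1-\lambda_{i_0}z_2)^{j_0}$, which dispenses with Bezout at the cost of an induction. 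To finish, the paper rescales $z_1$, re-expands each $A_{sj}(z_2)\left(p(z_2)\frac{\partial}{\partial z_2}\right)^{(j)}$ in ordinary divided derivatives, and invokes \cite[Lemma 2.1.4]{Li1} together with the triangularity of that change of operators; you instead evaluate the surviving term explicitly, using the identity $(z_1-\lambda z_2)^{j}\left(\frac{\partial}{\partial z_2}\right)^{(j)}\delta\left(\frac{\lambda z_2}{z_1}\right)=\lambda^{j}\delta\left(\frac{\lambda z_2}{z_1}\right)$ (which is correct, and verified by the computation you indicate) followed by $\Res_{z_1}$, reducing the relation to $c(z_2)A_{i_0j_0}(z_2)=0$ with $0\ne c(z_2)\in\C((z_2))$, whence $A_{i_0j_0}(z)=0$ since $W((z_2))$ is a vector space over the field $\C((z_2))$. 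Your route is self-contained (no appeal to \cite{Li1} and no Bezout cofactors), while the paper's disposes of all $j$ in a family simultaneously by leaning on a known lemma; both arguments rest on Lemma \ref{delta-1} to annihilate cross terms. One cosmetic point: when you kill the lower-order pieces $\left(\frac{\partial}{\partial z_2}\right)^{(m)}$, $m<j_0$, you cite Lemma \ref{delta-1}, which is stated for the operators $\left(p(z_2)\frac{\partial}{\partial z_2}\right)^{(k)}$; what you actually use is the standard fact $(z_1-\lambda z_2)^{j}\left(\frac{\partial}{\partial z_2}\right)^{(m)}\delta\left(\frac{\lambda z_2}{z_1}\right)=0$ for $j>m$, which is precisely the fact quoted in the paper's proof of that lemma, so nothing is missing.
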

\begin{proof} For $1\le i\le r$, set
\[f_i(z)=\frac{\prod_{j=1}^r (z-\lambda_j)^{k_j}}{(z-\lambda_i)^{k_i}}\in\C[z].\]
In view of Lemma \ref{delta-1}, for $1\le i\ne s\le r$ and $0\le j\le k_s-1$ we have
\begin{align}\label{eq:lemma221}
f_i(z_1/z_2)\left(p(z_{2})\frac{\partial}{\partial z_{2}}\right)^{(j)}\bar{p}(\lambda_s^{-1}z_1)\delta\left(\frac{\lambda_sz_{2}}{z_{1}}\right)=0.
\end{align}
As $f_1(z),\cdots,f_r(z)$ are pairwise relatively prime, we may take suitable polynomials $q_1(z),\cdots,q_r(z)\in \C[z]$ such that
\[q_1(z)f_1(z)+\cdots+q_r(z)f_r(z)=1.\]
This together with \eqref{eq:lemma221} gives that for $1\le s\le r$, $j=0,\cdots,k_s-1$,
\begin{equation}\label{eq:lemma222}
\begin{split}
&A_{sj}(z_{2})\left(p(z_{2})\frac{\partial}{\partial z_{2}}\right)^{(j)}\bar{p}(\lambda_s^{-1}z_1)\delta\left(\frac{\lambda_sz_{2}}{z_{1}}\right)\\
=\,&\(\sum_{i=1}^r f_i(z_1/z_2) q_i(z_1/z_2)\)A_{sj}(z_{2})\left(p(z_{2})\frac{\partial}{\partial z_{2}}\right)^{(j)}\bar{p}(\lambda_s^{-1}z_1)\delta\left(\frac{\lambda_sz_{2}}{z_{1}}\right)\\
=\,&A_{sj}(z_{2})f_s(z_1/z_2)q_s(z_1/z_2)\left(p(z_{2})\frac{\partial}{\partial z_{2}}\right)^{(j)}
\bar{p}(\lambda_s^{-1}z_1)\delta\left(\frac{\lambda_sz_{2}}{z_{1}}\right).
\end{split}
\end{equation}

Now we assume that \eqref{eq:lemma220} holds and fix $s$ with $1\le s\le r$.
Then from \eqref{eq:lemma221} and \eqref{eq:lemma222} we have
 \begin{align*}
0=&\sum _{i=1}^r\sum_{j=0}^{k_i-1} A_{ij}(z_{2})f_s(z_1/z_2)q_s(z_1/z_2)\left(p(z_{2})\frac{\partial}{\partial z_{2}}\right)^{(j)}\bar{p}(\lambda_i^{-1}z_1)\delta\left(\frac{\lambda_iz_{2}}{z_{1}}\right)\\
= &\sum_{j=0}^{k_s-1} A_{sj}(z_{2})f_s(z_1/z_2)q_s(z_1/z_2)\left(p(z_{2})\frac{\partial}{\partial z_{2}}\right)^{(j)}\bar{p}(\lambda_s^{-1}z_1)\delta\left(\frac{\lambda_sz_{2}}{z_{1}}\right)\\
=&\sum_{j=0}^{k_s-1} A_{sj}(z_{2})\left(p(z_{2})\frac{\partial}{\partial z_{2}}\right)^{(j)}\bar{p}(\lambda_s^{-1}z_1)\delta\left(\frac{\lambda_sz_{2}}{z_{1}}\right),
\end{align*}
which amounts to
 \begin{eqnarray}\label{eq:lemma223}
 \sum_{j=0}^{k_s-1} A_{sj}(z_{2})\left(p(z_{2})\frac{\partial}{\partial z_{2}}\right)^{(j)}p(z_1)z_1^{-1}\delta\left(\frac{z_{2}}{z_{1}}\right)=0.
\end{eqnarray}
For $j=0,\cdots,k_s-1$, write
\begin{align*}
A_{sj}(z_{2})\left(p(z_{2})\frac{\partial}{\partial z_{2}}\right)^{(j)}
=\sum_{n=0}^j g_{jn}(z_2)\left(\frac{\partial}{\partial z_{2}}\right)^{(n)}
\end{align*}
for some uniquely determined $g_{jn}(z_2)\in \C((z_2))$.
In view of \eqref{eq:lemma223} we have
$$\sum_{n=0}^{k_s-1} \(\sum_{j=n}^{k_s-1} g_{jn}(z_2)\)\left(\frac{\partial}{\partial z_{2}}\right)^{(n)}z_1^{-1}\delta\left(\frac{z_{2}}{z_{1}}\right)=0.
$$
 By \cite[Lemma 2.1.4]{Li1}, this implies $\sum_{j=n}^{k_s-1} g_{jn}(z_2)=0$ for $0\le n\le k_s-1$,
 and hence implies $A_{sj}(z)=0$ for $0\le j\le k_s-1$, as desired.
\end{proof}

The following result is a slight generalization of  \cite[Lemma 2.3]{Li4}.
\begin{lemt}\label{lem:ab-k} Let $a(z),b(z)\in \E(W)$ and let
\begin{align}\label{eq:qz}
q(z)=(z-\lambda_1)^{k_1}\cdots(z-\lambda_r)^{k_r}\in\C[z],
\end{align} where
$\lambda_1,\cdots,\lambda_r\in \C^\times$ are distinct and  $k_1,\cdots,k_r\in \Z_+$.
Then
\begin{align}\label{eq:lemma230}
q(z_1/z_2)\,[a(z_1),b(z_2)]=0\end{align}
if and only if
\begin{equation}\label{eq:lemma231}\begin{split}
[a(z_1), b(z_2)]=\sum _{i=1}^r\sum_{j=0}^{k_i-1} A_{ij}(z_{2})\left(p(z_{2})\frac{\partial}{\partial z_{2}}\right)^{(j)}\bar{p}(\lambda_i^{-1}z_1)\delta\left(\frac{\lambda_iz_{2}}{z_{1}}\right)
\end{split}\end{equation}
for some uniquely determined $A_{ij}(z)\in\E(W)$.
\end{lemt}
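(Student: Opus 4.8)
The plan is to establish the three parts of the statement separately—uniqueness of the $A_{ij}$, the implication \eqref{eq:lemma231}$\Rightarrow$\eqref{eq:lemma230}, and its converse—the converse being the substantial one. Uniqueness is immediate: if $[a(z_1),b(z_2)]$ admitted two expansions of the form \eqref{eq:lemma231}, their difference would be an identity to which Lemma \ref{delta-2} applies, forcing all coefficients to coincide. For \eqref{eq:lemma231}$\Rightarrow$\eqref{eq:lemma230} I would multiply the right-hand side of \eqref{eq:lemma231} by $q(z_1/z_2)$ and treat each summand: since $\bar p(\lambda_i^{-1}z_1)$ is free of $z_2$, it can be pulled out of the operator $(p(z_2)\partial_{z_2})^{(j)}$, and since $q(z_1/z_2)$ carries the factor $(z_1-\lambda_iz_2)^{k_i}$, the $(i,j)$-summand is a multiple of $(z_1-\lambda_iz_2)^{k_i}(p(z_2)\partial_{z_2})^{(j)}\delta(\lambda_iz_2/z_1)$, which vanishes by Lemma \ref{delta-1} since $j\le k_i-1<k_i$.

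The converse \eqref{eq:lemma230}$\Rightarrow$\eqref{eq:lemma231} is the heart of the matter. Put $X(z_1,z_2)=[a(z_1),b(z_2)]$; multiplying \eqref{eq:lemma230} by the invertible series $z_2^{\deg q}$ rewrites it as $\prod_i(z_1-\lambda_iz_2)^{k_i}X=0$. Set $g(z_1,z_2)=\prod_i(z_1-\lambda_iz_2)^{k_i}a(z_1)b(z_2)$; by hypothesis this also equals $\prod_i(z_1-\lambda_iz_2)^{k_i}b(z_2)a(z_1)$, so $g$ lies in $\Hom(W,W((z_1))((z_2)))\cap\Hom(W,W((z_2))((z_1)))$, which by a standard fact consists of series with globally bounded-below powers in each variable; in particular the substitutions $z_1=\lambda_iz_2$ into $g$ and into its $z_1$-derivatives are well defined and land in $\E(W)$. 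Writing $\iota_{z_1,z_2}$ and $\iota_{z_2,z_1}$ for the two expansions of a rational expression, one has $X=(\iota_{z_1,z_2}-\iota_{z_2,z_1})\frac{g}{\prod_i(z_1-\lambda_iz_2)^{k_i}}$. Applying partial fractions to $\prod_i(z_1-\lambda_iz_2)^{-k_i}$, Taylor-expanding $g$ about each pole $z_1=\lambda_iz_2$, and invoking the elementary identity $(\iota_{z_1,z_2}-\iota_{z_2,z_1})(z_1-\lambda z_2)^{-m}=\tfrac{1}{\lambda^{m-1}(m-1)!}\partial_{z_2}^{m-1}(z_1^{-1}\delta(\lambda z_2/z_1))$ then expresses $X$ as a finite sum $\sum_i\sum_{n=0}^{k_i-1}C_{in}(z_2)\partial_{z_2}^{(n)}(z_1^{-1}\delta(\lambda_iz_2/z_1))$ with $C_{in}(z)\in\E(W)$.

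It remains to pass from this $\partial_{z_2}$-normalized expansion to the form \eqref{eq:lemma231}. Using the substitution property of the delta function one has $\bar p(\lambda_i^{-1}z_1)\delta(\lambda_iz_2/z_1)=\lambda_ip(z_2)z_1^{-1}\delta(\lambda_iz_2/z_1)$, and applying $(p(z_2)\partial_{z_2})^{(j)}$ and expanding shows that, for each fixed $i$, the family $\{(p(z_2)\partial_{z_2})^{(j)}\bar p(\lambda_i^{-1}z_1)\delta(\lambda_iz_2/z_1)\}_{j=0}^{k_i-1}$ is an upper-triangular $\C((z_2))$-combination of $\{\partial_{z_2}^{(n)}(z_1^{-1}\delta(\lambda_iz_2/z_1))\}_{n=0}^{k_i-1}$ whose diagonal entries are nonzero multiples of powers of the invertible series $p(z_2)$. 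Inverting this triangular system solves for the $A_{ij}(z)\in\E(W)$ and yields \eqref{eq:lemma231}. The main obstacle is precisely the converse, and within it the rational-function picture: verifying that $g$ lies in the stated intersection and that the partial-fraction and Taylor/substitution manipulations are valid operations on formal series. Once this is in place, the reduction to delta functions and the triangular change of basis to the $(p(z_2)\partial_{z_2})$-with-$\bar p$ normalization are routine.
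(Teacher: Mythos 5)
Your proof is correct and follows essentially the same route as the paper's (sketched) proof: uniqueness via Lemma \ref{delta-2}, the easy direction via Lemma \ref{delta-1}, and for the main direction a partial-fraction decomposition of $1/q$ combined with delta-function calculus to expand the commutator, followed by a conversion from the $\partial_{z_2}$-normalized expansion to the $\left(p(z_2)\frac{\partial}{\partial z_2}\right)^{(j)}\bar{p}$ form. The only difference is presentational: where the paper cites \cite[Lemma 2.3]{Li4} and a single induction to absorb the two-variable coefficient $A(z_1,z_2)$ and renormalize the derivatives in one step, you separate this into a Taylor expansion of $g$ at each pole plus an explicit triangular change of basis over $\C((z_2))$ --- the same computation, organized differently.
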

\begin{proof}The proof  is similar to that of  \cite[Lemma 2.3]{Li4} and we give a sketch of it.
The uniqueness of $A_{ij}(z)$ follows from Lemma \ref{delta-2}.
For the existence,  write
\[\frac{1}{q(x)}=\sum _{i=1}^r\sum_{j=1}^{k_i}\frac{a_{ij}}{(x-\lambda_i)^j}\]
for some $a_{ij}\in\C$.
Then from \eqref{eq:lemma230} we obtain (cf. \cite[Lemma 2.3]{Li4})
\begin{equation}\label{eq:lemma232}\begin{split}
[a(z_1), b(z_2)]=\sum _{i=1}^r\sum_{j=0}^{k_i-1} a_{i,j+1}\lambda_i^{-j}z_2^{j+1}A(z_1,z_2)\left(\frac{\partial}{\partial z_{2}}\right)^{(j)}z_1^{-1}\delta\left(\frac{\lambda_iz_{2}}{z_{1}}\right),
\end{split}\end{equation}
where  $A(z_1,z_2)=q(z_1/z_2)a(z_1)b(z_2)\in \Hom(W,W((z_1,z_2)))$.

Note that for any $n\in \N$, there exist $f_0(z_2),\cdots,f_{n}(z_2)\in\C((z_2))$ such that
\begin{equation*}
\left(\frac{\partial}{\partial z_2}\right)^{(n)}
=f_n(z_2)\left(p(z_2)\frac{\partial}{\partial z_2}\right)^{(n)}+f_{n-1}(z_2)\left(p(z_2)\frac{\partial}{\partial z_2}\right)^{(n-1)}+\cdots+f_{0}(z_2).
\end{equation*}
An induction argument shows that for any $B(z_1,z_2)\in\Hom(W,W((z_1,z_2)))$ and $n\in \N$,
there exist $B_0(z),\cdots,B_n(z)\in\E(W)$ such that
\begin{equation*}\begin{split}
&B(z_1,z_2)\left(\frac{\partial}{\partial z_{2}}\right)^{(n)}z_1^{-1}\delta\left(\frac{\lambda_iz_{2}}{z_{1}}\right)=\sum_{i=0}^nB_i(z_2)\left(\frac{\partial}{\partial z_{2}}\right)^{(i)}\bar{p}(\lambda_i^{-1}z_1)\delta\left(\frac{\lambda_iz_{2}}{z_{1}}\right).
\end{split}\end{equation*}
Thus \eqref{eq:lemma231} follows.
\end{proof}

We fix a $\hat{p}(x)\in \C((x))$ such that
 $\frac{d}{dx}\hat{p}(x)=p^{-1}(x)$, and set
\[f_{p(x)}(x,z):=\hat{p}(x(1+z))-\hat{p}(x).\]
Then we can deduce that
\begin{align}\label{hatphi}
\phi(x,f_{p(x)}(x,z))=x(1+z).
\end{align}
We have (cf.\,\cite{BLP,Li2}):
\begin{lemt}\label{lem:formal-jocobi} Let
\begin{eqnarray*}
&&A(z_1,z_2)\in\Hom\big(W,W((z_1))((z_2))\big),\quad B(z_1,z_2)\in\Hom\big(W,W((z_2))((z_1))\big),\\
&&C(z_0,z_2)\in\big(\Hom\big(W,W((z_2))\big)\big)((z_0)).
\end{eqnarray*}
If there exists a nonnegative integer $k$ such that
\begin{eqnarray*}
&&(z_1-z_2)^kA(z_1,z_2)=(z_1-z_2)^k B(z_1,z_2),\\
&&\left((z_1-z_2)^kA(z_1,z_2)\right)|_{z_1=\phi(z_2,z_0)}=(\phi(z_2,z_0)-z_2)^kC(z_0,z_2),
\end{eqnarray*}
then
\begin{equation*}\begin{split}
&(z_{2}z)^{-1}\delta\left(\frac{z_{1}-z_{2}}{z_{2}z}\right)A(z_1,z_2)-(z_{2}z)^{-1}\delta\left(\frac{z_{2}-z_{1}}{-z_{2}z}\right)B(z_1,z_2)\\
&\quad\quad=z_{1}^{-1}\delta\left(\frac{z_{2}(1+z)}{z_{1}}\right)C\left(f_{p(z_2)}(z_2,z),z_2\right).
\end{split}\end{equation*}
\end{lemt}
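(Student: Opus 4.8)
The plan is to reduce this $\phi$-coordinated formal Jacobi identity to the classical three-term delta-function identity, following the formal-calculus arguments of \cite{Li2,BLP}; the only genuinely new input is the change of variables \eqref{hatphi}. Write $D(z_1,z_2)=(z_1-z_2)^kA(z_1,z_2)=(z_1-z_2)^kB(z_1,z_2)$, which by the first hypothesis lies in $\Hom(W,W((z_1,z_2)))$. The two ingredients I would rely on are the elementary identity
\[(z_2z)^{-1}\delta\left(\frac{z_1-z_2}{z_2z}\right)-(z_2z)^{-1}\delta\left(\frac{z_2-z_1}{-z_2z}\right)=z_1^{-1}\delta\left(\frac{z_2(1+z)}{z_1}\right),\]
which is the usual three-term delta identity specialized by $x_0=z_2z$, $x_1=z_1$, $x_2=z_2$ (so that $x_2+x_0=z_2(1+z)$), together with the fact that a formal delta function absorbs any power series in the quantity appearing in its numerator.

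First I would treat the two terms on the left-hand side separately. Since $A\in\Hom(W,W((z_1))((z_2)))$, the factor $(z_1-z_2)^{-k}$ in $A=(z_1-z_2)^{-k}D$ is expanded in nonnegative powers of $z_2$, matching the expansion of the first delta function; as $(z_1-z_2)^{-k}$ depends only on $z_1-z_2$, the delta absorbs it and a reindexing gives
\[(z_2z)^{-1}\delta\left(\frac{z_1-z_2}{z_2z}\right)A(z_1,z_2)=(z_2z)^{-k}\,(z_2z)^{-1}\delta\left(\frac{z_1-z_2}{z_2z}\right)D(z_1,z_2).\]
The $B$-term, with $(z_1-z_2)^{-k}$ now expanded in nonnegative powers of $z_1$, contributes the same factor $(z_2z)^{-k}$. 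Subtracting and applying the three-term delta identity collapses the left-hand side to
\[(z_2z)^{-k}\,z_1^{-1}\delta\left(\frac{z_2(1+z)}{z_1}\right)D(z_1,z_2).\]
Next I would use the substitution property of the delta function to replace $D(z_1,z_2)$ by $D(z_2(1+z),z_2)$ under $z_1^{-1}\delta\left(\frac{z_2(1+z)}{z_1}\right)$. Finally, setting $z_0=f_{p(z_2)}(z_2,z)$ and using \eqref{hatphi} to get $\phi(z_2,z_0)=z_2(1+z)$ and $\phi(z_2,z_0)-z_2=z_2z$, the second hypothesis becomes $D(z_2(1+z),z_2)=(z_2z)^k\,C(f_{p(z_2)}(z_2,z),z_2)$; substituting this cancels $(z_2z)^{-k}$ and produces exactly the right-hand side.

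The content of the proof is entirely in checking that the formal substitutions are well defined. The delicate step is the replacement of $D(z_1,z_2)$ by $D(z_2(1+z),z_2)$: this is legitimate precisely because $D\in\Hom(W,W((z_1,z_2)))$, so that for each vector only finitely many negative powers of $z_1$ and $z_2$ occur and the substitution $z_1\mapsto z_2(1+z)=z_2+z_2z$ can be expanded in nonnegative powers of $z$ with finite coefficients. By contrast one cannot substitute into $A$ or $B$ directly, which is exactly why I pass to $D$ first. The second point to verify is that the two descriptions of the substitution locus agree, namely that ``$z_1=\phi(z_2,z_0)$'' of the hypothesis matches ``$z_1=z_2(1+z)$ with $z_0=f_{p(z_2)}(z_2,z)$''; this is guaranteed by \eqref{hatphi}, and the composition $C(f_{p(z_2)}(z_2,z),z_2)$ makes sense because $C(z_0,z_2)$ has only finitely many negative powers of $z_0$ while $f_{p(z_2)}(z_2,z)$ has zero constant term and invertible linear term in $z$. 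I expect this bookkeeping of expansion conventions, rather than any algebraic difficulty, to be the main obstacle.
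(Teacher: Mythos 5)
Your proposal is correct and matches the paper's approach: the paper proves this lemma simply by citing \cite{BLP} (Lemma 3.16 there, the case $p(x)=x^{\epsilon}$) and observing that, because of \eqref{hatphi}, that proof carries over verbatim to general $p(x)$. Your argument --- passing to $D=(z_1-z_2)^kA=(z_1-z_2)^kB\in\Hom(W,W((z_1,z_2)))$, absorbing the expansions of $(z_1-z_2)^{-k}$ into $(z_2z)^{-k}$ via the two delta functions, applying the classical three-term delta identity, and then using \eqref{hatphi} to convert the hypothesis at $z_1=\phi(z_2,z_0)$ into the statement at $z_1=z_2(1+z)$, $z_0=f_{p(z_2)}(z_2,z)$ --- is exactly the argument the paper outsources to that citation, written out in full with the correct attention to where each substitution is legitimate.
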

\begin{proof}
When $p(x)=x^\epsilon$ for some $\epsilon\in \Z$, the assertion
is proved in \cite[Lemma 3.16]{BLP}, where $f_{p(x)}(x,z)$ is denoted as $f_\epsilon(x,z)$.
In view of \eqref{hatphi}, the proof therein also valid for a general $p(x)$.
\end{proof}

Now we are in a position to state the main result of this section, which says that those $A_{ij}(z)$ in \eqref{eq:lemma231} are nothing but
the $\phi$-products $a(\lambda_iz)_j^\phi b(z)$.

\begin{prpt}\label{prop:quasilocalpair} Let $(a(z),b(z))$ be a pair in $\E(W)$ satisfying
\begin{align*} q(z_1/z_2)[a(z_1),b(z_2)]=0
\end{align*}
for some $q(z)\in \C[z]$.
Then
\begin{eqnarray*}
[a(z_1),b(z_2)]=\sum_{\lambda\in \C^\times}\sum_{j\ge 0}a(\lambda z_2)_{j}^{\phi}b(z_2)
\left(p(z_{2})\frac{\partial}{\partial z_{2}}\right)^{(j)}\bar{p}(\lambda^{-1}z_1)\delta\left(\frac{\lambda z_{2}}{z_{1}}\right),
\end{eqnarray*}
which is a finite sum. Furthermore, the coefficients  $a(\lambda z_2)_{j}^{\phi}b(z_2)\ne 0$ only if $\lambda$ is a root
of $q(z)$ and $j$ is less than the multiplicity of $\lambda$.
\end{prpt}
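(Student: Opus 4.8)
The plan is to normalize $q$ and then identify the coefficients produced by Lemma~\ref{lem:ab-k} with the $\phi$-products. Since multiplication by the monomial $z_1^mz_2^{-m}$ is a bijection of $(\End W)[[z_1^{\pm1},z_2^{\pm1}]]$, the hypothesis $q(z_1/z_2)[a(z_1),b(z_2)]=0$ is unaffected if the factor $z^m$ is deleted from $q$; thus I may assume $q(z)=(z-\lambda_1)^{k_1}\cdots(z-\lambda_r)^{k_r}$ with distinct $\lambda_i\in\C^\times$ and $k_i\in\Z_+$. Lemma~\ref{lem:ab-k} then yields the expansion \eqref{eq:lemma231} with uniquely determined $A_{ij}(z)\in\E(W)$ for $1\le i\le r$, $0\le j\le k_i-1$. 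Consequently the proposition reduces to two claims: (i) $A_{ij}(z)=a(\lambda_iz)^\phi_jb(z)$ for all $i,j$ in range; and (ii) $a(\lambda z)^\phi_jb(z)=0$ whenever $\lambda$ is not a root of $q$, or $\lambda=\lambda_i$ and $j\ge k_i$. Granting (i) and (ii), the sum over $\lambda\in\C^\times$ collapses to the finite sum \eqref{eq:lemma231} (which equals the commutator), and the stated vanishing is precisely (ii).

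Claim (ii) I would settle by a pole–order count in Definition~\ref{de:yephi}. The rescaled pair $(a(\lambda z),b(z))$ is quasi local with $\tilde q_\lambda(z_1,z_2)=\prod_i(\lambda z_1-\lambda_iz_2)^{k_i}$, so
\[Y_\E^\phi(a(\lambda z),z_0)b(z)=\tilde q_\lambda(\phi(z,z_0),z)^{-1}\bigl(\tilde q_\lambda(z_1,z)a(\lambda z_1)b(z)\bigr)\big|_{z_1=\phi(z,z_0)}.\]
The bracketed factor lies in $\E(W)[[z_0]]$, so the principal part in $z_0$ — which is exactly the collection $\{a(\lambda z)^\phi_jb(z):j\ge0\}$ of coefficients of $z_0^{-j-1}$ — arises only from the pole at $z_0=0$ of $\tilde q_\lambda(\phi(z,z_0),z)^{-1}$. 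Since $\phi(z,0)=z$, the order of this pole equals the order of vanishing of $\tilde q_\lambda(z_1,z)$ at $z_1=z$, namely $k_i$ if $\lambda=\lambda_i$ and $0$ otherwise. Hence there is no principal part when $\lambda\notin\{\lambda_1,\dots,\lambda_r\}$, and at most a pole of order $k_i$ when $\lambda=\lambda_i$, which gives (ii).

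For (i) I would fix a root $\lambda_s$ and use the factorization $\tilde q_s(z_1,z_2)=\lambda_s^{k_s}(z_1-z_2)^{k_s}h_s(z_1,z_2)$, where $h_s(z_1,z_2)=\prod_{i\ne s}(\lambda_sz_1-\lambda_iz_2)^{k_i}$ and $h_s(z_2,z_2)$ is a nonzero monomial. Setting $A=h_s\,a(\lambda_sz_1)b(z_2)$ and $B=h_s\,b(z_2)a(\lambda_sz_1)$, the relation $\tilde q_s[a(\lambda_sz_1),b(z_2)]=0$ gives $(z_1-z_2)^{k_s}A=(z_1-z_2)^{k_s}B$, so Lemma~\ref{lem:formal-jocobi} applies with $k=k_s$; a short computation using \eqref{hatphi} shows the required $C$ is $C(z_0,z)=h_s(\phi(z,z_0),z)\,Y_\E^\phi(a(\lambda_sz),z_0)b(z)$. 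Taking $\Res_z$ of the resulting identity turns the left side into $z_2^{-1}h_s(z_1,z_2)[a(\lambda_sz_1),b(z_2)]$, and the right side, after the substitution $z_0=f_{p(z_2)}(z_2,z)$ and $\phi(z_2,f_{p(z_2)}(z_2,z))=z_2(1+z)$, into $z_2^{-1}h_s(z_1,z_2)$ times $\sum_{j}a(\lambda_sz_2)^\phi_jb(z_2)\left(p(z_2)\frac{\partial}{\partial z_2}\right)^{(j)}\bar p(z_1)\delta(z_2/z_1)$. Because the zeros of $h_s$ kill every singularity of the commutator off $z_1=z_2$ (Lemma~\ref{delta-1}), both sides are supported on $z_1=z_2$, where multiplication by $h_s$ is invertible (its value $h_s(z_2,z_2)$ is a unit); cancelling $h_s$ and then rescaling $z_1\mapsto\lambda_s^{-1}z_1$ lands the identity on the $\lambda_s$-summand of \eqref{eq:lemma231}. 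Comparing with \eqref{eq:lemma231} and invoking the uniqueness in Lemma~\ref{delta-2} gives $A_{sj}(z)=a(\lambda_sz)^\phi_jb(z)$, as required.

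The hard part will be the residue computation in the last step: extracting $\Res_z$ of the right-hand side of Lemma~\ref{lem:formal-jocobi} and showing it reorganizes, through the change of variable $z_0=f_{p(z_2)}(z_2,z)$ and the expansion of $z_1^{-1}\delta(z_2(1+z)/z_1)$, into the asserted $\delta$-series with the operators $\left(p(z_2)\frac{\partial}{\partial z_2}\right)^{(j)}$. This is the $\phi$-coordinated analogue of the classical passage from the Jacobi identity to the commutator formula, and the bookkeeping of the $h_s$-factor together with these operators is the delicate point; I expect it to follow the pattern of the commutator computations in \cite{BLP,Li4} once the locality at $z_1=z_2$ has been arranged as above.
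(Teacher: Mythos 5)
Your proposal has the same skeleton as the paper's proof: reduce to $q$ of the form \eqref{eq:qz}, expand the commutator by Lemma \ref{lem:ab-k}, feed the rescaled pair $(a(\lambda_s z),b(z))$ into Lemma \ref{lem:formal-jocobi}, take a residue to make the $\phi$-products appear, and conclude via the uniqueness in Lemma \ref{delta-2}. Your claim (ii) --- the pole-order count in Definition \ref{de:yephi}, showing $a(\lambda z)^{\phi}_{j}b(z)=0$ unless $\lambda$ is a root of $q$ of multiplicity greater than $j$ --- is correct, and is in fact a cleaner route to the ``Furthermore'' clause than the paper's, where that statement is only implicit in the identification of coefficients.

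The genuine gap is the step where you ``cancel $h_s$''. After your residue computation you hold an identity of the shape
\begin{equation*}
\sum_{j\ge 0} B_j(z_2)\,h_s(z_1,z_2)\left(p(z_{2})\frac{\partial}{\partial z_{2}}\right)^{(j)}\bar{p}(z_1)\delta\left(\frac{z_{2}}{z_{1}}\right)=0,
\qquad B_j=A_{sj}(z_2)-a(\lambda_s z_2)^{\phi}_{j}b(z_2),
\end{equation*}
and you justify dividing out $h_s$ by saying both sides are ``supported on $z_1=z_2$, where multiplication by $h_s$ is invertible.'' That principle is false once derivatives of the delta function are present: multiplication by a function is not evaluation at the support, e.g.
\begin{equation*}
f(z_1)\frac{\partial}{\partial z_{2}}\delta\left(\frac{z_{2}}{z_{1}}\right)
=f(z_2)\frac{\partial}{\partial z_{2}}\delta\left(\frac{z_{2}}{z_{1}}\right)
+f'(z_2)\,\delta\left(\frac{z_{2}}{z_{1}}\right),
\end{equation*}
so $h_s$ does not act as the unit $h_s(z_2,z_2)$ but as a triangular operator with that unit on the diagonal and lower-order correction terms. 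The conclusion can be repaired: expanding $h_s(z_1,z_2)\left(p(z_{2})\frac{\partial}{\partial z_{2}}\right)^{(j)}\bar{p}(z_1)\delta\left(\frac{z_{2}}{z_{1}}\right)=\sum_{i\le j}c_{ji}(z_2)\left(p(z_{2})\frac{\partial}{\partial z_{2}}\right)^{(i)}\bar{p}(z_1)\delta\left(\frac{z_{2}}{z_{1}}\right)$ with $c_{jj}(z_2)=h_s(z_2,z_2)$ invertible in $\C((z_2))$, one applies Lemma \ref{delta-2} and takes the top index to get $B_j=0$ for all $j$. But this bookkeeping is exactly the part your proposal defers (``I expect it to follow \ldots''), i.e., it is incomplete at its decisive point. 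The paper engineers around the problem entirely: instead of the naive cofactor $h_s$, it multiplies by $q_s(z)=\prod_{i\neq s}\big((z-\lambda_s)^{k_s}-(\lambda_i-\lambda_s)^{k_s}\big)^{k_i}$, which is still divisible by every $(z-\lambda_i)^{k_i}$ for $i\neq s$ (so Lemma \ref{delta-1} kills the off-diagonal terms, giving \eqref{eq:firstdec}), but which in addition satisfies $q_s(\lambda_s z)=(z-1)^{k_s}h(z)+\alpha$ with $\alpha\in\C^{\times}$ a constant; modulo terms annihilated by Lemma \ref{delta-1}, the multiplier in \eqref{A_i-a_jb} is therefore the nonzero scalar $\alpha$, and Lemma \ref{delta-2} applies with no division step at all. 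You should either import that choice of multiplier or supply the triangular-inversion argument above.
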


\begin{proof} We assume that $q(z)$ has the form as in \eqref{eq:qz} and fix an $s$ with $1\le s\le r$.
 From \eqref{eq:lemma231}, we only need to prove that
\begin{align}\label{needtoprove}
A_{sj}(z_2)=a(\lambda_sz_2)_{j}^{\phi}b(z_2),\quad j=0,\cdots,k_s-1.\end{align}
Let us introduce the polynomials
\begin{eqnarray*}
q_s(z)=\prod_{1\le i\ne s\le r}((z-\lambda_s)^{k_s}-(\lambda_i-\lambda_s)^{k_s})^{k_i}.
\end{eqnarray*}
Note that $(z-\lambda_i)^{k_i}$ divides $q_s(z)$ and so there is a  $g_s(z)\in \C[z]$ such that
\begin{eqnarray}\label{eq:qsz}
q_s(z)(z-\lambda_s)^{k_s}=g_s(z)q(z).
\end{eqnarray}
Then from \eqref{eq:qsz} and \eqref{eq:lemadelta-1}, we have
\begin{equation}\begin{split}\label{eq:firstdec}
 &q_s(z_1/z_2)[a(z_1),b(z_2)]\\
 =\,&\sum _{j=0}^{k_s-1} q_s(z_1/z_2)A_{sj}(z_{2})\left(p(z_{2})\frac{\partial}{\partial z_{2}}\right)^{(j)}
\bar{p}(\lambda_s^{-1}z_1)\delta\left(\frac{\lambda_sz_{2}}{z_{1}}\right).
\end{split}\end{equation}

By replacing $z_1$ with $\lambda_s z_1$ in \eqref{eq:firstdec} and multiplying $(z_1/z_2-1)^{k_s}$ on both sides of \eqref{eq:firstdec} we have
\begin{eqnarray*}
(z_1/z_2-1)^{k_s}q_s(\lambda_s z_1/z_2)[a(\lambda_sz_1),b(z_2)]=0.
\end{eqnarray*}
Thus we have (see Definition \ref{de:yephi})
\begin{eqnarray*}
&&\quad(\phi(z,z_0)/z-1)^{k_s}q_s(\lambda_s \phi(z,z_0)/z)Y_\E^{\phi}(a(\lambda_sz),z_0)b(z)\\
&&=(z_1/z-1)^{k_s}q_s(\lambda_s z_1/z)a(\lambda_sz_1)b(z)\mid_{z_1=\phi(z,z_0)}.
\end{eqnarray*}
In view of Lemma \ref{lem:formal-jocobi}, this implies that
\begin{equation*}\begin{split}
&(z_{2}z)^{-1}\delta\left(\frac{z_{1}-z_{2}}{z_{2}z}\right)q_s(\lambda_s z_1/z_2)a(\lambda_sz_1)b(z_2)\\
&\quad\quad\quad\quad\quad\quad-(z_{2}z)^{-1}\delta\left(\frac{z_{2}-z_{1}}{-z_{2}z}\right)q_s(\lambda_s z_1/z_2)b(z_2)a(\lambda_sz_1)\\
&=z_{1}^{-1}\delta\left(\frac{z_{2}(1+z)}{z_{1}}\right)q_s(\lambda_s z_1/z_{2})Y_\E^{\phi}(a(\lambda_sz_{2}),f_{p(z_2)}(z_2,z))b(z_{2}).
\end{split}\end{equation*}

By taking $\Res _z z_2$  in the above identity we find
\begin{eqnarray*}
&&\quad q_s(\lambda_s z_1/z_2)[a(\lambda_s z_1),b(z_2)]\\
&&=\Res_z z_{1}^{-1}\delta\left(\frac{z_{2}(1+z)}{z_{1}}\right)z_2q_s(\lambda_s z_1/z_{2})Y_\E^{\phi}(a(\lambda_sz_{2}),f_{p(z_2)}(z_2,z))b(z_{2})\\
&&=\Res_{z_0} \frac{\partial}{\partial z_0 }\left(\frac{\phi(z_2,z_0)}{z_2}-1\right) z_{1}^{-1}\delta\left(\frac{\phi(z_2,z_0)}{z_{1}}\right)z_2q_s(\lambda_s z_1/z_{2})Y_\E^{\phi}( a(\lambda_sz_{2}),z_0)b(z_{2})\\
&&=\Res_{z_0} p(\phi(z_2,z_0)) z_{1}^{-1}\delta\left(\frac{\phi(z_2,z_0)}{z_{1}}\right)q_s(\lambda_s z_1/z_{2})Y_\E^{\phi}(a(\lambda_sz_{2}),z_0)b(z_{2})\\
&&=\Res_{z_0} p(z_{1})z_1^{-1}\delta\left(\frac{\phi(z_2,z_0)}{z_{1}}\right)q_s(\lambda_s z_1/z_{2})Y_\E^{\phi}(a(\lambda_sz_{2}),z_0)b(z_{2})\\
&&=\sum_{j\geq0}\left(p(z_{2})\frac{\partial}{\partial z_{2}}\right)^{(j)}\left(\bar{p}(z_1)\delta\left(\frac{z_{2}}{z_{1}}\right)\right)q_s(\lambda_s z_1/z_{2})
a(\lambda_s z_2)_{j}^{\phi}b(z_2),
\end{eqnarray*}
where in the second equality we have used the fact \eqref{hatphi}.
Combining this with \eqref{eq:firstdec} (replacing $z_1$ with $\lambda_s z_1$ therein) we obtain
\begin{align}\label{A_i-a_jb}
\sum_{j=0}^{k_s-1}
q_s\(\lambda_s \frac{z_1}{z_2}\)(A_{sj}(z_2)-a(\lambda_sz_2)_{j}^{\phi}b(z_2)) \left(p(z_{2})\frac{\partial}{\partial z_{2}}\right)^{(j)}\bar{p}(z_1)\delta\left(\frac{z_{2}}{z_{1}}\right)
=0.
\end{align}

Set
\[\alpha=\prod_{1\le i\ne s\le r}(-1)^{k_i}(\lambda_i-\lambda_s)^{k_sk_i}\in \C^\times.\]
Note that  there is an $h(z)\in \C[z]$ such that
\[q_s(\lambda_s z)=\prod_{1\le i\ne s\le r} ((\lambda_sz-\lambda_s)^{k_s}-(\lambda_i-\lambda_s)^{k_s})^{k_i}
=(z-1)^{k_s}h(z)+\alpha.\]
In view of this and \eqref{eq:lemadelta-1},  \eqref{A_i-a_jb} can be rewritten as
\begin{equation*}\begin{split}
\sum_{j=0}^{k_s-1}\alpha(A_{sj}(z_2)-a( \lambda_s z_2)_{j}^{\phi}b(z_2))\left(p(z_{2})\frac{\partial}{\partial z_{2}}\right)^{(j)}\bar{p}(z_1)\delta\left(\frac{z_{2}}{z_{1}}\right)=0.
\end{split}\end{equation*}
Note that $\alpha\ne 0$, this together with Lemma \ref{delta-2} proves \eqref{needtoprove}.
\end{proof}

\begin{remt}{\em
For $p(x)=1$ and $p(x)=x$, Proposition \ref{prop:quasilocalpair} was proved in \cite{Li5} and \cite{Li4} respectively.
}
\end{remt}

\section{$\phi$-coordinated modules of vertex algebras}
In this section we establish some conceptual results on   $\phi$-coordinated (quasi) modules
for vertex algebras.
Let $(V,Y,\mathbf{1})$ be a vertex algebra,
the following notion was introduced in \cite{Li2}.

\begin{dfnt}\label{de:phicoor}{\em
  A {\em $\phi$-coordinated quasi $V$-module} is a vector space $W$ equipped with a linear map
\begin{eqnarray*}
Y_{W}(\cdot,z):V&\rightarrow&\Hom(W,W((z)))\subset(\End W)[[z,z^{-1}]],\\
v&\mapsto &Y_{W}(v,z)=\sum_{n\in\Z}v_{n}z^{-n-1},
\end{eqnarray*}
satisfying the conditions that $Y_{W}(\mathbf{1},z)=1_{W}$ and that for $u,v\in V$, there exists
a polynomial $f(z_1,z_2)\in \C[z_1,z_2]$ such that
\begin{eqnarray*}
&&f(z_1,z_2)Y_{W}(u,z_{1})Y_{W}(v,z_{2})\in\Hom(W,W((z_{1},z_{2}))),\\
&&f(\phi(z_{2},z_{0}),z_2) Y_{W}(Y(u,z_{0})v,z_{2})=(f(z_1,z_2) Y_{W}(u,z_{1})Y_{W}(v,z_{2}))\mid_{z_{1}=\phi(z_{2},z_{0})}.
\end{eqnarray*}
A {\em $\phi$-coordinated $V$-module} is defined as above except that $f(z_1,z_2)$ is assumed to be a polynomial
of the form $(z_1-z_2)^k$ with $k\in \N$.
}
\end{dfnt}

\begin{remt}{\em
When $p(x)=1$, a $\phi$-coordinated $V$-module is just a $V$-module.}
\end{remt}

In what follows we collect some basic properties of $\phi$-coordinated quasi $V$-modules for later use.

\begin{lemt}\cite{Li2} Let $(W,Y_W)$ be a $\phi$-coordinated quasi $V$-module and let $u,v\in V$.
Assume that there exists a nonzero polynomial $q(z_1,z_2)\in \C[z_1,z_2]$ such that
\begin{align*}
q(z_1,z_2)Y_W(u,z_1)Y_W(v,z_2)\in \Hom(W,W((z_1,z_2))).
\end{align*}
Then
\begin{align}
\label{eq:qzlocal} q(z_1,z_2)[Y_W(u,z_1),Y_W(v,z_2)]&=0,\\
q(\phi(z_{2},z_{0}),z_2) Y_{W}(Y(u,z_{0})v,z_{2})&=(q(z_1,z_2) Y_{W}(u,z_{1})Y_{W}(v,z_{2}))\mid_{z_{1}=\phi(z_{2},z_{0})}.
\end{align}
\end{lemt}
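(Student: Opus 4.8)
The plan is to promote the polynomial guaranteed by the definition to the arbitrary $q$ of the hypothesis. By Definition~\ref{de:phicoor} fix a polynomial $f(z_1,z_2)$ for the pair $(u,v)$, so that, writing $\mathcal P=Y_W(u,z_1)Y_W(v,z_2)$, we have $f\mathcal P\in\Hom(W,W((z_1,z_2)))$ together with
\[
f(\phi(z_2,z_0),z_2)\,Y_W(Y(u,z_0)v,z_2)=\big(f(z_1,z_2)\mathcal P\big)\big|_{z_1=\phi(z_2,z_0)} .
\]
By hypothesis $q\mathcal P\in\Hom(W,W((z_1,z_2)))$ as well. The one formal fact I use repeatedly is that on $\Hom(W,W((z_1,z_2)))$ the substitution $z_1\mapsto\phi(z_2,z_0)$ is defined and commutes with multiplication by any polynomial in $z_1,z_2$; this is exactly why the singularities must be cleared by $f$ or $q$ before substituting.

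For the second identity, abbreviate $\phi=\phi(z_2,z_0)$, multiply the displayed identity by $q(\phi,z_2)$, and move the polynomial factors through the substitution using that $f\mathcal P$, $q\mathcal P$ and $qf\mathcal P$ all lie in $\Hom(W,W((z_1,z_2)))$:
\[
q(\phi,z_2)f(\phi,z_2)\,Y_W(Y(u,z_0)v,z_2)=f(\phi,z_2)\,\big(q(z_1,z_2)\mathcal P\big)\big|_{z_1=\phi}.
\]
It remains to cancel the scalar $f(\phi(z_2,z_0),z_2)$. Factoring $f=(z_1-z_2)^m h$ with $h(z_2,z_2)\neq0$ and using $\phi(z_2,z_0)-z_2=p(z_2)z_0+O(z_0^2)$ with $p\neq0$, one gets $f(\phi(z_2,z_0),z_2)=z_0^m\cdot(\text{a unit of }\C((z_2))[[z_0]])$, hence a nonzerodivisor on $\Hom(W,W((z_2)))((z_0))$. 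Cancelling it yields the second identity. This step is routine once the nonzerodivisor observation is isolated.

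For the commutator identity set $F=q\mathcal P\in\Hom(W,W((z_1,z_2)))$. Since $\mathcal P$ is the expansion of $F/q$ in $\Hom(W,W((z_1))((z_2)))$, it suffices to prove that the reverse product $Y_W(v,z_2)Y_W(u,z_1)$ is the expansion of the same $F/q$ in $\Hom(W,W((z_2))((z_1)))$; granting this, $q\big(\mathcal P-Y_W(v,z_2)Y_W(u,z_1)\big)=F-F=0$, which is the first identity with the given $q$. Applying Definition~\ref{de:phicoor} to the ordered pair $(v,u)$ shows a nonzero polynomial clears the reverse product too, so both products are expansions of rational-type elements; the substance is that these elements coincide, i.e.\ that locality holds on $W$. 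This is where the locality of $Y$ on $V$ must be transported to $W$: using skew-symmetry $Y(u,z_0)v=e^{z_0\mathcal D}Y(v,-z_0)u$ on $V$ and the translation-covariance of $\phi$-coordinated modules, the second identity for $(u,v)$ and its analogue for $(v,u)$ are reconciled under the change of variable $z_1=\phi(z_2,z_0)$, forcing the two expansions to agree. Once locality is in hand, assembling $\mathcal P$, the reverse product, and $Y_W(Y(u,z_0)v,z_2)$ into the formal Jacobi identity of Lemma~\ref{lem:formal-jocobi} and taking a residue recovers the explicit delta-function form of the commutator, consistent with Proposition~\ref{prop:quasilocalpair}. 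I expect this transfer of locality — reconciling the forward substitution $z_1\mapsto\phi(z_2,z_0)$ against the reverse one — to be the main obstacle, the cancellation in the second step being comparatively formal.
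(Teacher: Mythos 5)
The paper offers no proof of this lemma at all --- it is imported verbatim from \cite{Li2} --- so your proposal has to be judged against the standard argument there. Your proof of the \emph{second} identity is correct and is exactly that argument: with $f$ the definitional polynomial and $\mathcal P=Y_W(u,z_1)Y_W(v,z_2)$, multiply by $q(\phi(z_2,z_0),z_2)$, commute polynomial factors with the substitution $z_1=\phi(z_2,z_0)$ (legitimate since $f\mathcal P$, $q\mathcal P$, $qf\mathcal P$ all lie in $\Hom(W,W((z_1,z_2)))$), and cancel $f(\phi(z_2,z_0),z_2)$; your observation that $f(\phi(z_2,z_0),z_2)=z_0^m\cdot(\text{unit of }\C((z_2))[[z_0]])$, because $\phi(z_2,z_0)-z_2=z_0p(z_2)+O(z_0^2)$ with $p(z_2)$ invertible in $\C((z_2))$, is the right nonzerodivisor fact and makes the cancellation sound.

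The \emph{first} identity \eqref{eq:qzlocal} is where you have a genuine gap: the sentence ``the second identity for $(u,v)$ and its analogue for $(v,u)$ are reconciled under the change of variable $z_1=\phi(z_2,z_0)$, forcing the two expansions to agree'' asserts precisely the statement that needs proof, and all of the work lies there. Concretely, writing $\mathcal P'=Y_W(v,z_2)Y_W(u,z_1)$ and $f_1,f_2$ for the definitional polynomials of the ordered pairs $(u,v)$, $(v,u)$, the two weak-associativity identities involve substitutions in \emph{different} variables: $z_1=\phi(z_2,z_0)$ applied to $f_1\mathcal P$, versus $z_2=\phi(z_1,z_0)$ applied to $f_2'\mathcal P'$ where $f_2'(z_1,z_2)=f_2(z_2,z_1)$. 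To reconcile them you must (a) compose formal substitutions and use $\phi(\phi(x,a),b)=\phi(x,a+b)$ to undo one by the other; (b) convert $Y_W(Y(v,z_0)u,\cdot)$ into an expression in $Y_W(Y(u,-z_0)v,\cdot)$ via skew-symmetry \emph{together with} Lemma \ref{lem:Dpz} in the exponentiated form $Y_W(e^{z_0\mathcal D}w,z)=e^{z_0p(z)\frac{d}{dz}}Y_W(w,z)$, taking care that the exponential does not act on the $z_0$ inside $Y(u,-z_0)$; and (c) invoke the injectivity of the substitution map $A\mapsto A|_{z_1=\phi(z_2,z_0)}$ on $\Hom(W,W((z_1,z_2)))$ to remove the substitution and conclude $f_1f_2'\,\mathcal P=f_1f_2'\,\mathcal P'$. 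None of (a)--(c) appears in your text, and (c) in particular is indispensable: without it, agreement \emph{after} substituting $z_1=\phi(z_2,z_0)$ gives nothing before substitution.

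Two further points. First, even after (a)--(c) you only obtain locality with the particular polynomial $f_1f_2'$, not with the given $q$; the passage to $q$ needs a separate (short) argument, e.g.\ multiply $f_1f_2'(\mathcal P-\mathcal P')=0$ by $q$, note that $q(\mathcal P-\mathcal P')\in\Hom(W,W((z_2))((z_1)))$ --- this is exactly where the hypothesis $q\mathcal P\in\Hom(W,W((z_1,z_2)))$ is used --- and use that this space is torsion-free over $\C[z_1,z_2]$ because $\C((z_2))((z_1))$ is a field. Your reduction ``it suffices to show $\mathcal P'$ is the expansion of $F/q$'' is fine, but it is this torsion-freeness that closes it, and you never state it. Second, your closing appeal to Lemma \ref{lem:formal-jocobi} cannot fill the gap: that lemma takes the locality relation $(z_1-z_2)^kA=(z_1-z_2)^kB$ as a \emph{hypothesis}, so using it to establish \eqref{eq:qzlocal} would be circular (it is also unnecessary, since the lemma does not ask for a delta-function expansion of the commutator).
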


We define the linear operator $\mathcal{D}$  on $V$  by $\mathcal{D}v=v_{-2}\vac$ for $v\in V$.

\begin{lemt}\label{lem:Dpz}\cite{Li2} Let $(W,Y_W)$ be a $\phi$-coordinated quasi $V$-module.
Then
\begin{align}
\label{dpzddz} Y_W(\mathcal{D}v,z)&=\left(p(z)\frac{d}{dz}\right)Y_W(v,z),\quad v\in V,\\
\label{ywhom} Y_W(Y(u,z_0)v,z)&=Y_\E^\phi(Y_W(u,z),z_0)Y_W(v,z),\quad u,v\in V.
\end{align}
\end{lemt}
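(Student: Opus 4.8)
The plan is to obtain \eqref{ywhom} first, as a near-immediate reformulation of the defining associativity of a $\phi$-coordinated quasi module, and then to derive \eqref{dpzddz} from \eqref{ywhom} by specializing $v=\vac$. For \eqref{ywhom}, I would fix $u,v\in V$ and choose, by Definition \ref{de:phicoor}, a polynomial $f(z_1,z_2)$ with $f(z_1,z_2)Y_W(u,z_1)Y_W(v,z_2)\in\Hom(W,W((z_1,z_2)))$ satisfying the associativity relation $f(\phi(z_2,z_0),z_2)\,Y_W(Y(u,z_0)v,z_2)=(f(z_1,z_2)Y_W(u,z_1)Y_W(v,z_2))|_{z_1=\phi(z_2,z_0)}$. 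By \eqref{eq:qzlocal} this same $f$ annihilates $[Y_W(u,z_1),Y_W(v,z_2)]$, so the pair $(Y_W(u,z),Y_W(v,z))$ is quasi local and $Y_\E^\phi(Y_W(u,z),z_0)Y_W(v,z)$ is defined through $f$ as in Definition \ref{de:yephi}. Dividing the associativity relation by $f(\phi(z_2,z_0),z_2)$ then produces precisely the right-hand side of \eqref{ywhom}; the division is legitimate since $\phi(z_2,z_0)-z_2$ has leading term $z_0p(z_2)$ with $p\ne0$, so $f(\phi(z_2,z_0),z_2)$ is invertible in the ring of Laurent series in $z_0$ over $\C((z_2))$.

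To prove \eqref{dpzddz}, I would apply \eqref{ywhom} with $v=\vac$. The vacuum (creation) property of $V$ gives $Y(u,z_0)\vac=e^{z_0\mathcal{D}}u=\sum_{n\ge0}\mathcal{D}^{(n)}u\,z_0^n$, so the left-hand side becomes $\sum_{n\ge0}Y_W(\mathcal{D}^{(n)}u,z)z_0^n$. On the right-hand side, $Y_W(\vac,z)=1_W$ commutes with everything, so $(Y_W(u,z),1_W)$ is local with $q=1$ and Definition \ref{de:yephi} collapses to $Y_\E^\phi(Y_W(u,z),z_0)1_W=Y_W(u,z_1)|_{z_1=\phi(z,z_0)}=Y_W(u,\phi(z,z_0))$. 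Thus \eqref{ywhom} reduces to $\sum_{n\ge0}Y_W(\mathcal{D}^{(n)}u,z)z_0^n=Y_W(u,\phi(z,z_0))$, and extracting the coefficient of $z_0$ will give \eqref{dpzddz}.

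The main obstacle is the identity $Y_W(u,\phi(z,z_0))=e^{z_0p(z)\frac{d}{dz}}Y_W(u,z)$, that is, that substituting $z_1=\phi(z,z_0)$ into $Y_W(u,z_1)$ is implemented by the formal flow $e^{z_0p(z)\frac{d}{dz}}$. This is where the associate axioms \eqref{defphi} enter: setting $G(z,z_0)=Y_W(u,\phi(z,z_0))$ and differentiating in $z_0$, the relation $\frac{\partial}{\partial z_0}\phi(z,z_0)=p(\phi(z,z_0))$ (a consequence of $\phi(\phi(x,z_1),z_2)=\phi(x,z_1+z_2)$ together with $\phi=e^{zp(x)\frac{d}{dx}}(x)$) gives $\frac{\partial^k}{\partial z_0^k}G|_{z_0=0}=(p(z)\frac{d}{dz})^kY_W(u,z)$, whence $G=e^{z_0p(z)\frac{d}{dz}}Y_W(u,z)$. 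Comparing the $z_0^1$-coefficients of the two sides of $\sum_{n\ge0}Y_W(\mathcal{D}^{(n)}u,z)z_0^n=e^{z_0p(z)\frac{d}{dz}}Y_W(u,z)$ then yields $Y_W(\mathcal{D}u,z)=p(z)\frac{d}{dz}Y_W(u,z)$, which is \eqref{dpzddz}. Beyond this, the only care required is the formal-variable bookkeeping: the substitution $z_1=\phi(z,z_0)$ and the division above are carried out in $\End(W)((z))[[z_0]]$, which is legitimate because $\phi(z,z_0)\in z+z_0\,\C((z))[[z_0]]$.
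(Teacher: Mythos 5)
Your proof is correct, and since the paper itself gives no argument for this lemma (it is quoted from \cite{Li2}), there is nothing to conflict with: your route—reading \eqref{ywhom} off directly from Definition \ref{de:phicoor} and Definition \ref{de:yephi} after checking that $f(\phi(z_2,z_0),z_2)$ is invertible in $\C((z_2))((z_0))$, then specializing $v=\vac$ and using $Y(u,z_0)\vac=e^{z_0\mathcal{D}}u$ together with the flow equation $\frac{\partial}{\partial z_0}\phi(z,z_0)=p(\phi(z,z_0))$ to obtain \eqref{dpzddz}—is essentially the standard proof from that reference. The formal-variable justifications you supply (invertibility from the leading term $z_0p(z_2)$ of $\phi(z_2,z_0)-z_2$, and the substitution/Taylor argument giving $Y_W(u,\phi(z,z_0))=e^{z_0p(z)\frac{d}{dz}}Y_W(u,z)$) are exactly the points that need care, and they are handled correctly.
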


The following is an analogue of Borcherd's commutator formula (cf. \cite{BLP}).
\begin{prpt}\label{prop-module-comut}
Let $(W,Y_{W})$ be a $\phi$-coordinated $V$-module. Then for $u,v\in V$,
\begin{equation*}\begin{split}
[Y_{W}(u,z_{1}),Y_{W}(v,z_{2})]=\sum_{j\geq0}Y_{W}(u_{j}v,z_{2})\left(p(z_2)\frac{\partial}{\partial z_{2}}\right)^{(j)}\bar{p}(z_1)\delta\left(\frac{z_{2}}{z_{1}}\right).
\end{split}\end{equation*}
Conversely, if $W$ is faithful and
\begin{equation*}\begin{split}
[Y_{W}(u,z_{1}),Y_{W}(v,z_{2})]=\sum_{j\geq0}Y_{W}(c_j,z_{2})\left(p(z_2)\frac{\partial}{\partial z_{2}}\right)^{(j)}\bar{p}(z_1)\delta\left(\frac{z_{2}}{z_{1}}\right),
\end{split}\end{equation*}
for some $c_j\in V$, then $c_{j}=u_{j}v$ for all $j\geq 0$.
\end{prpt}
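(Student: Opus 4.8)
The plan is to derive both assertions from the commutator formula for local pairs established in Proposition \ref{prop:quasilocalpair}, together with the weak-associativity identity \eqref{ywhom} that lets me read off the relevant $\phi$-products as module actions.

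For the first assertion, I would start from Definition \ref{de:phicoor}: there is a $k\in\N$ with $(z_1-z_2)^kY_W(u,z_1)Y_W(v,z_2)\in\Hom(W,W((z_1,z_2)))$, whence $(z_1-z_2)^k[Y_W(u,z_1),Y_W(v,z_2)]=0$ by \eqref{eq:qzlocal}. Since $(z_1-z_2)^k=z_2^k(z_1/z_2-1)^k$, this presents $(Y_W(u,z),Y_W(v,z))$ as a local pair in $\E(W)$ with $q(z)=(z-1)^k$, whose unique root is $\lambda=1$ of multiplicity $k$. Proposition \ref{prop:quasilocalpair} then collapses the sum over $\lambda\in\C^\times$ to $\lambda=1$ and yields
\begin{equation*}
[Y_W(u,z_1),Y_W(v,z_2)]=\sum_{j=0}^{k-1}Y_W(u,z_2)_j^\phi Y_W(v,z_2)\left(p(z_2)\frac{\partial}{\partial z_2}\right)^{(j)}\bar{p}(z_1)\delta\left(\frac{z_2}{z_1}\right).
\end{equation*}
To finish, I identify the coefficients: comparing coefficients of $z_0^{-j-1}$ on the two sides of \eqref{ywhom}, and using $Y(u,z_0)v=\sum_{n\in\Z}u_nv\,z_0^{-n-1}$, gives $Y_W(u,z)_j^\phi Y_W(v,z)=Y_W(u_jv,z)$ for every $j$. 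Substituting this in produces the asserted formula, the sum over $j\ge 0$ being finite since these $\phi$-products vanish for $j\ge k$.

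For the converse, I would apply the formula just proved, subtract it from the hypothesized expression, and obtain
\begin{equation*}
\sum_{j\ge 0}Y_W(u_jv-c_j,z_2)\left(p(z_2)\frac{\partial}{\partial z_2}\right)^{(j)}\bar{p}(z_1)\delta\left(\frac{z_2}{z_1}\right)=0,
\end{equation*}
a finite sum because $u_jv=0$ for large $j$. This is exactly a relation of the shape handled by Lemma \ref{delta-2} with the single value $\lambda=1$, so each coefficient must vanish: $Y_W(u_jv-c_j,z)=0$ for all $j$. Faithfulness of $W$ then forces $c_j=u_jv$, as claimed.

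The analytic substance is already contained in Section 2, so I expect no serious difficulty here; the one point requiring care is the bookkeeping that matches the $\phi$-products $Y_W(u,z)_j^\phi Y_W(v,z)$ coming out of Proposition \ref{prop:quasilocalpair} with the module coefficients $Y_W(u_jv,z)$, and this is delivered cleanly by \eqref{ywhom}.
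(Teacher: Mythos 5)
Your proposal is correct and follows essentially the same route as the paper's own proof: locality from \eqref{eq:qzlocal}, the expansion of Proposition \ref{prop:quasilocalpair} (collapsing to $\lambda=1$ since $q(z)=(z-1)^k$), identification of the $\phi$-products $Y_W(u,z)_j^\phi Y_W(v,z)$ with $Y_W(u_jv,z)$ via \eqref{ywhom}, and Lemma \ref{delta-2} plus faithfulness for the converse. Your write-up simply makes explicit the bookkeeping that the paper leaves implicit.
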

\begin{proof} The second assertion follows from Lemma \ref{delta-2}.
For the first one, let $u,v\in V$. In view of \eqref{eq:qzlocal}, there is a nonnegative integer $k$ such that
\[(z_1-z_2)^k\,[Y_W(u,z_1),Y_W(v,z_2)]=0.\]
It then follows from Proposition \ref{prop:quasilocalpair} that
\begin{equation*}\begin{split}
[Y_W(u,z_1),Y_W(v,z_2)]&=\sum\limits_{j\ge 0} Y_W(u,z_2)^\phi_{j}Y_W(v,z_2)\left(p(z_{2})\frac{\partial}{\partial z_{2}}\right)^{(j)}\bar p(z_1)\delta\left(\frac{z_{2}}{z_{1}}\right).
\end{split}\end{equation*}
This together with \eqref{ywhom} proves the first assertion.
\end{proof}

Next we are going to give a conceptual  construction of $\phi$-coordinated quasi modules for vertex algebras.
The following is a $\phi$-generalization of \cite[Proposition 4.3]{Li5}.

\begin{prpt}\label{prop:n-product-local} Let $a(z),b(z),c(z)\in\E(W)$. Assume that
\begin{equation*}\begin{split}
f(z_1,z_2)[a(z_1),b(z_2)]=
g(z_1,z_2)[a(z_1),c(z_2)]=
h(z_1,z_2)[b(z_1),c(z_2)]=0
\end{split}\end{equation*}
for some nonzero polynomials $f(z_1,z_2),g(z_1,z_2),h(z_1,z_2)$. Then for any $n\in\Z$, there exists a nonnegative integer $k$ such that
\[g(z_1,z_2)^kh(z_1,z_2)[a(z_1)_{n}^\phi b(z_1),c(z_2)]=0.\]
\end{prpt}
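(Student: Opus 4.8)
The plan is to reduce the statement to a computation with the $\phi$-product $a(z)_n^\phi b(z)$ and then to control locality of iterated products by multiplying through with suitable powers of the given locality polynomials. Recall that by definition (Definition \ref{de:yephi}), since $f(z_1,z_2)[a(z_1),b(z_2)]=0$, we may form $Y_\E^\phi(a(z),z_0)b(z)$, whose coefficients are exactly the $a(z)_n^\phi b(z)$. The key structural input is the Jacobi-type identity encoded in Lemma \ref{lem:formal-jocobi}, which lets us pass between the expression $a(z)_n^\phi b(z)$ and the product $a(\phi(z_2,z_0))b(z_2)$ evaluated via substitution. My first step is therefore to write $Y_\E^\phi(a(z),z_0)b(z)$ in terms of $a(z_1)b(z)$ through the substitution $z_1=\phi(z,z_0)$, keeping track of the clearing polynomial $f(\phi(z,z_0),z)^{-1}$ that appears in the definition.

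Next I would attack the commutator $[a(z_1)_n^\phi b(z_1),c(z_2)]$ directly. The natural approach is to realize $a(z_1)_n^\phi b(z_1)$ as a residue in an auxiliary variable: using Lemma \ref{lem:formal-jocobi} one writes a $\delta$-function identity relating the iterate $Y_\E^\phi(a(z),z_0)b(z)$ to the ordered products $a(z_1)b(z_2)$ and $b(z_2)a(z_1)$, and then extracts the $z_0^{-n-1}$-coefficient by a residue. Feeding this into the commutator with $c(z_2)$ produces, after expanding, commutators of the form $[a(z_1),c(z_2)]$ and $[b(z_1),c(z_2)]$ (with $z_1$ specialized along $\phi$). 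These are killed by $g$ and $h$ respectively. The point of taking a large power $k$ is that after the substitution $z_1=\phi(z,z_0)$ and the residue in the auxiliary variable, the polynomials $g,h$ reappear evaluated at shifted arguments; multiplying by $g(z_1,z_2)^k h(z_1,z_2)$ for $k$ large enough ensures that every surviving term carries a factor annihilating the relevant commutator. Concretely, I expect one factor of $h$ to clear $[b(z_1),c(z_2)]$ outright, while the power $k$ on $g$ absorbs the negative powers of the clearing denominator $f(\phi,z)^{-1}$ and the shifts introduced by the $\delta$-function expansion, so that a genuine power of $g(z_1,z_2)$ multiplies $[a(z_1),c(z_2)]$.

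The main obstacle will be the bookkeeping of the substitution $z_1=\phi(z,z_0)$ together with the residue extraction: one must verify that multiplying by a high enough power of $g(z_1,z_2)$ genuinely produces, after all substitutions, a polynomial that annihilates the $a$–$c$ commutator, rather than merely a formal series. This is where the associativity property $\phi(\phi(x,z_1),z_2)=\phi(x,z_1+z_2)$ and the explicit form \eqref{hatphi} are essential, since they control how $g(\phi(z,z_0),z_2)$ relates back to $g(z_1,z_2)$ under the inverse substitution. I would handle this by first establishing, for fixed $n$, a bound on the pole order in $z_0$ of $f(\phi(z,z_0),z)^{-1}$ and of the relevant $\delta$-expansions, and then choosing $k$ to exceed this bound. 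The remaining manipulations—expanding the commutator, applying the locality of $[a,c]$ and $[b,c]$, and recollecting—are routine once the correct $k$ is identified, so the crux is genuinely the estimate on $k$ and the verification that the substitution preserves polynomiality in $(z_1,z_2)$.
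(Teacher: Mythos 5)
Your proposal is, in substance, the paper's own proof: write $a(z)_{n}^{\phi}b(z)=\Res_{z_0}z_0^{n}\,Y_\E^\phi(a(z),z_0)b(z)$, implement the substitution $z_1=\phi(z,z_0)$ from Definition \ref{de:yephi} by a delta function, choose $k$ so that $z_0^{k+n}f(\phi(z,z_0),z)^{-1}\in\C((z))[[z_0]]$, and then move $c$ across: a single factor of $h$ kills $[b,c]$, while $g(z_2,z_3)^k$, transported through the delta function via the associativity identity $\phi(\phi(z,z_0),-z_0)=z$ and expanded as $e^{-z_0p(z_1)\frac{\partial}{\partial z_1}}g(z_1,z_3)^k$, contributes to the residue only through terms with at most $k-1$ derivatives, each of which still carries a positive power of $g(z_1,z_3)$ annihilating $[a,c]$. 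Your identification of the crux --- the pole-order bound in $z_0$ on $f(\phi(z,z_0),z)^{-1}$, with $k$ chosen to exceed it, and the control of $g$ under the inverse substitution --- is exactly what the paper does.

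One correction: Lemma \ref{lem:formal-jocobi} is not the right tool here, and the paper does not use it in this proof. Its hypotheses require locality against a power of $(z_1-z_2)$, which a merely quasi-local pair (arbitrary nonzero polynomial $f(z_1,z_2)$) need not satisfy, so the two-sided delta-function identity you invoke --- relating $a(z_1)b(z_2)$, $b(z_2)a(z_1)$ and the iterate --- is not available in the generality of this proposition. It is also unnecessary: the commutator with $c$ is handled entirely by moving $c$ across the one-sided substitution expression coming from Definition \ref{de:yephi}, never by comparing the two orderings of $a$ and $b$. If you attempt the route through Lemma \ref{lem:formal-jocobi} you will be blocked in the quasi-local case; the direct definitional route that you also sketch is the one that goes through.
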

\begin{proof}Fix an $n\in\Z$ and take a nonnegative integer $k$ such that
\[z_0^{k+n}f(\phi(z,z_0),z)^{-1}\in\C((z))[[z_0]].\]
Using this and the fact
\[\phi(\phi(z,z_0),-z_0)=\phi(z,0)=z=\phi(\phi(z,-z_0),z_0),\]
we obtain
\begin{eqnarray*}
&&\quad g(z_2,z_3)^kh(z_2,z_3)(a(z_2)_{n}^\phi b(z_2))c(z_3)\\
&&=\Res_{z_0}z_0^ng(z_2,z_3)^kh(z_2,z_3)\left(Y_\E^\phi(a(z_2),z_0)b(z_2)\right)c(z_3)\\
&&=\Res_{z_1}\Res_{z_0}z_0^ng(z_2,z_3)^kh(z_2,z_3)f(\phi(z_2,z_0),z_2)^{-1}\\
&&\quad z_1^{-1}\delta\left(\frac{\phi(z_2,z_0)}{z_1}\right)(f(z_1,z_2)a(z_1)b(z_2))c(z_3)\\
&&=\Res_{z_1}\Res_{z_0}z_0^ng(\phi(z_1,-z_0),z_3)^kh(z_2,z_3)f(\phi(z_2,z_0),z_2)^{-1}\\
&&\quad z_1^{-1}\delta\left(\frac{\phi(z_2,z_0)}{z_1}\right)(f(z_1,z_2)a(z_1)b(z_2))c(z_3)\\
&&=\Res_{z_1}\Res_{z_0}z_0^n\left(e^{-z_0p(z_1)\frac{\partial}{\partial z_1}}g(z_1,z_3)^k\right)h(z_2,z_3)f(\phi(z_2,z_0),z_2)^{-1}\\
&&\quad z_1^{-1}\delta\left(\frac{\phi(z_2,z_0)}{z_1}\right)(f(z_1,z_2)a(z_1)b(z_2))c(z_3)\\
&&=\Res_{z_1}\Res_{z_0}\sum_{i=0}^{k-1}z_0^{n+i}\left(\left(-p(z_1)\frac{\partial}{\partial z_1}\right)^{(i)}g(z_1,z_3)^k\right)h(z_2,z_3)f(\phi(z_2,z_0),z_2)^{-1}\\
&&\quad z_1^{-1}\delta\left(\frac{\phi(z_2,z_0)}{z_1}\right)(f(z_1,z_2)a(z_1)b(z_2))c(z_3)\\
&&=\Res_{z_1}\Res_{z_0}\sum_{i=0}^{k-1}z_0^{n+i}\left(\left(-p(z_1)\frac{\partial}{\partial z_1}\right)^{(i)}g(z_1,z_3)^k\right)h(z_2,z_3)c(z_3)\\
&&\quad f(\phi(z_2,z_0),z_2)^{-1}z_1^{-1}\delta\left(\frac{\phi(z_2,z_0)}{z_1}\right)(f(z_1,z_2)a(z_1)b(z_2))\\
&&=\Res_{z_1}\Res_{z_0}z_0^n\left(e^{-z_0p(z_1)\frac{\partial}{\partial z_1}}g(z_1,z_3)^k\right)h(z_2,z_3)c(z_3)f(\phi(z_2,z_0),z_2)^{-1}\\
&&\quad z_1^{-1}\delta\left(\frac{\phi(z_2,z_0)}{z_1}\right)(f(z_1,z_2)a(z_1)b(z_2))\\
&&=g(z_2,z_3)^kh(z_2,z_3)c(z_3)(a(z_2)_{n}^\phi b(z_2)).
\end{eqnarray*}
\end{proof}

A subset $S$ of $\E(W)$ is said to be {\em (resp.\,quasi) local} if any  pair in $S$ is (resp.\,quasi) local, and
a (resp.\,quasi) local subspace $U$ of $\E(W)$ is said to be {\em $Y_\E^\phi$-closed} if
\[a(z)^\phi_{n} b(z)\in U\quad\te{for all }a(z),b(z)\in U,\ n\in\Z.\]
In view of Proposition \ref{prop:n-product-local}, we can formulate the following notion.

\begin{dfnt}{\em Let $S$ be a quasi local subset of $\E(W)$.
We define $\<S\>_\phi$ to be the smallest $Y_\E^\phi$-closed quasi local subspace of $\E(W)$ which contains $S$ and $1_W$.}
\end{dfnt}

We need the following $\phi$-generalization of \cite[Proposition 4.8]{Li5} for later use.

\begin{prpt}\label{prop:quasilocaltolocal} For  $a(z),b(z)\in \E(W)$, if
\begin{align*}
f(z_1,z_2)\,[a(z_1),b(z_2)]=0
\end{align*}
for  some $f(z_1,z_2)\in \C[z_1,z_2]$,
then
\[(z_1-z_2)^k\,[Y_\E^\phi(a(z),z_1),Y_\E^\phi(b(z),z_2)]=0,\]
where $k$ is the order of the zero of $f(z_1,z_2)$ at $z_1=z_2$.
\end{prpt}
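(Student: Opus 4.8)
The plan is to reduce the statement to the \emph{bare} quasi-locality of the pair $(a,b)$ by unravelling both composite $\phi$-products through Definition \ref{de:yephi}, and then to swap $a$ past $b$ using the hypothesis while keeping careful track of what happens to $f$ under the $\phi$-substitutions. First I would factor
$$f(z_1,z_2)=(z_1-z_2)^{k}g(z_1,z_2),\qquad (z_1-z_2)\nmid g(z_1,z_2),$$
so that $k$ is by definition the order of the zero of $f$ along the diagonal and $g(z,z)\ne 0$ as an element of $\C((z))$ (hence a unit there). Since the asserted identity is an identity of operators on $\E(W)$, it suffices to test it on an arbitrary $c(z)$; and by Proposition \ref{prop:n-product-local} we may assume $a(z),b(z),c(z)$ all lie in one $Y_\E^\phi$-closed quasi local subspace, so that every composite $\phi$-product written below is defined.

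Next I would expand $Y_\E^\phi(a(z),z_1)Y_\E^\phi(b(z),z_2)c(z)$ and the opposite composite $Y_\E^\phi(b(z),z_2)Y_\E^\phi(a(z),z_1)c(z)$ using Definition \ref{de:yephi}, following the residue-and-delta calculation in the proof of Proposition \ref{prop:n-product-local}. The goal is to realise the two composites as the two iterated expansions (first in $z_1$, resp.\ first in $z_2$) of a single expression built from the bare products $a(z_1')b(z_2')c(z)$ and $b(z_2')a(z_1')c(z)$ after the substitutions $z_1'=\phi(z,\cdot)$ and $z_2'=\phi(z,z_2)$. Here I would use the group law $\phi(\phi(x,s),t)=\phi(x,s+t)$ together with \eqref{hatphi} to compose and invert the substitutions cleanly, exactly as in Section 2.

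Finally I would interchange the two orderings by invoking the hypothesis $f(z_1',z_2')[a(z_1'),b(z_2')]=0$. The point is the fate of $f(z_1',z_2')$ after substitution: writing $z_1'-z_2'=\phi(z,\cdot)-\phi(z,\cdot)$, the factor $g(z_1',z_2')$ becomes an \emph{invertible} series, since its diagonal value $g(z,z)$ is a nonzero element of $\C((z))$ and the off-diagonal zeros $z_1'=\mu z_2'$ with $\mu\ne 1$ lie away from the locus where we expand; hence $g$ cancels against the normalising factor coming from Definition \ref{de:yephi} and contributes nothing. By contrast $(z_1'-z_2')^{k}$ vanishes to order $k$ along $z_1=z_2$ and is precisely what obstructs the swap, so that after clearing denominators the commutator is annihilated by $(z_1-z_2)^{k}$, which is the desired conclusion.

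The main obstacle is this last step: showing that the genuinely \emph{quasi} (off-diagonal, $\mu\ne1$) part of the locality of $(a,b)$ leaves no trace in $[Y_\E^\phi(a,z_1),Y_\E^\phi(b,z_2)]$, so that the generated operators are honestly \emph{local} (singular only along $z_1=z_2$) rather than merely quasi local. Concretely this rests on the invertibility of $g$ after the $\phi$-substitution and on correctly matching the obstruction factor $(z_1'-z_2')^{k}$ with $(z_1-z_2)^{k}$ across the two expansion regions; the bookkeeping of the nested $\phi$-substitutions, rather than any new idea, is where the work lies. One can alternatively first establish that the commutator has diagonal support and then pin the order using the refined vanishing in Proposition \ref{prop:quasilocalpair}, namely that $a(z)_j^\phi b(z)=0$ for $j\ge k$.
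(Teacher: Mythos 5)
Your proposal is correct and rests on exactly the same key idea as the paper's proof: factor $f(z_1,z_2)=(z_1-z_2)^k f_0(z_1,z_2)$ and observe that under the substitution $z_i\mapsto\phi(z,z_i)$ both $f_0(\phi(z,z_1),\phi(z,z_2))$ (whose constant term $f_0(z,z)$ is nonzero in $\C((z))$) and $(\phi(z,z_1)-\phi(z,z_2))/(z_1-z_2)$ (whose constant term is $p(z)\ne 0$) are units in $\C((z))[[z_1,z_2]]$, so that only the factor $(z_1-z_2)^k$ genuinely obstructs the commutator. The one difference is that the paper obtains the transfer identity $f(\phi(z,z_1),\phi(z,z_2))\,[Y_\E^\phi(a(z),z_1),Y_\E^\phi(b(z),z_2)]=0$ in a single stroke by citing \cite[Proposition 2.18]{JKLT}, whereas you propose to rederive it by expanding the composites on a test vector --- a viable but substantially longer route that your sketch does not actually carry out (and note that your suggested alternative ending via Proposition \ref{prop:quasilocalpair} would need extra care, since that proposition assumes a locality polynomial of the special form $q(z_1/z_2)$ rather than a general $f(z_1,z_2)$).
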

\begin{proof} Let $a(z),b(z)$, $f(z_1,z_2)$ and $k$ be as in the proposition.
Then it follows from \cite[Proposition 2.18]{JKLT} that
\begin{align}\label{fphiz1z2}
f(\phi(z,z_1),\phi(z,z_2))[Y_\E^\phi(a(z),z_1),Y_\E^\phi(b(z),z_2)]=0.
\end{align}

Write
\[f(z_1,z_2)=(z_1-z_2)^k f_0(z_1,z_2)\]
for some polynomial $f_0(z_1,z_2)$ with $f_0(z_1,z_1)\ne 0$.
Note that $\phi(z,0)=z$.
This implies that $f_0(\phi(z,0),\phi(z,0))$ is nonzero and hence $f_0(\phi(z,z_1),\phi(z,z_2))$ is
invertible in $\C((z))[[z_1,z_2]]$.
On the other hand, from the definition of $\phi(z,z_1)=e^{z_1p(z)\frac{d}{dz}}(z)$, we have
\begin{align*}
\phi(z,z_1)-\phi(z,z_2)=(z_1-z_2)g(z,z_1,z_2)
\end{align*}
for some  $g(z,z_1,z_2)\in \C((z))[[z_1,z_2]]$.
Furthermore, we have $g(z,0,0)=p(z)\ne 0$ and hence
$g(z,z_1,z_2)$ is also invertible in $\C((z))[[z_1,z_2]]$.
Thus, we have shown that
$f(\phi(z,z_1),\phi(z,z_2))$ is a product of $(z_1-z_2)^k$ and an invertible element in
$\C((z))[[z_1,z_2]]$.
This together with \eqref{fphiz1z2} proves the proposition.
\end{proof}

As the main result of this section we have:

\begin{thm}\label{thm-(quasi)localset-va} Let $S$ be a quasi local subset of $\E(W)$.
Then $(\langle S\rangle_{\phi},Y_\E^{\phi},1_W)$ is  a vertex algebra with  $S$ as a generating set and with $W$ as
 a faithful $\phi$-coordinated
  quasi module on which  $Y_W(a(z),z_0)=a(z_0)$ for $a(z)\in\langle S\rangle_{\phi}$.
  Furthermore, if $S$ is local, then $W$ is a $\phi$-coordinated
$\langle S\rangle_{\phi}$-module.
\end{thm}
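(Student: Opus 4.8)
The plan is to assemble the statement from the structural results of Sections 2--3. First I would check that $\langle S\rangle_\phi$ is well defined. Spans of quasi-local sets remain quasi-local (if $q_1[a_1,b]=0$ and $q_2[a_2,b]=0$ then $q_1q_2[a_1+a_2,b]=0$), and Proposition \ref{prop:n-product-local} guarantees that the products $a(z)_n^\phi b(z)$ of quasi-local elements stay quasi-local against every further element. Hence, starting from $\mathrm{span}(S\cup\{1_W\})$ and iterating the $Y_\E^\phi$-products, one obtains the smallest $Y_\E^\phi$-closed quasi-local subspace $\langle S\rangle_\phi$ containing $S\cup\{1_W\}$; since every pair in it is quasi-local, each product $Y_\E^\phi(a(z),z_0)b(z)$ occurring below is well defined by Definition \ref{de:yephi}.

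Next I would install the (nonlocal) vertex algebra axioms with vacuum $1_W$. The identity property $Y_\E^\phi(1_W,z_0)=\mathrm{id}$ and the creation property are immediate from Definition \ref{de:yephi}: taking $q=1$ for the pair $(a(z),1_W)$ gives $Y_\E^\phi(a(z),z_0)1_W=a(\phi(z,z_0))\in\langle S\rangle_\phi[[z_0]]$, which equals $a(z)$ at $z_0=0$ and exhibits the translation operator as $p(z)\frac{d}{dz}$, the $\phi$-analogue of \eqref{dpzddz}. The substantial axiom is weak associativity for the $Y_\E^\phi$-products; this is precisely the output of the general $\phi$-coordinated construction for nonlocal vertex algebras of \cite{JKLT}, whose quasi-compatibility hypothesis is implied by quasi-locality and whose analytic engine is Lemma \ref{lem:formal-jocobi}. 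Invoking it, $(\langle S\rangle_\phi,Y_\E^\phi,1_W)$ is a nonlocal vertex algebra and $W$ is a $\phi$-coordinated quasi module via $Y_W(a(z),z_0)=a(z_0)$; faithfulness is immediate, since $a(z_0)=0$ forces $a(z)=0$.

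The decisive step is to promote this nonlocal vertex algebra to a vertex algebra by verifying weak commutativity among the generated fields. Given $a(z),b(z)\in\langle S\rangle_\phi$, quasi-locality supplies $f(z_1,z_2)[a(z_1),b(z_2)]=0$, and Proposition \ref{prop:quasilocaltolocal} then yields $(z_1-z_2)^k[Y_\E^\phi(a(z),z_1),Y_\E^\phi(b(z),z_2)]=0$ with $k$ the order of the zero of $f$ at $z_1=z_2$. Thus all generated fields are mutually local, and by the standard criterion that a nonlocal vertex algebra satisfying weak commutativity (with vacuum and weak associativity in place) is a vertex algebra, $\langle S\rangle_\phi$ is a vertex algebra; it is generated by $S$ by construction. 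I expect this passage from quasi-local brackets to honest $(z_1-z_2)^k$-locality of the iterated fields to be the conceptual crux, even though the hard calculation is already packaged into Proposition \ref{prop:quasilocaltolocal}.

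Finally, for the local case I would show that $\langle S\rangle_\phi$ is itself local. Re-reading Proposition \ref{prop:n-product-local} with $f,g,h$ taken to be powers of $(z_1-z_2)$, the polynomial annihilating $[a(z)_n^\phi b(z),c(z)]$ is $g^kh$, again a power of $(z_1-z_2)$; together with the span argument this shows locality is preserved under the generation, so every pair in $\langle S\rangle_\phi$ is local. Consequently $(z_1-z_2)^k a(z_1)b(z_2)\in\Hom(W,W((z_1,z_2)))$ for all $a,b\in\langle S\rangle_\phi$, which is exactly the module (rather than merely quasi-module) compatibility of Definition \ref{de:phicoor}, with accompanying iterate identity \eqref{ywhom}. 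Hence $W$ is a $\phi$-coordinated $\langle S\rangle_\phi$-module.
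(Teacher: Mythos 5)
Your proposal is correct and follows essentially the same route as the paper: both rest on the externally cited construction theorem (the paper invokes \cite{Li2}, you invoke \cite{JKLT}, which packages the same machinery) to get the nonlocal vertex algebra structure on $\langle S\rangle_{\phi}$ with $W$ as a faithful $\phi$-coordinated quasi module, and both use Proposition \ref{prop:quasilocaltolocal} as the crux to obtain $(z_1-z_2)^k$-locality of the fields $Y_\E^\phi(a(z),z_0)$ and hence a genuine vertex algebra. Your explicit handling of the local case via Proposition \ref{prop:n-product-local} is a detail the paper delegates to \cite{Li2}, but it is the same argument in substance.
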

\begin{proof}
It suffices to prove the case that $S$ is quasi local. Recall from \cite{Li2} that $\langle S\rangle_{\phi}$ is a
nonlocal vertex algebra with $W$ as a faithful $\phi$-coordinated quasi
$\langle S\rangle_{\phi}$-module.
On the other hand, it follows from Proposition \ref{prop:quasilocaltolocal} that
\[\{Y_{\E}^\phi(a(z),z_0)\mid a(z)\in\langle S\rangle_{\phi}\}\]
is a local subset of $\E(\langle S\rangle_{\phi})$.
This implies that $\<S\>_\phi$ is indeed a vertex algebra.
\end{proof}

\section{$\phi$-coordinated $V_{\CC}$-modules}

In this section, we first recall the notion of a conformal algebra $\CC$ \cite{K} (also known as
a vertex Lie algebra in \cite{DLM,P}) and then determine the $\phi$-coordinated modules for the universal enveloping
vertex algebra of $\CC$ in terms of restricted modules of a Lie algebra $\wh{\CC}_\phi$ arising from $\CC$ and $\phi$.

\begin{dfnt}
{\em A {\em conformal algebra} $\CC$ is a  $\C[\partial]$-module with a $\C$-bilinear product $a_{n}b$ for each $n\in\N$ such that the following axioms hold $(a,b,c\in \CC,\ m,n\in\N)$:
\begin{eqnarray*}
\te{(C0)}&\qquad&a_{n}b=0\quad\te{for }n\gg0,\\
\te{(C1)}&\qquad&(\partial a)_{n}b=-na_{n-1}b,\\
\te{(C2)}&\qquad&a_{n}b=-\sum_{j=0}^{\infty}(-1)^{j+n}\partial^{(j)}(b_{n+j}a),\\
\te{(C3)}&\qquad&a_{m}(b_{n}c)=\sum_{j=0}^{m}\binom{m}{j}(a_{j}b)_{m+n-j}c+b_{n}(a_{m}c).
\end{eqnarray*}
 A linear map  $\varphi:\CC\rightarrow\CC'$ of conformal algebras is called
 a \emph{homomorphism} if
 $$\varphi\partial=\partial\varphi\quad\te{and}\quad \varphi(a_{{n}}b)=\varphi(a)_{n}\varphi(b)\quad\te{ for }a,b\in\CC,\ n\in\N.$$
}
\end{dfnt}

Let $\CC$ be a conformal algebra. Recall that $\phi$ is an associate of $F(x,y)$, which is determined by a nonzero  $p(x)\in \C((x))$.
 We define a multiplication on the formal loop space $\CC\ot \C((x))$ by
 \begin{eqnarray}\label{lb-C_p0}
 [a\ot f(x),b\ot g(x)]=\sum_{i\geq0}(a_ib)\ot \left(\left(p(x)\frac{d}{d x}\right)^{(i)}f(x)\right)g(x)
\end{eqnarray}
for  $a,b\in\CC$ and $f(x),g(x)\in\C((x))$.
Form the quotient space
\begin{align*}
\wh{\CC}_\phi:=\CC\ot \C((x))/\mathrm{Im}\, (\partial \ot 1+1\ot p(x)\frac{d}{d x}).
\end{align*}
It was known from \cite{K} that the multiplication \eqref{lb-C_p0} defines a Lie algebra structure on the quotient space $\wh{\CC}_\phi$.
When $\phi=x+z$, i.e., $p(x)=1$, we also write
$\wh{\CC}=\wh{\CC}_\phi.$
For $a\in\CC$ and $f(x)\in\C((x))$, we set
\[\overline{a\ot f(x)}=\rho(a\ot f(x)),\] where $\rho:
\CC\ot \C((x))\rightarrow \wh{\CC}_\phi$ is the quotient map.

For $a\in\CC$, we set
\[a_{\phi}(z)=\sum_{n\in\Z}\overline{a\ot x^np(x)}z^{-n-1}\in \wh{\CC}_\phi[[z,z^{-1}]].\]

\begin{dfnt}\label{de:resccmod}{\em A $\widehat{\CC}_{\phi}$-module $W$ is called  {\em restricted} if  $a_{\phi}(z)\in \E(W)$ for any $a\in\CC$.
 }
\end{dfnt}

One notices from definition that
\begin{align}\label{eq:deraphiz}
\(p(z)\frac{d}{d z}\)a_\phi(z)=(\partial a)_\phi(z),\quad a\in \CC.
\end{align}
Furthermore, in view of \eqref{lb-C_p0}, we have for $a,b\in \CC$,
\begin{eqnarray}\label{lb-C_p}
[a_{\phi}(z),b_{\phi}(w)]=\sum_{i\geq0}(a_ib)_{\phi}(w) \left(p(w)\frac{\partial}{\partial w}\right)^{(i)}\bar p(z)\delta\left(\frac{w}{z}\right).
\end{eqnarray}
We have the following useful lemma.

\begin{lemt}\label{lem:resmod} Let $W$ be a vector space and $\CC$ be a conformal algebra. Assume that there is a linear map
 \[\pi:\CC\rightarrow \E(W),\quad a\mapsto a(z)=\sum_{n\in \Z}a_{[n]} z^{-n-1},\] such that for $a\in \CC$,
 \begin{align}\label{eq:derphiz}
  \(p(z)\frac{d}{d z}\)a(z)=(\partial a)(z),
  \end{align}
and  for $a,b\in \CC$,
\begin{eqnarray}\label{lb-C}
[a(z),b(w)]=\sum_{i\geq0}(a_ib)(w) \left(p(w)\frac{\partial}{\partial_w}\right)^{(i)}\bar p(z)\delta\left(\frac{w}{z}\right).
\end{eqnarray}
Then there is a restricted $\wh{\CC}_\phi$-module structure on $W$ with $a_\phi(z)=a(z)$ for $a\in \CC$.
\end{lemt}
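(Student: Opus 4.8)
The plan is to construct a Lie algebra homomorphism $\rho\colon\wh{\CC}_\phi\to\End W$ out of the fields $a(z)$ and to read off the restricted module structure from it. First I would define a $\C$-bilinear map on the loop space $\CC\ot\C((x))$ by
\[\rho_0(a\ot f(x))=\Res_z\frac{f(z)}{p(z)}\,a(z),\qquad a\in\CC,\ f(x)\in\C((x)).\]
This makes sense because $p(x)$ is invertible in the field $\C((x))$, so $f(z)/p(z)\in\C((z))$, and because $a(z)\in\E(W)$ gives $a(z)w\in W((z))$ for each $w$; hence $\frac{f(z)}{p(z)}a(z)w\in W((z))$ has a well-defined residue in $W$. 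A direct computation gives $\rho_0(a\ot x^np(x))=\Res_z z^n a(z)=a_{[n]}$, the mode we wish to attach to $\overline{a\ot x^np(x)}$.

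Next I would check that $\rho_0$ annihilates $\mathrm{Im}(\partial\ot 1+1\ot p(x)\tfrac{d}{dx})$, so that it descends to a linear map $\rho\colon\wh{\CC}_\phi\to\End W$ with $\rho(\overline{a\ot f})=\rho_0(a\ot f)$. Using the hypothesis \eqref{eq:derphiz} in the form $(\partial a)(z)=p(z)\tfrac{d}{dz}a(z)$, one finds $\rho_0((\partial a)\ot f)=\Res_z f(z)\tfrac{d}{dz}a(z)$ and $\rho_0(a\ot p(x)f'(x))=\Res_z f'(z)a(z)$, whose sum is $\Res_z\tfrac{d}{dz}\left(f(z)a(z)\right)=0$ since the residue of a total derivative vanishes.

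The heart of the argument is the homomorphism property $\rho([u,v])=[\rho(u),\rho(v)]$ for $u=\overline{a\ot f}$ and $v=\overline{b\ot g}$. Writing the operator commutator as an iterated residue of the coefficientwise commutator and inserting \eqref{lb-C} gives
\[ [\rho(u),\rho(v)]=\Res_{z_1}\Res_{z_2}\frac{f(z_1)}{p(z_1)}\frac{g(z_2)}{p(z_2)}\sum_{i\ge0}(a_ib)(z_2)\left(p(z_2)\frac{\partial}{\partial z_2}\right)^{(i)}\bar p(z_1)\delta\left(\frac{z_2}{z_1}\right). \]
Here the product $\tfrac{f(z_1)}{p(z_1)}\bar p(z_1)=f(z_1)z_1^{-1}$ simplifies, and since $\Res_{z_1}$ commutes with $\left(p(z_2)\frac{\partial}{\partial z_2}\right)^{(i)}$ one has $\Res_{z_1}f(z_1)z_1^{-1}\delta(z_2/z_1)=f(z_2)$, so the $z_1$-integration turns the $i$-th summand into $(a_ib)(z_2)\left(p(z_2)\frac{\partial}{\partial z_2}\right)^{(i)}f(z_2)$ weighted by $\tfrac{g(z_2)}{p(z_2)}$. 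On the other hand, applying $\rho_0$ termwise to the loop bracket \eqref{lb-C_p0} for $[a\ot f,b\ot g]$ produces precisely $\Res_{z_2}\tfrac{g(z_2)}{p(z_2)}\sum_{i\ge0}\left(\left(p(z_2)\frac{\partial}{\partial z_2}\right)^{(i)}f(z_2)\right)(a_ib)(z_2)$, the same expression; all sums over $i$ are finite by axiom (C0).

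The one delicate point is the very first manipulation above: passing from the genuine operator commutator $\rho(u)\rho(v)-\rho(v)\rho(u)$ to $\Res_{z_1}\Res_{z_2}\tfrac{f(z_1)}{p(z_1)}\tfrac{g(z_2)}{p(z_2)}[a(z_1),b(z_2)]$, where now $[a(z_1),b(z_2)]$ denotes the coefficientwise commutator in $\End(W)[[z_1^{\pm1},z_2^{\pm1}]]$. This is justified because each $\rho(u)$ acts on a fixed $w\in W$ through only finitely many modes $a_{[n]}$ — a consequence of $a(z)w\in W((z))$ — so $\rho(u)\rho(v)w$ and $\rho(v)\rho(u)w$ are finite sums of operators $a_{[n]}b_{[m]}$ whose difference is exactly the residue of the formal identity \eqref{lb-C} applied coefficientwise, with no reordering of infinite sums required. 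Granting this, $\rho$ is a Lie algebra homomorphism and $W$ becomes a $\wh{\CC}_\phi$-module; finally, by construction $a_\phi(z)=\sum_{n\in\Z}\rho(\overline{a\ot x^np(x)})z^{-n-1}=\sum_{n\in\Z}a_{[n]}z^{-n-1}=a(z)\in\E(W)$, so the module is restricted with $a_\phi(z)=a(z)$, as desired.
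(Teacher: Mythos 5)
Your proposal is correct and is essentially the paper's own argument: your residue formula $\Res_z\frac{f(z)}{p(z)}\,a(z)$ is exactly the paper's definition \eqref{modact} of the action of $\overline{a\ot f(x)}$ (expanding $p(x)^{-1}f(x)=\sum_n b_nx^n$ and acting by the finite sum $\sum_n b_na_{[n]}$), and your two verifications---killing $\mathrm{Im}\,(\partial\ot 1+1\ot p(x)\frac{d}{dx})$ via \eqref{eq:derphiz} and matching the bracket by comparing \eqref{lb-C_p0} with \eqref{lb-C}---are precisely the steps the paper states and leaves to the reader. You have simply written out in full the delta-function and finiteness details that the paper's sketch omits.
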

\begin{proof} Let $a\in \CC$ and $f(x)\in \C((x))$.
Set
\begin{align*}
f_0(x)=p(x)^{-1}f(x)=\sum_{n\ge n_0} b_n x^n,
\end{align*}
where $n_0\in\Z$.
We define an action of $\overline{a\ot f(x)}$ on $W$ by
\begin{align}\label{modact}
\overline{a\ot f(x)}w=\sum_{n=n_0}^{n_{a,w}} b_n a_{[n]}w,\quad w\in W,
\end{align}
where $n_{a,w}$ is a sufficiently large integer such that $a_{[m]}w=0$ for all $m>n_{a,w}$.
It follows from \eqref{eq:derphiz} that this action is well-defined.
Furthermore, by comparing \eqref{lb-C_p}  with \eqref{lb-C}, one can check that \eqref{modact} defines a restricted $\wh{\CC}_\phi$-module structure on $W$
with $a_\phi(z)=a(z)$ for $a\in \CC$.
\end{proof}

Now we recall the notion of universal enveloping  vertex algebra of a conformal algebra $\CC$.
Set
$
\wh{\CC}_+=\rho(\CC\ot \C[[x]]),
$
which is a Lie subalgebra of $\wh{\CC}$.
Let $\C$ be the one-dimensional trivial  $\widehat{\CC}_{+}$-module. Then we have the induced $\widehat{\CC}$-module
$$V_{\CC}=\mathcal{U}(\widehat{\CC})\otimes_{\mathcal{U}(\widehat{\CC}_{+})}\C.$$
Set $\vac=1\otimes 1$. It is known that $\CC$ can be identified as a subspace of $V_{\CC}$ through the linear map
\[a\mapsto \overline{a\ot x^{-1}}\vac,\quad a\in \CC.\]

Note that each vertex algebra is naturally a conformal algebra by setting $\partial=\mathcal{D}$ and forgetting
the $n$-products with $n<0$.
The following result is well-known (cf. \cite{K,DLM,P,FB}).
\begin{prpt}\label{prop-CAhom-VAhom} There is a  vertex algebra structure on $V_{\CC}$ such that $\vac$ is the vacuum vector and that
\[Y(a,z)=a(z)=\sum_{n\in \Z} \overline{a\ot x^n}z^{-n-1}\] for $a\in \CC $.
Moreover, for any vertex algebra $V$, if $\varphi:\CC\rightarrow V$ is a homomorphism of conformal algebras, then $\varphi$ extends uniquely to a vertex algebra homomorphism from $V_{\CC}$ to $V$.
\end{prpt}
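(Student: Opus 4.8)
The plan is to realize the desired structure on $V_\CC$ by identifying it with the vertex algebra generated by the fields $a(z)$ inside $\E(V_\CC)$, and then to deduce the universal property from the universal property of the induced $\wh\CC$-module $V_\CC$. First I would record that $V_\CC$ is a restricted $\wh\CC$-module: for a spanning vector $\overline{a^1\ot x^{-n_1-1}}\cdots\overline{a^k\ot x^{-n_k-1}}\vac$ one commutes $\overline{a\ot x^m}$ to the right using \eqref{lb-C_p} with $p(x)=1$, and since each bracket replaces a factor by one of the form $\overline{(a_i a^j)\ot x^{m'}}$ with $m'$ bounded below, for $m$ large every resulting term ends in some $\overline{a'\ot x^{m''}}\vac$ with $m''\ge 0$, hence vanishes. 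Thus $a(z)\in\E(V_\CC)$ for every $a\in\CC$. Specializing \eqref{lb-C_p} to $p(x)=1$ gives
\[
[a(z_1),b(z_2)]=\sum_{i\ge 0}(a_ib)(z_2)\left(\frac{\partial}{\partial z_2}\right)^{(i)}z_1^{-1}\delta\left(\frac{z_2}{z_1}\right),
\]
a finite sum by axiom (C0), so $(z_1-z_2)^N[a(z_1),b(z_2)]=0$ for $N\gg 0$; hence $S=\{a(z)\mid a\in\CC\}$ is a local subset of $\E(V_\CC)$. By Theorem \ref{thm-(quasi)localset-va} applied to $\phi=x+z$, the space $U:=\<S\>_\phi$ is a vertex algebra and $V_\CC$ is a faithful $U$-module with $Y_{V_\CC}(\alpha,z_0)=\alpha(z_0)$.

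Next I would transport this structure to $V_\CC$ along the evaluation map $\theta:U\to V_\CC$, $\theta(\alpha)=\alpha(z)\vac|_{z=0}$. One checks directly that $\theta(1_{V_\CC})=\vac$ and $\theta(a(z))=\overline{a\ot x^{-1}}\vac=a$. The vector $\vac$ is vacuum-like for the $U$-module $V_\CC$, i.e.\ $\alpha(z)\vac\in V_\CC[[z]]$ for all $\alpha\in U$: this holds for the generators $a(z)$ and propagates to all of $U$ by induction on $Y_\E^\phi$-products using \eqref{ywhom}. Consequently $\theta$ is a homomorphism from the regular $U$-module to $V_\CC$. It is surjective since $\vac$ is a cyclic vector. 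Granting that $\theta$ is also injective, transporting the vertex algebra structure of $U$ produces a vertex algebra $(V_\CC,Y,\vac)$; because $\theta^{-1}(a)=a(z)$ and $\theta$ intertwines the two $U$-actions, the transported operator satisfies $Y(a,z)=a(z)$ and $Y(\vac,z)=1_{V_\CC}$, as required.

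For the universal property, given a conformal algebra homomorphism $\varphi:\CC\to V$, I would apply Borcherds' commutator formula in $V$ (the $p(x)=1$ case of Proposition \ref{prop-module-comut} for the adjoint module) together with $\varphi(a)_i\varphi(b)=\varphi(a_ib)$ and $\mathcal{D}\varphi(a)=\varphi(\partial a)$ to see that the assignment $a\mapsto Y_V(\varphi(a),z)$ satisfies \eqref{eq:derphiz} and \eqref{lb-C} with $p(x)=1$. Lemma \ref{lem:resmod} then makes $V$ a restricted $\wh\CC$-module in which $\vac_V$ is annihilated by $\wh\CC_+$, so the universal property of the induced module $V_\CC=\U(\wh\CC)\ot_{\U(\wh\CC_+)}\C$ yields a unique $\wh\CC$-module map $\Phi:V_\CC\to V$ with $\Phi(\vac)=\vac_V$. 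Since $\Phi$ intertwines the $\wh\CC$-actions, the identity $\Phi Y(a,z)=Y_V(\Phi a,z)\Phi$ holds on the generating set $\CC$, and it extends to all of $V_\CC$ by generation; uniqueness is forced because $\CC$ generates $V_\CC$ as a vertex algebra.

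The one genuinely substantial step, and the main obstacle, is the injectivity of $\theta$ — equivalently, the assertion that the generated vertex algebra $\<S\>_\phi$ is no larger than $V_\CC$. I expect this to rest not on formal manipulation but on a careful PBW-type comparison: matching the spanning set of $V_\CC$ by ordered monomials $\overline{a^1\ot x^{-n_1-1}}\cdots\overline{a^k\ot x^{-n_k-1}}\vac$ against the spanning set of $U$ by iterated $Y_\E^\phi$-products of the $a(z)$, and showing that the relations (C1)--(C3), which govern both the Lie bracket of $\wh\CC$ and the $\phi$-products in $\E(V_\CC)$, force no additional collapse on the $U$ side beyond those already present in the induced module. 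Everything else in the argument is either a direct computation or an application of the results established earlier in the paper.
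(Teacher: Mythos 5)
Your overall route---generating a vertex algebra $U=\langle S\rangle_\phi$ (with $\phi=x+z$, $S=\{a(z)\mid a\in\CC\}$) inside $\E(V_\CC)$ via Theorem \ref{thm-(quasi)localset-va}, transporting the structure back along the evaluation map $\theta(\alpha)=\alpha(z)\vac|_{z=0}$, and deriving the universal property from the universal property of the induced $\wh\CC$-module---is exactly the standard one in the literature that the paper cites (the paper itself gives no proof, only references), and the universal-property half of your argument is sound modulo routine propagation lemmas. However, there is a genuine gap, which you flag yourself: the injectivity of $\theta$ is assumed and never proved. Without it you have only produced a vertex algebra $U$ acting faithfully on $V_\CC$; you have not produced a vertex algebra structure on $V_\CC$ itself with $Y(a,z)=a(z)$, which is the actual assertion.

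Moreover, your diagnosis of how to close the gap---a ``PBW-type comparison'' of spanning sets---points in the wrong direction: the iterated $Y_\E^\phi$-products spanning $U$ carry no evident linear-independence structure, so matching spanning sets cannot by itself yield injectivity. The standard argument is purely formal. Equip $V_\CC$ with the translation operator $D$ induced by $\partial\ot 1$ (so $D\vac=0$ and $[D,\overline{a\ot x^{n}}]=-n\,\overline{a\ot x^{n-1}}$, i.e.\ $[D,a(z)]=\frac{d}{dz}a(z)$); an induction over the $\phi$-products shows $[D,Y_{V_\CC}(\alpha,z)]=\frac{d}{dz}Y_{V_\CC}(\alpha,z)$ for every $\alpha\in U$, whence $\alpha(z)\vac=e^{zD}\theta(\alpha)$. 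Thus $\theta(\alpha)=0$ forces $\alpha(z)\vac=0$. Locality in $U$ then gives $(z_1-z_2)^{N}\alpha(z_1)\beta(z_2)\vac=(z_1-z_2)^{N}\beta(z_2)\alpha(z_1)\vac=0$ for $N\gg0$; since $\alpha(z_1)\beta(z_2)\vac\in V_\CC((z_1))[[z_2]]$ and $(z_1-z_2)$ is invertible in $\C((z_1))[[z_2]]$, one may cancel $(z_1-z_2)^{N}$ and set $z_2=0$ to obtain $\alpha(z_1)\theta(\beta)=0$ for all $\beta\in U$. Surjectivity of $\theta$ (which you correctly get from cyclicity) and faithfulness of $V_\CC$ as a $U$-module now force $\alpha=0$. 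With this step supplied, $\theta$ is a $U$-module isomorphism, the transported structure automatically satisfies $Y(a,z)=a(z)$, and your proof becomes complete and coincides with the standard one.
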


The following is the main result of this section.

\begin{thm}\label{thm:re-m}
 Let $\CC$ be a conformal algebra. Then any restricted $\widehat{\CC}_{\phi}$-module $W$ is
  naturally a $\phi$-coordinated $V_{\CC}$-module with $Y_{W}(a,z)=a_{\phi}(z)$ for $a\in\CC$.
   On the other hand, any $\phi$-coordinated $V_{\CC}$-module $W$ is naturally
 a restricted $\widehat{\CC}_{\phi}$-module with $a_{\phi}(z)=Y_{W}(a,z)$ for $ a\in\CC.$
\end{thm}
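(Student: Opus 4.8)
The plan is to prove the two directions separately, using Lemma~\ref{lem:resmod} and Proposition~\ref{prop-module-comut} as the main engines, together with the identifications coming from Proposition~\ref{prop-CAhom-VAhom}. First I would handle the forward direction: given a restricted $\widehat{\CC}_\phi$-module $W$, I want to produce a $\phi$-coordinated $V_\CC$-module structure. Since $\CC$ is naturally a conformal algebra and $\CC\hookrightarrow V_\CC$ via $a\mapsto \overline{a\ot x^{-1}}\vac$, and $V_\CC$ is generated by $\CC$ as a vertex algebra, the natural strategy is to define $Y_W(a,z)=a_\phi(z)$ on generators and check that the defining relations of a $\phi$-coordinated module are satisfied, then extend. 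Concretely, the map $\pi:\CC\to\E(W)$, $a\mapsto a_\phi(z)$, already satisfies the hypotheses \eqref{eq:derphiz} and \eqref{lb-C} of Lemma~\ref{lem:resmod} by the identities \eqref{eq:deraphiz} and \eqref{lb-C_p} recorded before that lemma. In particular the image $\{a_\phi(z)\mid a\in\CC\}$ is a local (indeed quasi-local, but in fact local since the delta expansion in \eqref{lb-C_p} forces locality via Lemma~\ref{lem:ab-k}) subset of $\E(W)$, so by Theorem~\ref{thm-(quasi)localset-va} it generates a vertex algebra $\langle S\rangle_\phi$ with $W$ a $\phi$-coordinated module. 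The assignment $a\mapsto a_\phi(z)$ is then a conformal algebra homomorphism $\CC\to\langle S\rangle_\phi$, which by the universal property in Proposition~\ref{prop-CAhom-VAhom} extends to a vertex algebra homomorphism $V_\CC\to\langle S\rangle_\phi$; composing with the $\phi$-coordinated action of $\langle S\rangle_\phi$ on $W$ gives the desired $\phi$-coordinated $V_\CC$-module structure with $Y_W(a,z)=a_\phi(z)$.

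For the converse, I would start from a $\phi$-coordinated $V_\CC$-module $(W,Y_W)$ and set $a(z):=Y_W(a,z)$ for $a\in\CC\subset V_\CC$. The goal is to verify the hypotheses of Lemma~\ref{lem:resmod}, which will then manufacture the restricted $\widehat{\CC}_\phi$-module structure. The relation \eqref{eq:derphiz} follows from \eqref{dpzddz} of Lemma~\ref{lem:Dpz} once I check that under the embedding $\CC\hookrightarrow V_\CC$ the conformal-algebra operator $\partial$ corresponds to $\mathcal{D}$; this is part of the construction of $V_\CC$ and should be a short verification. The commutator relation \eqref{lb-C} is exactly the Borcherds-type commutator formula of Proposition~\ref{prop-module-comut}, applied to $u,v\in\CC$: there $[Y_W(u,z_1),Y_W(v,z_2)]$ is expressed as $\sum_{j\ge0}Y_W(u_jv,z_2)(p(z_2)\partial_{z_2})^{(j)}\bar p(z_1)\delta(z_2/z_1)$, and since the $j$-th products $u_jv$ for $j\ge0$ computed inside $V_\CC$ agree with the conformal-algebra products $a_jb$ (again by the construction of $V_\CC$), this is precisely \eqref{lb-C}. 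Lemma~\ref{lem:resmod} then endows $W$ with a restricted $\widehat{\CC}_\phi$-module structure satisfying $a_\phi(z)=Y_W(a,z)$, as required.

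The two constructions are mutually inverse essentially by design, since both directions pin down the action through the generating correspondence $a\leftrightarrow a_\phi(z)=Y_W(a,z)$; I would note this briefly rather than belabor it. The main obstacle I anticipate is bookkeeping at the interface between the conformal-algebra structure and the vertex-algebra structure on $V_\CC$: one must be careful that the symbol $a_jb$ appearing in axiom (C3) and in \eqref{lb-C_p} is literally the same element of $V_\CC$ as the vertex-algebra product $u_jv$ (for $j\ge0$) appearing in Proposition~\ref{prop-module-comut}, and that $\partial$ matches $\mathcal{D}$. Both facts are built into Proposition~\ref{prop-CAhom-VAhom} and the standard construction of the universal enveloping vertex algebra, so the argument is genuinely about correctly invoking these compatibilities rather than any new computation. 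A secondary point requiring a word of care is confirming that the locality coming from \eqref{lb-C_p} is of the sharp $(z_1-z_2)^k$ form needed to land in the \emph{module} (not merely quasi-module) setting of Theorem~\ref{thm-(quasi)localset-va}; this follows because the delta function in the commutator is supported at $\lambda=1$, so Lemma~\ref{lem:ab-k} gives locality with $q(z)=(z-1)^k$.
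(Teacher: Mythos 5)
Your overall architecture is the same as the paper's (Theorem \ref{thm-(quasi)localset-va}, Proposition \ref{prop-CAhom-VAhom}, Proposition \ref{prop-module-comut}, Lemma \ref{lem:resmod}), and your converse direction---derivative property from \eqref{dpzddz}, commutator from Proposition \ref{prop-module-comut}, then Lemma \ref{lem:resmod}---is exactly the paper's proof. But the forward direction has a genuine gap at its central step. After invoking Theorem \ref{thm-(quasi)localset-va}, you write that the assignment $\varphi\colon a\mapsto a_\phi(z)$ ``is then a conformal algebra homomorphism $\CC\to\langle S\rangle_\phi$,'' yet nothing said up to that point justifies this. The $n$-products in $\langle S\rangle_\phi$ are the $Y_\E^\phi$-products of fields on $W$, so the required identity
\begin{align*}
a_\phi(z)_{i}^{\phi}\,b_\phi(z)=(a_ib)_\phi(z),\qquad a,b\in\CC,\ i\in\N,
\end{align*}
is not definitional but is a claim that needs proof; it is precisely \eqref{eq:aphizhom} in the paper. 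The paper proves it by using that $W$ is a \emph{faithful} $\phi$-coordinated $\langle S\rangle_\phi$-module (this is where faithfulness in Theorem \ref{thm-(quasi)localset-va} earns its keep, and you never mention it): one computes $[Y_W(a_\phi,z_1),Y_W(b_\phi,z_2)]=[a_\phi(z_1),b_\phi(z_2)]$ in two ways---via Proposition \ref{prop-module-comut} applied to the module $W$ over $\langle S\rangle_\phi$, giving coefficients $a_\phi(z)_i^\phi b_\phi(z)$, and via the Lie bracket \eqref{lb-C_p}, giving coefficients $(a_ib)_\phi(z)$---and then the uniqueness half of Proposition \ref{prop-module-comut} (i.e.\ Lemma \ref{delta-2}) together with faithfulness forces the coefficients to agree. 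The same mechanism ($Y_W(b(z),z_0)=b(z_0)$ combined with \eqref{dpzddz} and \eqref{eq:deraphiz}) is what gives $\D\varphi(a)=\varphi(\partial a)$. Without these verifications, Proposition \ref{prop-CAhom-VAhom} cannot be invoked.

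Relatedly, your closing paragraph locates the ``main obstacle'' in the wrong place. The compatibility you flag---that in $V_\CC$ the vertex-algebra products $u_jv$ ($j\ge0$) of elements of $\CC$ agree with the conformal products, and $\mathcal{D}$ with $\partial$---is indeed built into the construction of $V_\CC$, and it is what the \emph{converse} direction needs. The crux of the forward direction is different: it concerns products inside $\langle S\rangle_\phi\subset\E(W)$, which no universal construction hands you for free; that is exactly the faithfulness/uniqueness argument above. Two minor points: your parenthetical derivation of locality of $S$ from \eqref{lb-C_p} via Lemma \ref{lem:ab-k} is correct (the paper uses the same fact, resting on Lemma \ref{delta-1}); and your appeal to Lemma \ref{lem:resmod} in the forward direction is superfluous, since there one already starts with a restricted $\widehat{\CC}_\phi$-module---that lemma is only needed in the converse direction.
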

\begin{proof} Let $W$ be a restricted $\widehat{\CC}_{\phi}$-module. Then it follows from \eqref{lb-C_p}
that \[S=\{a_{\phi}(z)\mid a\in\CC\}\] is a local subset of $\mathcal{E}(W)$. In view of
Theorem \ref{thm-(quasi)localset-va},  there is a vertex algebra $\< S\>_{\phi}$ with $W$ as a faithful $\phi$-coordinated $\< S\>_{\phi}$-module
on which $Y_{W}(b(z),z_0)=b(z_0)$ for $b(z)\in \< S\>_{\phi}$.
This together with \eqref{lb-C_p} implies that for $a,b\in \CC$,
\begin{eqnarray*}
&&[Y_{W}(a_\phi(z_0),z),Y_{W}(b_\phi(z_0),w)]=[a_{\phi}(z),b_{\phi}(w)]\nonumber\\
&=&\sum_{i\geq0}(a_ib)_{\phi}(w) \left(p(w)\frac{\partial}{\partial w}\right)^{(i)}\bar p(z)\delta\left(\frac{w}{z}\right).
\end{eqnarray*}
As $W$ is faithful, it follows from Proposition \ref{prop-module-comut}  that
\begin{align}\label{eq:aphizhom}
a_\phi(z)_{i}^\phi b_\phi(z)=(a_ib)_\phi(z),\quad a,b\in \CC, i\in \N.
\end{align}

 We define a linear map $$\varphi:\ \CC\rightarrow \langle S\rangle_{\phi},\quad a\mapsto a_{\phi}(z).$$
It follows from  \eqref{dpzddz} and
\eqref{eq:deraphiz} that for $a\in \CC$,
\[\D\varphi(a)=\left(p(z)\frac{d}{d z}\right)a_{\phi}(z)=(\partial a)_{\phi}(z)=\varphi\partial(a).\]
 This together with \eqref{eq:aphizhom} implies that  $\varphi$ is a homomorphism of conformal algebras.
Then Proposition \ref{prop-CAhom-VAhom} says that $\varphi$ can be extended to a vertex algebra homomorphism from $V_{\CC}$ to $\langle S\rangle_{\phi}$.
Thus, $W$ becomes a $\phi$-coordinated $V_{\CC}$-module with
  \[Y_{W}(a,w)=Y_W(\varphi(a),w)=Y_W(a_\phi(z),w)=
  a_{\phi}(w)\quad \te{for}\ a\in \CC.\]

On the other hand, let $W$ be a $\phi$-coordinated $V_{\CC}$-module.
Recall from \eqref{dpzddz} that for $a\in \CC$,
\begin{align*}
Y_W(\partial a,z)=Y_W(\mathcal{D} a,z)=\left(p(z)\frac{d}{d z}\right) Y_W(a,z),
\end{align*}
and recall from  Proposition \ref{prop-module-comut} that for $a,b\in \CC$,
\begin{equation*}\begin{split}
[Y_{W}(a,z),Y_{W}(b,w)]=\sum_{i\geq0}Y_W(a_ib,w) \left(p(w)\frac{\partial}{\partial_w}\right)^{(i)}\bar p(z)\delta\left(\frac{w}{z}\right).
\end{split}\end{equation*}
Combining this with Lemma \ref{lem:resmod}, we see that  $W$ is  a restricted $\widehat{\CC}_{\phi}$-module with $a_{\phi}(z)=Y_{W}(a,z)$ for $a\in \CC$,
as desired.
\end{proof}

\begin{remt}
When  $p(x)\in \C[x,x^{-1}]$ is a Laurent polynomial, the Lie algebra  $\wh{\CC}_\phi$ can be defined in the following usual way:
$$\wh{\CC}_\phi:=\CC\ot\C[x,x^{-1}]/\te{Im}(\partial\ot 1+1\ot p(x)\frac{d}{dx}).$$
However, for the case $p(x)\in\C((x))$, the derivation $p(x)\frac{d}{dx}$ may be not well-defined on $\C[x,x^{-1}]$.
So we need to enlarge the usual loop algebra $\CC\ot \C[x,x^{-1}]$ to the formal loop algebra
$\CC\ot \C((x))$.
\end{remt}
\section{$(G,\chi_\phi)$-equivariant $\phi$-coordinated quasi modules}

Let $G$ be a group and let $\chi, \chi_\phi:G\rightarrow \C^\times$ be two linear characters on $G$.
In this section we generalize the previous results on $\phi$-coordinated  modules  for vertex algebras
to the $(G,\chi_\phi)$-equivariant $\phi$-coordinated quasi modules for $(G,\chi)$-vertex algebras.
We start with the notion of $(G,\chi)$-vertex algebra in \cite{JKLT} (cf. \cite{Li3, Li5}).

\begin{dfnt}
{\em  A {\em $(G,\chi)$-vertex algebra}
is a pair $(V,R)$ consisting of a vertex algebra $V$ and a group homomorphism $R:G\rightarrow \mathrm{GL}(V)$ such that $R_g(\vac)=\vac$ and
\[R_gY(v,z)R_g^{-1}=Y(R_g(v),\chi(g)z)\quad\te{for }g\in G,v\in V.\]
For $(G,\chi)$-vertex algebras $(V,R)$, $(V',R')$, a linear map $\psi:V\rightarrow V'$ is called a {\em $(G,\chi)$-vertex algebra homomorphism} if
$\psi$ is a vertex algebra homomorphism and $\psi\circ R_g=R'_g\circ\psi$ for $g\in G$.
}
\end{dfnt}

For a subgroup $\Gamma$ of $\C^\times$, we denote by $\C_\Gamma[x]$ the set of all monic polynomials in $\C[x]$ whose roots
contained in $\Gamma$.
The following notion was introduced in \cite{JKLT}.
\begin{dfnt}
{\em Let $(V,R)$ be a $(G,\chi)$-vertex algebra. A {\em $(G,\chi_\phi)$-equivariant $\phi$-coordinated quasi $V$-module} is a $\phi$-coordinated quasi $V$-module $(W,Y_W)$
such that
  \begin{align}\label{eq:Ywgchiphi}
Y_W(R_gv,z)=Y_W(v,\chi_\phi(g)^{-1}z)\quad\te{for }g\in G,v\in V,\end{align}
and that for $u,v\in V$, there exists $q(z)\in\C_{\chi_\phi(G)}[z]$ such that
\begin{eqnarray}\label{eq:qzquasilocalG}
q(z_1/z_2)Y_W(u,z_1)Y_W(v,z_2)\in\Hom(W,W((z_{1},z_{2}))).
\end{eqnarray}
}
\end{dfnt}

 Let $(V,R)$ be a $(G,\chi)$-vertex algebra and let $(W,Y_W)$ be
 a $(G,\chi_\phi)$-equivariant $\phi$-coordinated quasi $V$-module.
If  $\frac{d}{dz}Y_W(v,z)\neq0$ for some $v\in V$,
 then it was shown in \cite{JKLT} that
\begin{equation}\begin{split}\label{chi-chi_phi}
\phi(x,\chi(g)z)=\chi_\phi(g)\phi(\chi_\phi(g)^{-1}x,z)\quad \te{for }g\in G.
\end{split}\end{equation}
Note that this relation is equivalent to
\begin{equation}\begin{split}\label{chi-chi_phi:p(x)}
p(\chi_\phi(g)x)=\chi(g)^{-1}\chi_\phi(g)p(x)\quad \te{for }g\in G.
\end{split}\end{equation}
On the other hand, if  $\frac{d}{dz}Y_W(v,z)=0$ for any $v\in V$, then the quotient vertex algebra
$V/\ker Y_W$ is simply an associative algebra and $G$ acts trivially on $V/\ker Y_W$.
In view of this, we always assume that the characters $\chi$ and $\chi_\phi$ satisfy the relation \eqref{chi-chi_phi}.
Note that we have $\chi=\chi_\phi$ if $p(x)=1$ and $\chi=1$ if $p(x)=x$.

As  a generalization of Proposition \ref{prop-module-comut}, we have:

\begin{prpt}\label{prop-G-phi-m}Let $(V,R)$ be a $(G,\chi)$-vertex algebra and let $(W,Y_W)$ be a $(G,\chi_\phi)$-equivariant $\phi$-coordinated quasi $V$-module.
Let $\psi:\chi_\phi(G)\rightarrow G$ be a section of $\chi_\phi$.
 Then for $u,v\in V$,
\begin{equation}\label{eq:G-phi-m}\begin{split}
&[Y_W(u,z_1),Y_W(v,z_2)]\\
=\Res_{z_0}&\sum_{g\in \psi(\chi_\phi(G))}Y_W(Y(R_{g^{-1}}u,z_0)v,z_2)e^{z_0p(z_2)\frac{\partial}{\partial z_2}}\left(\chi(g)\bar p(z_1)\delta\left(\frac{\chi_\phi(g)z_2}{z_1}\right)\right).
\end{split}\end{equation}

\end{prpt}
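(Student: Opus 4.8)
The plan is to reduce the general $(G,\chi_\phi)$-equivariant commutator formula to the quasi-local commutator formula already available from Proposition \ref{prop:quasilocalpair}, and then to identify the abstractly produced coefficients $a(\lambda z_2)_j^\phi b(z_2)$ with the module-theoretic data $Y_W(Y(R_{g^{-1}}u,z_0)v,z_2)$. First I would fix $u,v\in V$ and invoke the quasi-locality condition \eqref{eq:qzquasilocalG}: there is some $q(z)\in\C_{\chi_\phi(G)}[z]$ with $q(z_1/z_2)Y_W(u,z_1)Y_W(v,z_2)\in\Hom(W,W((z_1,z_2)))$, whose roots all lie in $\chi_\phi(G)$. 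Applying Lemma \ref{lem:resmod}'s ambient machinery, more precisely Lemma \ref{lem:ab-k} and Proposition \ref{prop:quasilocalpair} to the pair $(a(z),b(z))=(Y_W(u,z),Y_W(v,z))$ in $\E(W)$, yields
\begin{equation*}
[Y_W(u,z_1),Y_W(v,z_2)]=\sum_{\lambda\in\C^\times}\sum_{j\ge0}Y_W(u,z_2)^\phi_{\lambda,j}Y_W(v,z_2)\left(p(z_2)\frac{\partial}{\partial z_2}\right)^{(j)}\bar p(\lambda^{-1}z_1)\delta\left(\frac{\lambda z_2}{z_1}\right),
\end{equation*}
a finite sum in which only roots $\lambda$ of $q$ contribute. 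Since every such $\lambda$ lies in $\chi_\phi(G)$, I can reindex the outer sum over $g\in\psi(\chi_\phi(G))$ by writing $\lambda=\chi_\phi(g)$.

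The central step is then to rewrite the $\phi$-product coefficient $a(\chi_\phi(g)z_2)^\phi_j b(z_2)$ in module-theoretic terms. Using the equivariance relation \eqref{eq:Ywgchiphi} in the form $Y_W(u,\chi_\phi(g)z)=Y_W(R_{g^{-1}}u,z)$, I would replace $a(\chi_\phi(g)z)=Y_W(u,\chi_\phi(g)z)$ by $Y_W(R_{g^{-1}}u,z)$, so that the scaled $\phi$-products become ordinary $\phi$-products $Y_W(R_{g^{-1}}u,z)^\phi_j Y_W(v,z)$. Then the key identity \eqref{ywhom} from Lemma \ref{lem:Dpz}, namely $Y_W(Y(x,z_0)v,z)=Y_\E^\phi(Y_W(x,z),z_0)Y_W(v,z)$, packages these $\phi$-products into the single generating series $Y_W(Y(R_{g^{-1}}u,z_0)v,z_2)$ upon taking $\Res_{z_0}$ against $z_0^{-j-1}$. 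I would also need to account for the character $\chi(g)$: here I expect to use \eqref{chi-chi_phi:p(x)}, which gives $\bar p(\chi_\phi(g)^{-1}z_1)=\chi(g)\bar p(z_1)\cdot(\text{correction})$, so that the scaling $\lambda^{-1}=\chi_\phi(g)^{-1}$ inside $\bar p$ produces exactly the factor $\chi(g)$ appearing in the stated formula.

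The remaining bookkeeping is to convert the iterated operator $\left(p(z_2)\frac{\partial}{\partial z_2}\right)^{(j)}$ paired with $\Res_{z_0}z_0^{-j-1}$ into the exponential shift $e^{z_0 p(z_2)\frac{\partial}{\partial z_2}}$; this is the standard generating-function identity $\sum_{j\ge0}z_0^j\left(p(z_2)\frac{\partial}{\partial z_2}\right)^{(j)}=e^{z_0 p(z_2)\partial_{z_2}}$, after which $\Res_{z_0}$ recombines the $z_0$-dependence of $Y_W(Y(R_{g^{-1}}u,z_0)v,z_2)$ with that of the delta function into the claimed closed form. I expect the main obstacle to be the careful matching of the scaling argument $\chi_\phi(g)z_2/z_1$ inside the delta with the shift operator and the character factor $\chi(g)$ simultaneously — that is, verifying that the substitution $z_1\mapsto\chi_\phi(g)^{-1}z_1$ (implicit in $\bar p(\lambda^{-1}z_1)\delta(\lambda z_2/z_1)$ versus $\bar p(z_1)\delta(\chi_\phi(g)z_2/z_1)$) interacts correctly with \eqref{chi-chi_phi:p(x)} and with the commuting of $e^{z_0p(z_2)\partial_{z_2}}$ past the delta function. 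The finiteness of both sums, guaranteed by Proposition \ref{prop:quasilocalpair} and the fact that $V$ is a vertex algebra (so $Y(R_{g^{-1}}u,z_0)v\in V((z_0))$), ensures all manipulations are legitimate.
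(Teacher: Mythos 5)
Your proposal is correct and follows essentially the same route as the paper's own proof: quasi-locality with roots in $\chi_\phi(G)$, Proposition \ref{prop:quasilocalpair}, reindexing $\lambda=\chi_\phi(g)$ via the section $\psi$, the equivariance \eqref{eq:Ywgchiphi} combined with \eqref{ywhom}, and the identity $\bar{p}(\chi_\phi(g)^{-1}z_1)=\chi(g)\bar{p}(z_1)$ from \eqref{chi-chi_phi:p(x)} (your ``correction'' factor is exactly $1$). The only slips are cosmetic: the passing reference to Lemma \ref{lem:resmod} is spurious, and you should cite \eqref{eq:qzlocal} to upgrade the product condition \eqref{eq:qzquasilocalG} to $q(z_1/z_2)[Y_W(u,z_1),Y_W(v,z_2)]=0$ before invoking Proposition \ref{prop:quasilocalpair}.
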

\begin{proof} Let $u,v\in V$. From \eqref{eq:qzquasilocalG} and \eqref{eq:qzlocal},
it follows that there exists a
$q(z)\in\C_{\chi_\phi(G)}[z]$ such that
\[q(z_1/z_2)[Y_W(u,z_1),Y_W(v,z_2)]=0.\]
Then by applying Proposition \ref{prop:quasilocalpair} we have
\begin{eqnarray*}
&&[Y_W(u,z_1),Y_W(v,z_2)]\\
&=&\sum _{\lambda\in \chi_\phi(G)}\sum_{j\geq0} Y_W(u,\lambda z_2)_j^\phi Y_W(v,z_2)
\left(p(z_{2})\frac{\partial}{\partial z_{2}}\right)^{(j)}\bar{p}(\lambda^{-1}z_1)\delta\left(\frac{\lambda z_{2}}{z_{1}}\right).
\end{eqnarray*}
 Note that \eqref{chi-chi_phi:p(x)} implies that for $g\in G$,
 \begin{align*}
 \bar{p}(\chi_\phi(g)^{-1}z_1)=\chi_\phi(g)z_1^{-1}
 p(\chi_\phi(g^{-1})z_1)=\chi(g)z_1^{-1}p(z_1)=\chi(g)\bar p(z_1),
 \end{align*}
together with $\psi$ a section of $\chi_\phi$, we obtain
\begin{equation*}\begin{split}
&[Y_W(u,z_1),Y_W(v,z_2)]\\
=\Res_{z_0}&\sum_{g\in\psi(\chi_\phi(G))}
Y_\E^{\phi}\left(Y_W(u,\chi_\phi(g) z_2),z_0\right)Y_W(v,z_2)e^{z_0p(z_2)\frac{\partial}{\partial z_2}}\left(\chi(g)\bar p(z_1)\delta\left(\frac{\chi_\phi(g)z_2}{z_1}\right)\right).
\end{split}\end{equation*}
From \eqref{eq:Ywgchiphi} and \eqref{ywhom}, we have
\begin{equation*}\begin{split}
Y_\E^{\phi}\left(Y_W(u,\chi_\phi(g) z),z_0\right)Y_W(v,z)
=Y_\E^{\phi}\left(Y_W(R_{g^{-1}}u, z),z_0\right)Y_W(v,z)
=Y_W(Y(R_{g^{-1}}u,z_0)v,z)
\end{split}\end{equation*}
for  $g\in G$. Thus \eqref{eq:G-phi-m} follows.
\end{proof}

\begin{dfnt}
{\em Let $W$ be a vector space and let $\Gamma$ be a subgroup of $\C^\times$.  A subset $S$ of $\E(W)$ is said to {\em $\Gamma$-quasi local} if for any pair $a(z),b(z)$ in $S$,
 there exists $q(z)\in\C_\Gamma[z]$ such that
\[q(z_1/z_2)[a(z_1),b(z_2)]=0,\]
and is said to be {\em $\Gamma$-stable} if for any $a(z)\in S$ and $\lambda\in\Gamma$, $a(\lambda z)\in S$.
}
\end{dfnt}

For a $\Gamma$-quasi local and $\Gamma$-stable subset $S$ of $\E(W)$, $\<S\>_\phi$ is also $\Gamma$-quasi local (Theorem \ref{prop:n-product-local}) and $\Gamma$-stable (\cite{JKLT}).
Furthermore, from \cite[Theorem 3.11]{JKLT} and Theorem \ref{thm-(quasi)localset-va}, we immediately have
the following result.
\begin{thm}\label{thm:(G-chi-localset-va}
Let $S$ be a $\chi_\phi(G)$-stable and $\chi_\phi(G)$-quasi local subset of $\E(W)$.
Assume that the character $\chi_\phi$ is injective.
Then the vertex algebra $\langle S\rangle_\phi$ together with the group homomorphism
\[R:G\rightarrow \mathrm{GL}(\<S\>_\phi),\quad g\mapsto R_g\] is a $(G,\chi)$-vertex algebra and $W$ is a faithful
$(G,\chi_\phi)$-equivariant $\phi$-coordinated quasi $\langle S\rangle_\phi$-module with $Y_W(a(z),z_0)=a(z_0)$ for $a(z)\in \langle S\rangle_\phi$, where $R_g(a(z))= a(\chi_\phi(g)^{-1}z)$.
\end{thm}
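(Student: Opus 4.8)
The plan is to assemble Theorem \ref{thm:(G-chi-localset-va} from two pieces that are already essentially in hand: the vertex-algebra structure on $\langle S\rangle_\phi$ supplied by Theorem \ref{thm-(quasi)localset-va}, and the $(G,\chi)$-vertex-algebra machinery for $(G,\chi_\phi)$-equivariant $\phi$-coordinated quasi modules in the nonlocal setting from \cite[Theorem 3.11]{JKLT}. First I would invoke Theorem \ref{thm-(quasi)localset-va}: since $S$ is in particular $\chi_\phi(G)$-quasi local, it is quasi local, so $\langle S\rangle_\phi$ is a genuine vertex algebra (not merely a nonlocal vertex algebra) with $W$ a faithful $\phi$-coordinated quasi module on which $Y_W(a(z),z_0)=a(z_0)$. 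Then I would apply \cite[Theorem 3.11]{JKLT} to the same $\chi_\phi(G)$-stable, $\chi_\phi(G)$-quasi local set $S$; that result, proved for nonlocal vertex algebras, already furnishes the group homomorphism $R:G\to\mathrm{GL}(\langle S\rangle_\phi)$ given on generators by $R_g(a(z))=a(\chi_\phi(g)^{-1}z)$, together with the assertions that $(\langle S\rangle_\phi,R)$ is a $(G,\chi)$-nonlocal vertex algebra and that $W$ is a faithful $(G,\chi_\phi)$-equivariant $\phi$-coordinated quasi module.

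The only thing to check is internal consistency: the two theorems speak about the same object $\langle S\rangle_\phi$, the same $\phi$-product $Y_\E^\phi$, and the same action $Y_W(a(z),z_0)=a(z_0)$, so the vertex-algebra structure from Theorem \ref{thm-(quasi)localset-va} and the $(G,\chi)$-structure from \cite{JKLT} are compatible by construction. Concretely, I would note that $R_g$ is defined by the substitution $z\mapsto\chi_\phi(g)^{-1}z$ on the generators and extended to all of $\langle S\rangle_\phi$; one verifies $R_g$ is a vertex-algebra automorphism fixing $1_W$ and satisfying $R_gY_\E^\phi(a(z),z_0)R_g^{-1}=Y_\E^\phi(R_g a(z),\chi(g)z_0)$, which is exactly the $(G,\chi)$-vertex-algebra axiom. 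The equivariance relation $Y_W(R_g a(z),z_0)=Y_W(a(z),\chi_\phi(g)^{-1}z_0)$ reduces, under $Y_W(a(z),z_0)=a(z_0)$, to the tautology $R_g a(z)=a(\chi_\phi(g)^{-1}z)$, which is the definition of $R_g$.

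The role of the injectivity hypothesis on $\chi_\phi$ deserves a remark, and this is the step I expect to require the most care. When $\chi_\phi$ is injective, the section $\psi:\chi_\phi(G)\to G$ is a bijection and the map $g\mapsto R_g$ is an honest group homomorphism with $R_g$ depending only on $\chi_\phi(g)$; without injectivity one would have to pass to $G/\ker\chi_\phi$, which is precisely the phenomenon handled later in the paper via $\widehat{\CC}_\phi[G]$ and the $(G/H,\bar\chi_\phi)$-modules. Here injectivity guarantees that the formula $R_g(a(z))=a(\chi_\phi(g)^{-1}z)$ unambiguously defines the $G$-action on the whole algebra and that $W$ is equivariant for $G$ itself rather than a quotient. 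I would therefore state explicitly that under injectivity the compatibility of substitution with $Y_\E^\phi$ (established in \cite{JKLT}) transfers verbatim to the vertex-algebra $\langle S\rangle_\phi$, completing the proof.

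\begin{proof}
Since $S$ is $\chi_\phi(G)$-quasi local, it is in particular quasi local, so by Theorem \ref{thm-(quasi)localset-va} the triple $(\langle S\rangle_\phi, Y_\E^\phi, 1_W)$ is a vertex algebra with $W$ a faithful $\phi$-coordinated quasi module on which $Y_W(a(z),z_0)=a(z_0)$ for $a(z)\in\langle S\rangle_\phi$. On the other hand, by \cite[Theorem 3.11]{JKLT} applied to the $\chi_\phi(G)$-stable and $\chi_\phi(G)$-quasi local set $S$, there is a group homomorphism $R:G\rightarrow \mathrm{GL}(\langle S\rangle_\phi)$ determined on the generators by $R_g(a(z))=a(\chi_\phi(g)^{-1}z)$, such that $(\langle S\rangle_\phi, R)$ is a $(G,\chi)$-nonlocal vertex algebra and $W$ is a faithful $(G,\chi_\phi)$-equivariant $\phi$-coordinated quasi $\langle S\rangle_\phi$-module with $Y_W(a(z),z_0)=a(z_0)$. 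Since the underlying nonlocal vertex algebra structure in \cite{JKLT} is given by the same $\phi$-product $Y_\E^\phi$, the two structures agree, and we have already shown $\langle S\rangle_\phi$ is a vertex algebra. Therefore $(\langle S\rangle_\phi, R)$ is a $(G,\chi)$-vertex algebra and $W$ is a faithful $(G,\chi_\phi)$-equivariant $\phi$-coordinated quasi $\langle S\rangle_\phi$-module, with $R_g(a(z))=a(\chi_\phi(g)^{-1}z)$, as desired.
\end{proof}
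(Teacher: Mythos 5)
Your proposal is correct and follows essentially the same route as the paper: the paper's own proof consists precisely of combining \cite[Theorem 3.11]{JKLT} (which supplies the $G$-action $R_g(a(z))=a(\chi_\phi(g)^{-1}z)$, the $(G,\chi)$-structure, and the equivariant quasi-module structure in the nonlocal setting) with Theorem \ref{thm-(quasi)localset-va} (which upgrades $\langle S\rangle_\phi$ from a nonlocal vertex algebra to a vertex algebra), noting beforehand that $\langle S\rangle_\phi$ remains $\chi_\phi(G)$-quasi local and $\chi_\phi(G)$-stable. Your additional remarks on compatibility of the two structures and on the role of injectivity are consistent with how the paper later handles non-injective $\chi_\phi$ by passing to $G/\ker\chi_\phi$.
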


The following notion was introduced in \cite{Li3}:

\begin{dfnt}{\em  A {\em $(G,\chi)$-conformal algebra} $(\CC,R)$ is a conformal
 algebra $\CC$
together with a group homomorphism
\begin{eqnarray*}
&&R:G\rightarrow \mathrm{GL}(\CC),\quad g\mapsto R_g,
\end{eqnarray*}
such that for any $a,b\in\CC$,
\begin{eqnarray*}
\te{(CR1)}&\qquad&\partial R_g=\chi(g)^{-1}R_g\partial,\\
\te{(CR2)}&\qquad&R_g(a_{m}b)=\chi(g)^{-(m+1)}(R_ga)_{m}(R_gb)\quad\te{for }m\in\N,\\
\te{(CR3)}&\qquad&(R_ga)_{m}b=0\quad \te{for all but finitely many }m\in\N, g\in G.
\end{eqnarray*}
For $(G,\chi)$-conformal algebras $(\CC,R)$, $(\CC',R')$, a linear map $\psi:\CC\rightarrow \CC'$ is called a {\em $(G,\chi)$-conformal algebra homomorphism} if
$\psi$ is a conformal algebra homomorphism and
$\psi\circ R_g=R'_g\circ \psi$ for $g\in G$.}
\end{dfnt}

Let $(\CC,R)$ be a $(G,\chi)$-conformal algebra.
For any $g\in G$, we define a linear automorphism $\hat R_g$ on $\CC\ot \C((x))$ by
\begin{align}
\hat R_g(a\ot f(x))=\chi(g)^{-1}R_g(a)\ot f(\chi_\phi(g)^{-1}x),\quad a\in \CC, f(x)\in \C((x)).
\end{align}

\begin{lemt}\label{lem:hatRg} For $g\in G$, one has
\begin{align*}
\hat R_g\circ (\partial\ot 1+1\ot p(x)\frac{d}{dx})
=\chi(g)(\partial\ot 1+1\ot p(x)\frac{d}{dx})\circ \hat R_g.
\end{align*}
\end{lemt}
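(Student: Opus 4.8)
The plan is to verify the operator identity by evaluating both sides on an arbitrary simple tensor $a\ot f(x)$ with $a\in\CC$, $f(x)\in\C((x))$, since such tensors span $\CC\ot\C((x))$ and every map in sight is linear. Writing $D=\partial\ot 1+1\ot p(x)\frac{d}{dx}$ for brevity, the whole computation rests on three ingredients: axiom (CR1) rewritten as $R_g\partial=\chi(g)\partial R_g$, the chain rule for the dilation $x\mapsto\chi_\phi(g)^{-1}x$, and the compatibility relation \eqref{chi-chi_phi:p(x)} between $p$, $\chi$ and $\chi_\phi$.

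For the left-hand side I would first apply $D$ to obtain $\partial a\ot f(x)+a\ot p(x)f'(x)$, then apply $\hat R_g$ termwise. On the first summand, (CR1) turns $\chi(g)^{-1}R_g\partial a$ into $\partial R_g a$, so the scalar cancels; on the second summand, the dilation sends $p(x)f'(x)$ to $p(\chi_\phi(g)^{-1}x)f'(\chi_\phi(g)^{-1}x)$, still carrying a prefactor $\chi(g)^{-1}$.

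For the right-hand side I would first apply $\hat R_g$ and then $D$. Here the point is that $D$ now differentiates the already-dilated function $f(\chi_\phi(g)^{-1}x)$, so the chain rule produces an extra factor $\chi_\phi(g)^{-1}$, while the polynomial factor is the undilated $p(x)$. After multiplying by the prefactor $\chi(g)$, the two resulting expressions match automatically in their $\partial$-terms, and the sole discrepancy is in the coefficient of the non-derivative term: the left side carries $\chi(g)^{-1}p(\chi_\phi(g)^{-1}x)$ whereas the right side carries $\chi_\phi(g)^{-1}p(x)$.

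The one genuine step is to reconcile these two coefficients, and this is exactly where \eqref{chi-chi_phi:p(x)} enters. Substituting $g^{-1}$ for $g$ in that relation and using $\chi_\phi(g^{-1})=\chi_\phi(g)^{-1}$, $\chi(g^{-1})=\chi(g)^{-1}$ gives $p(\chi_\phi(g)^{-1}x)=\chi(g)\chi_\phi(g)^{-1}p(x)$, which is precisely the needed equality of coefficients. I expect this bookkeeping of the characters — in particular remembering to invert $g$ in \eqref{chi-chi_phi:p(x)} — to be the only place where an inverse could slip, so that is the step to carry out most carefully; everything else is a routine term-by-term comparison.
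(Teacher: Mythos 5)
Your proposal is correct and follows essentially the same route as the paper's proof: evaluate both sides on a simple tensor $a\ot f(x)$, use (CR1) to match the $\partial$-terms, and invoke \eqref{chi-chi_phi:p(x)} (with $g$ replaced by $g^{-1}$) to reconcile the coefficients of the derivative term. Your explicit bookkeeping of the substitution $g\mapsto g^{-1}$ is the one detail the paper leaves implicit, and you handle it correctly.
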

\begin{proof}
For  $a\in\CC$, $f(x)\in\C((x))$, we have
\begin{equation*}\begin{split}
&\hat R_g\big(\partial a\ot f(x)+a\ot p(x)\frac{d f(x)}{d x}\big)\\
=&\chi(g)^{-1}\Big(R_g(\partial a)\ot f(\chi_\phi(g)^{-1}x)+R_ga\ot p(\chi_\phi(g)^{-1}x)\big(\frac{d f(x)}{d x}|_{x=\chi_\phi(g)^{-1}x}\big)\Big)\\
=&(\partial R_g a)\ot f(\chi_\phi(g)^{-1}x)+\chi_\phi(g)^{-1}R_ga\ot p(x)\big(\frac{d f(x)}{d x}|_{x=\chi_\phi(g)^{-1}x}\big)\\
=&\chi(g)(\partial\ot1+1\ot p(x)\frac{d}{dx})\hat R_g(a\otimes f(x)),
\end{split}\end{equation*}
where in the second equality we have used the relations (CR1) and \eqref{chi-chi_phi:p(x)}.
\end{proof}

In view of Lemma \ref{lem:hatRg}, $\hat R_g$ induces a linear automorphism on $\wh{\CC}_\phi$, which we still denote  as $\hat R_g$.
Then we have:

\begin{lemt}\label{lem:Gonhatcc} For  $g\in G$, $\hat R_g$ is a Lie automorphism of $\wh\CC_\phi$.
Furthermore, for any $u,v\in \wh\CC_\phi$,
$[\hat R_g(u),v]=0$ for all but finitely many $g\in G$.
\end{lemt}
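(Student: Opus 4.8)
The plan is as follows. By Lemma~\ref{lem:hatRg} we already know that $\hat R_g$ descends to a well-defined linear automorphism of $\wh\CC_\phi$, so for the first assertion it remains only to check that $\hat R_g$ respects the bracket \eqref{lb-C_p0}. I would verify the identity $\hat R_g([u,v])=[\hat R_g(u),\hat R_g(v)]$ directly at the level of $\CC\ot\C((x))$ on generators $u=a\ot f(x)$, $v=b\ot h(x)$, and then pass to the quotient; by bilinearity this suffices.

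The key computational ingredient is the way the dilation $T_\lambda\colon x\mapsto \chi_\phi(g)^{-1}x$ (an algebra automorphism of $\C((x))$, with $\lambda=\chi_\phi(g)^{-1}$) commutes with the operator $p(x)\frac{d}{dx}$. Using \eqref{chi-chi_phi:p(x)} in the form $p(\chi_\phi(g)^{-1}x)=\chi(g)\chi_\phi(g)^{-1}p(x)$ together with $\frac{d}{dx}T_\lambda=\lambda T_\lambda\frac{d}{dx}$, a one-line computation yields $T_\lambda\circ\left(p(x)\frac{d}{dx}\right)=\chi(g)\left(p(x)\frac{d}{dx}\right)\circ T_\lambda$, and hence by iteration
\[
T_\lambda\circ\Big(p(x)\tfrac{d}{dx}\Big)^{(i)}=\chi(g)^{i}\Big(p(x)\tfrac{d}{dx}\Big)^{(i)}\circ T_\lambda .
\]
Feeding this, together with axiom (CR2) in the form $R_g(a_ib)=\chi(g)^{-(i+1)}(R_ga)_i(R_gb)$, into the two sides, I expect both $\hat R_g([u,v])$ and $[\hat R_g(u),\hat R_g(v)]$ to collapse to $\sum_{i\ge0}\chi(g)^{-2}(R_ga)_i(R_gb)\ot\big((p(x)\tfrac{d}{dx})^{(i)}f(\lambda x)\big)h(\lambda x)$. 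The scalar $\chi(g)^{-2}$ must appear identically on both sides: on the left it is assembled from the prefactor $\chi(g)^{-1}$ in the definition of $\hat R_g$, the factor $\chi(g)^{-(i+1)}$ from (CR2), and the factor $\chi(g)^{i}$ from the commutation relation, while on the right it comes from the two prefactors $\chi(g)^{-1}$ of $\hat R_g(u)$ and $\hat R_g(v)$ via bilinearity of \eqref{lb-C_p0}. Since $\hat R_g$ is invertible on the quotient, this makes it a Lie automorphism of $\wh\CC_\phi$.

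For the second assertion, by bilinearity I may take $u=\overline{a\ot f(x)}$ and $v=\overline{b\ot h(x)}$. Expanding $[\hat R_g(u),v]$ via \eqref{lb-C_p0} produces a sum whose $i$-th term is a scalar multiple of $(R_ga)_i\,b$. Axiom (CR3) states that $(R_ga)_m b=0$ for all but finitely many pairs $(m,g)\in\N\times G$; projecting this finite exceptional set onto $G$, only finitely many $g\in G$ admit some index $i$ with $(R_ga)_i b\ne0$. For every other $g$ each term of the sum vanishes, so $[\hat R_g(u),v]=0$, which is exactly the claim; a finite union of such exceptional sets handles general $u,v$.

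I expect the only real obstacle to be the scalar bookkeeping in the first assertion, where three separate powers of $\chi(g)$ have to cancel. Once the commutation relation for $T_\lambda$ is recorded, matching the two sides is routine, and the finiteness statement is an immediate consequence of (CR3).
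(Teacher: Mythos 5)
Your proposal is correct and follows essentially the same route as the paper: verify the bracket-preservation identity on generators $a\ot f(x)$, $b\ot h(x)$ using bilinearity, axiom (CR2), and the relation \eqref{chi-chi_phi:p(x)} (which you package as the commutation rule $T_\lambda\circ\big(p(x)\tfrac{d}{dx}\big)=\chi(g)\big(p(x)\tfrac{d}{dx}\big)\circ T_\lambda$, used implicitly in the paper's last equality), then deduce the finiteness assertion directly from (CR3). The scalar bookkeeping you describe matches the paper's computation exactly, so there is nothing to add.
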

\begin{proof} For $a,b\in \CC$ and $f(x),g(x)\in \C((x))$ we have
\begin{equation*}\begin{split}
&[\hat R_g(a\ot f(x)),\hat R_g(b\ot g(x))]\\
=&\chi(g)^{-2}[R_g(a)\ot f(\chi_\phi(g)^{-1}x),R_g(b)\ot g(\chi_\phi(g)^{-1}x)]\\
=&\chi(g)^{-2}\sum_{i\geq0}((R_ga)_i(R_gb))\ot \left(\left(p(x)\frac{d}{d x}\right)^{(i)}f(\chi_\phi(g)^{-1}x)\right)g(\chi_\phi(g)^{-1}x)\\
=&\chi(g)^{-1+i}\sum_{i\geq0}(R_g(a_ib))\ot \left(\left(p(x)\frac{d}{d x}\right)^{(i)}f(\chi_\phi(g)^{-1}x)\right)g(\chi_\phi(g)^{-1}x)\\
=&\hat R_g([a\ot f(x),b\ot g(x)]),
\end{split}\end{equation*}
where in the third  and last equality we have used the relations (CR2) and \eqref{chi-chi_phi:p(x)}, respectively.
This proves that $\hat R_g$ is a Lie automorphism and
the second assertion follows from (CR3).
\end{proof}

Following \cite{Li3}, we define a multiplication  on $\wh\CC_\phi$ by
\begin{align}
(u,v)\mapsto \sum_{g\in G} [\hat R_g(u),v],\quad u,v\in \wh\CC_\phi.
\end{align}
In view of Lemma \ref{lem:Gonhatcc}, it was known that under this new multiplication, the
quotient space
\begin{align*}
\wh\CC_\phi[G]:=\wh\CC_\phi/\te{span}\{\hat R_g(u)-u\mid g\in G, u\in \wh\CC_\phi\}
\end{align*}
is a Lie algebra \cite{Li3}.
For convenience, for $a\in \CC$, $f(x)\in \C((x))$, we still denote the image of $\overline{a\ot f(x)}$ in $\wh\CC_\phi[G]$ by $\overline{a\ot f(x)}$.

\begin{remt}{\em When $G$ is a finite group, $\wh\CC_\phi[G]$ is obviously isomorphic to the subalgebra
of $\wh\CC_\phi$ fixed by $\hat R_g$ for any $g\in G$.
}
\end{remt}

For $a\in\CC$, we set
\[a^G_{\phi}(z)=\sum_{n\in\Z}\overline{a\ot x^np(x)}z^{-n-1}\in \wh\CC_\phi[G][[z,z^{-1}]].\]
We have

\begin{dfnt}\label{de:resccgmod}{\em A $\widehat{\CC}_{\phi}[G]$-module $W$ is called  {\em restricted} if $a^G_{\phi}(z)\in \E(W)$ for any $a\in\CC$.
 }
\end{dfnt}

From \eqref{chi-chi_phi:p(x)}, we have that for $a\in \CC$,
 \begin{align}\label{eq:rgaphiz}
 (R_ga)^G_{\phi}(z)=a^G_{\phi}(\chi_\phi(g)^{-1}z).\end{align}
Furthermore, for $a,b\in \CC$, we have
\begin{equation}\begin{split}\label{lb-G2}
&[a^G_{\phi}(z),b^G_{\phi}(w)]=\sum_{g\in G}[(R_ga)^G_{\phi}(\chi_\phi(g)z),b^G_{\phi}(w)]\\
=&\sum_{g\in G}\sum_{i\geq 0}\left((R_{g^{-1}}a)_{i}b\right)^G_{\phi}(w)\left(p(w)\frac{\partial}{\partial w}\right)^{(i)}\left(\chi(g)\bar p(z)\delta\left(\frac{\chi_\phi(g)w}{z}\right)\right).
\end{split}\end{equation}

The same proof as that of Lemma \ref{lem:resmod}, we have
\begin{lemt}\label{lem:resgmod} Let $W$ be a vector space. Assume that there is a linear map
\[\pi:\CC\rightarrow \E(W),\quad a\mapsto a(z)=\sum_{n\in \Z}a_{[n]} z^{-n-1},\]
 such that for $a\in \CC$,
 \begin{align}\label{eq:derphiz-G}
  \(p(z)\frac{\partial}{\partial z}\)a(z)=(\partial a)(z),\quad
  (R_ga)(z)=a(\chi_\phi(g)^{-1}z),
  \end{align}
and  for $a,b\in \CC$,
\begin{eqnarray*}\label{lb-CG}
[a(z),b(w)]=
\sum_{g\in G}\sum_{i\geq 0}\left((R_{g^{-1}}a)_{i}b\right)(w)\left(p(w)\frac{\partial}{\partial w}\right)^{(i)}\left(\chi(g)\bar p(z)\delta\left(\frac{\chi_\phi(g)w}{z}\right)\right).
\end{eqnarray*}
Then there is a restricted $\wh{\CC}_\phi[G]$-module structure on $W$ with $a^G_\phi(z)=a(z)$ for $a\in \CC$.
\end{lemt}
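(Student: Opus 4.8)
The plan is to mimic the proof of Lemma~\ref{lem:resmod} exactly, transporting it from the $\wh{\CC}_\phi$ setting to the $\wh{\CC}_\phi[G]$ setting. The statement asserts that a ``generating-function realization'' $\pi$ of $\CC$ on $\E(W)$ satisfying the three structural identities \eqref{eq:derphiz-G} and the $G$-twisted commutator relation integrates to a genuine restricted $\wh{\CC}_\phi[G]$-module structure. Since Lemma~\ref{lem:resmod} already produces a $\wh{\CC}_\phi$-module, the real content is only to check that this $\wh{\CC}_\phi$-action descends to the quotient Lie algebra $\wh{\CC}_\phi[G]=\wh{\CC}_\phi/\te{span}\{\hat R_g(u)-u\}$, i.e. that the spanning set of the quotient acts as zero, and that the bracket computed in the quotient (via $\sum_{g\in G}[\hat R_g(\cdot),\cdot]$) matches the commutator of operators prescribed by the hypothesis.

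\emph{First} I would invoke Lemma~\ref{lem:resmod} as a black box. The first identity in \eqref{eq:derphiz-G}, namely $(p(z)\tfrac{d}{dz})a(z)=(\partial a)(z)$, is precisely hypothesis \eqref{eq:derphiz} of that lemma; it guarantees the action \eqref{modact} is well-defined and furnishes a restricted $\wh{\CC}_\phi$-module on $W$ with $a_\phi(z)=a(z)$. The formula \eqref{modact} defines $\overline{a\ot f(x)}$ as a finite sum of the operators $a_{[n]}$, so the $\wh{\CC}_\phi$-action of any element of $\wh{\CC}_\phi$ is determined. \emph{Next} I would use the second identity in \eqref{eq:derphiz-G}, $(R_g a)(z)=a(\chi_\phi(g)^{-1}z)$, to show that $\hat R_g(u)$ and $u$ act identically on $W$ for every $u\in\wh{\CC}_\phi$ and $g\in G$. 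Working on a spanning element $u=\overline{a\ot f(x)}$, the definition $\hat R_g(a\ot f(x))=\chi(g)^{-1}R_g(a)\ot f(\chi_\phi(g)^{-1}x)$ together with $(R_g a)(z)=a(\chi_\phi(g)^{-1}z)$ and a change of the summation variable in \eqref{modact} should give $\overline{\hat R_g(a\ot f(x))}\cdot w=\overline{a\ot f(x)}\cdot w$; the bookkeeping is the substitution $x\mapsto\chi_\phi(g)^{-1}x$ in the expansion $p(x)^{-1}f(x)=\sum b_nx^n$ combined with the character factors, exactly paralleling the relation \eqref{eq:rgaphiz}. This shows the $\wh{\CC}_\phi$-action factors through the quotient space $\wh{\CC}_\phi[G]$, so $W$ carries a well-defined action of the \emph{vector space} $\wh{\CC}_\phi[G]$ with $a^G_\phi(z)=a(z)$.

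\emph{Finally} I would verify that this action respects the $[G]$-bracket $(u,v)\mapsto\sum_{g\in G}[\hat R_g(u),v]$. Because the $\wh{\CC}_\phi$-action is already a Lie-module action (from Lemma~\ref{lem:resmod}), and since each $\hat R_g(u)$ acts as $u$ does, the summand $[\hat R_g(u),v]$ acts on $W$ as the \emph{operator} commutator $[\,u(z),\,v(w)\,]$ reorganized by $g$; summing over $g$ and comparing generating functions, the target identity is precisely the hypothesized commutator relation for $[a(z),b(w)]$, which is the verbatim analogue of \eqref{lb-G2} with $a^G_\phi,b^G_\phi$ replaced by $a(z),b(w)$ and $(R_{g^{-1}}a)_ib$ recognized as a structure constant of $\CC$. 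Thus matching the two generating-function expansions coefficient-by-coefficient in $z_1,z_2$ (using the delta-function identities of Section~2, in particular the uniqueness from Lemma~\ref{delta-2}) confirms that \eqref{modact} defines a $\wh{\CC}_\phi[G]$-module. \textbf{The main obstacle} I anticipate is purely notational rather than conceptual: carefully tracking the character weights $\chi(g)$ and the dilation $\chi_\phi(g)^{-1}z$ through the change of variables so that the twisted bracket $\sum_{g\in G}[\hat R_g(\cdot),\cdot]$ on $\wh{\CC}_\phi$ lines up term-by-term with the single summed-over-$g$ commutator in the hypothesis; the relation \eqref{chi-chi_phi:p(x)}, $p(\chi_\phi(g)x)=\chi(g)^{-1}\chi_\phi(g)p(x)$, is the crucial compatibility that makes these weights cancel correctly, exactly as it did in the proofs of Lemmas~\ref{lem:hatRg} and \ref{lem:Gonhatcc}. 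Since the paper explicitly states ``The same proof as that of Lemma~\ref{lem:resmod},'' I would keep the writeup to a sentence or two pointing to that lemma and to the computation \eqref{lb-G2}, rather than reproducing the delta-function manipulations in full.
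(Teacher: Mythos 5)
There is a genuine gap, and it is conceptual rather than notational: Lemma \ref{lem:resmod} cannot be invoked as a black box here. That lemma has \emph{two} hypotheses, the derivation property \eqref{eq:derphiz} \emph{and} the untwisted commutator relation \eqref{lb-C}; in its proof, \eqref{eq:derphiz} only gives well-definedness of the action \eqref{modact}, while the Lie-module property comes entirely from comparing \eqref{lb-C} with \eqref{lb-C_p}. In the present situation \eqref{lb-C} is \emph{not} among the hypotheses --- what is assumed is the $G$-twisted relation, whose right-hand side involves $\delta\left(\chi_\phi(g)w/z\right)$ for all $g\in G$ --- so $\pi$ does not satisfy the hypotheses of Lemma \ref{lem:resmod}, and no restricted $\wh{\CC}_\phi$-module structure on $W$ is available. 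Moreover, the intermediate picture your final step relies on (``the $\wh{\CC}_\phi$-action is already a Lie-module action \dots and each $\hat R_g(u)$ acts as $u$ does'') is internally inconsistent except in degenerate cases: if both held, then for \emph{each individual} $g$ the element $[\hat R_g(u),v]$ of $\wh{\CC}_\phi$ would act on $W$ as the full operator commutator of the actions of $u$ and $v$. Since $[\hat R_g(u),v]=0$ in $\wh{\CC}_\phi$ for all but finitely many $g$ (Lemma \ref{lem:Gonhatcc}), and since the twisted hypothesis equates the operator commutator with the \emph{sum} over $g$ of the actions of these brackets, one quickly deduces that all operator commutators $[a(z_1),b(z_2)]$ would have to vanish whenever $|G|>1$, contradicting the hypothesis in any nondegenerate case. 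The whole point of the twisted relation is that only the sum over $g$ of the brackets $[\hat R_g(u),v]$ is represented by the operator commutator; the individual summands are not.

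The repair --- and this is what the paper means by ``the same proof as that of Lemma \ref{lem:resmod}'' --- is to skip the intermediate $\wh{\CC}_\phi$-module entirely. Define the action of $\overline{a\ot f(x)}$ on $W$ directly by the analogue of \eqref{modact}: write $p(x)^{-1}f(x)=\sum_n b_nx^n$ and let $\overline{a\ot f(x)}$ act as the (finite, since $a(z)\in\E(W)$) sum $\sum_n b_na_{[n]}$. Then check that this linear assignment kills $\mathrm{Im}\,(\partial\ot 1+1\ot p(x)\frac{d}{dx})$ by the first identity of \eqref{eq:derphiz-G}, and kills $\te{span}\{\hat R_g(u)-u\mid g\in G,\,u\in\wh\CC_\phi\}$ by the second, so that it descends to a linear action of the vector space $\wh{\CC}_\phi[G]$; finally, verify the Lie-module axiom in one stroke by comparing the hypothesized commutator relation with the bracket formula \eqref{lb-G2} of $\wh{\CC}_\phi[G]$, matching coefficients of the delta functions (Lemma \ref{delta-2} supplies the uniqueness of such expansions). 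Your descent step and your generating-function comparison are the right ingredients; the error is in routing them through a $\wh{\CC}_\phi$-module structure that does not exist under the stated hypotheses.
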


 Let $(\CC,R)$ be a $(G,\chi)$-conformal  algebra and
set $H=\ker\chi_\phi$.  One notices from \eqref{chi-chi_phi:p(x)} that $\ker\chi_\phi\subset \ker \chi$, so
   $\chi$
  induces a linear character on $G/H$, denoted as $\bar\chi$.
Following \cite{Li3}, we have  a $(G/H,\bar\chi)$-conformal algebra $(\CC/H,R)$ induced from $(\CC,R)$.
Explicitly, as a vector space, \[\CC/H=\CC/\te{span}\{R_h a-a\mid h\in H,\ a\in\CC\}.\]
Define $\partial(aH)=(\partial a)H$ for $aH\in\CC/H$, and the $n$-products on $\CC/H$ by
\[(uH)_{n}(vH)=\sum_{h\in H}((R_hu)_{n}v)H\quad\te{ for } u,v\in\CC,\ n\in\N,\]
we also define the $G/H$-action  on $\CC/H$ by $R_{gH}(uH)=(R_gu)H$ for $g\in G, u\in \CC$.
Denote by $\bar\chi_\phi$ the character on $G/H$ induced by $\chi_\phi$.
Note that  the characters $\bar\chi,\bar\chi_\phi$
 still  satisfy the relation \eqref{chi-chi_phi:p(x)}.
 It is known that
the  $G/H$-action $R$  on $\CC/H$ can be extended uniquely to a $G/H$-action  on $V_{\CC/H}$, we still denote it by $R$,
then $(V_{\CC/H},R)$ becomes a $(G/H,\bar\chi)$-vertex algebra \cite{Li3}.
In particular we have a notion of
 $(G/H,\bar\chi_\phi)$-equivariant $\phi$-coordinated quasi module for the $(G/H,\bar\chi)$-vertex algebra $V_{\CC/H}$.
The following theorem is the main result of this section.

\begin{thm}\label{thm:G-re-m} Let $(\CC,R)$ be a $(G,\chi)$-conformal algebra and set $H=\ker\chi_\phi$.
  Then any restricted $\widehat{\CC}_{\phi}[G]$-module $W$ is a $(G/H,\bar\chi_\phi)$-equivariant
   $\phi$-coordinated quasi $V_{\CC/H}$-module
   with $Y_{W}(aH,z)=a^G_{\phi}(z)$ for $a\in \CC$. On the other hand, any $(G/H,\bar\chi_\phi)$-equivariant $\phi$-coordinated quasi $V_{\CC/H}$-module $W$ is  a restricted $\widehat{\CC}_{\phi}[G]$-module with $a^G_{\phi}(z)=Y_{W}(aH,z)$ for $a\in \CC$.
\end{thm}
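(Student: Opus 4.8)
The plan is to run the two implications in parallel with the proof of Theorem~\ref{thm:re-m}, systematically replacing each tool by its $(G,\chi_\phi)$-equivariant counterpart: Theorem~\ref{thm:(G-chi-localset-va} in place of Theorem~\ref{thm-(quasi)localset-va}, Proposition~\ref{prop-G-phi-m} in place of Proposition~\ref{prop-module-comut}, and Lemma~\ref{lem:resgmod} in place of Lemma~\ref{lem:resmod}. Throughout I would work with the group $G/H$ and its \emph{injective} character $\bar\chi_\phi$, which is exactly the injectivity hypothesis demanded by Theorem~\ref{thm:(G-chi-localset-va}. The combinatorial heart of the argument is a single reindexing: for a fixed coset $gH$ the element $k=gh^{-1}$ runs bijectively over $gH$ as $h$ runs over $H$, and since $H\subseteq\ker\chi\cap\ker\chi_\phi$ the characters $\chi,\chi_\phi$ are constant on cosets. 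This is what converts between the section sum of Proposition~\ref{prop-G-phi-m}, the $H$-sum built into the $n$-products of $\CC/H$, and the full sum $\sum_{g\in G}$ appearing in \eqref{lb-G2} and in Lemma~\ref{lem:resgmod}.

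For the first assertion, let $W$ be a restricted $\widehat{\CC}_\phi[G]$-module and set $S=\{a^G_\phi(z)\mid a\in\CC\}$. I would first read off from \eqref{lb-G2} that $S$ is $\chi_\phi(G)$-quasi local (each summand is supported on $z=\chi_\phi(g)w$, so a suitable product of factors $z/w-\chi_\phi(g)$ annihilates the bracket) and from \eqref{eq:rgaphiz} that $S$ is $\chi_\phi(G)$-stable, since $a^G_\phi(\chi_\phi(g)^{-1}z)=(R_ga)^G_\phi(z)\in S$. Theorem~\ref{thm:(G-chi-localset-va} (applied to $G/H$ with $\bar\chi_\phi$ injective) then makes $\<S\>_\phi$ a $(G/H,\bar\chi)$-vertex algebra with $W$ a faithful $(G/H,\bar\chi_\phi)$-equivariant $\phi$-coordinated quasi module, the action being $R_{gH}(a^G_\phi(z))=a^G_\phi(\chi_\phi(g)^{-1}z)=(R_ga)^G_\phi(z)$. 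Next I would define $\varphi\colon\CC/H\to\<S\>_\phi$ by $aH\mapsto a^G_\phi(z)$; this is well defined because $\chi_\phi(h)=1$ for $h\in H$ forces $(R_ha)^G_\phi(z)=a^G_\phi(z)$. Compatibility with $\partial$ follows from \eqref{dpzddz} and the $G$-analogue of \eqref{eq:deraphiz} inherited through the quotient $\widehat{\CC}_\phi\to\widehat{\CC}_\phi[G]$, namely $\D\varphi(aH)=(p(z)\tfrac{d}{dz})a^G_\phi(z)=(\partial a)^G_\phi(z)=\varphi(\partial(aH))$, and $\varphi$ intertwines the $G/H$-actions by construction. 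To see that $\varphi$ preserves $n$-products I would feed the pair $a^G_\phi(z),b^G_\phi(z)$ into Proposition~\ref{prop-G-phi-m}, compare the resulting bracket with \eqref{lb-G2}, and invoke the uniqueness of Lemma~\ref{delta-2} together with the faithfulness of $W$; grouping the section sum by the value $\lambda=\chi_\phi(g)$ and applying the reindexing above identifies $a^G_\phi(z)_{n}^{\phi}b^G_\phi(z)$ with $\sum_{h\in H}((R_ha)_nb)^G_\phi(z)=\varphi((aH)_n(bH))$. Hence $\varphi$ is a $(G/H,\bar\chi)$-conformal algebra homomorphism, and the equivariant refinement of Proposition~\ref{prop-CAhom-VAhom} from \cite{Li3} extends it to a $(G/H,\bar\chi)$-vertex algebra map $V_{\CC/H}\to\<S\>_\phi$ (its equivariance being forced on generators and propagating by the axiom $R_gY(v,z)R_g^{-1}=Y(R_gv,\chi(g)z)$). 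Pulling back the module structure exhibits $W$ as a $(G/H,\bar\chi_\phi)$-equivariant $\phi$-coordinated quasi $V_{\CC/H}$-module with $Y_W(aH,z)=a^G_\phi(z)$.

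For the converse, let $W$ be a $(G/H,\bar\chi_\phi)$-equivariant $\phi$-coordinated quasi $V_{\CC/H}$-module and put $a^G_\phi(z):=Y_W(aH,z)$. I would check the three hypotheses of Lemma~\ref{lem:resgmod}. The derivation relation $(p(z)\tfrac{d}{dz})a^G_\phi(z)=(\partial a)^G_\phi(z)$ is \eqref{dpzddz} read in $V_{\CC/H}$, and the equivariance $(R_ga)^G_\phi(z)=a^G_\phi(\chi_\phi(g)^{-1}z)$ is the module equivariance \eqref{eq:Ywgchiphi} combined with $R_{gH}(aH)=(R_ga)H$ and $\bar\chi_\phi(gH)=\chi_\phi(g)$. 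For the commutator relation I would apply Proposition~\ref{prop-G-phi-m} to $aH,bH$ for the group $G/H$; since $\bar\chi_\phi$ is injective, a section $\psi$ is a bijection, so the section sum is simply a sum over $gH\in G/H$. Expanding $Y(R_{(gH)^{-1}}(aH),z_0)(bH)$ through the $n$-products of $\CC/H$ produces $\sum_{h\in H}((R_{hg^{-1}}a)_ib)H$, and the reindexing $k=gh^{-1}$, together with the constancy of $\chi,\chi_\phi$ on cosets, collapses the double sum over $gH$ and $h$ into $\sum_{g\in G}$ with coefficients $\chi(g),\chi_\phi(g)$---exactly the formula of Lemma~\ref{lem:resgmod}. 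That lemma then delivers the restricted $\widehat{\CC}_\phi[G]$-module structure with $a^G_\phi(z)=Y_W(aH,z)$.

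The step I expect to be the main obstacle is this triple reconciliation of index sets. Verifying that the reindexing $k=gh^{-1}$ interacts correctly with the characters---so that the coefficients $\chi(g),\chi_\phi(g)$ emerge with the right values, using $H\subseteq\ker\chi\cap\ker\chi_\phi$---and with the representative-independence of the products of $\CC/H$ is the delicate bookkeeping on which both directions hinge; once it is in place, the remainder is a faithful transcription of the proof of Theorem~\ref{thm:re-m}.
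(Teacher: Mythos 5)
Your proposal is correct and follows essentially the same route as the paper's proof: both directions use Theorem \ref{thm:(G-chi-localset-va} applied to $G/H$ with the injective character $\bar\chi_\phi$, the map $\varphi(aH)=a^G_\phi(z)$ checked to be a conformal algebra homomorphism by comparing Proposition \ref{prop-G-phi-m} with \eqref{lb-G2} (the coset reindexing you describe), the extension via Proposition \ref{prop-CAhom-VAhom} with equivariance verified on generators, and Lemma \ref{lem:resgmod} for the converse. The details you flag as delicate (the $H$-sum bookkeeping and the use of Lemma \ref{delta-2} plus faithfulness to identify $\phi$-products with the $n$-products of $\CC/H$) are exactly the steps carried out in the paper.
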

\begin{proof}
Let $W$ be a restricted $\widehat{\CC}_{\phi}[G]$-module. Then $S=\{a^G_{\phi}(z)\mid a\in \CC\}$ is a $\chi_\phi(G)$-quasi local and $\chi_\phi(G)$-stable subset of $\E(W)$.
 Note that the character $\bar\chi_\phi:G/H\rightarrow \C^\times$ is injective and its image is $\chi_\phi(G)$.
In view of the Theorem \ref{thm:(G-chi-localset-va}, $\langle S\rangle_\phi$ is a $(G/H,\bar\chi)$-vertex algebra and $W$ is a faithful
$(G/H,\bar\chi_\phi)$-equivariant $\phi$-coordinated quasi $\langle S\rangle_\phi$-module with $Y_W(a(z),z_0)=a(z_0)$ for $a(z)\in \langle S\rangle_\phi$.
Note that for $h\in H$ and $a\in \CC$, from \eqref{eq:rgaphiz} we have that
\[(R_h a)^G_\phi(z)=a^G_\phi(\chi_\phi(h)^{-1}z)=a^G_\phi(z).\]
 Thus the linear map
 $$\varphi:\ \CC/H\rightarrow \langle S\rangle_{\phi},\quad aH\mapsto a^G_{\phi}(z)$$
 is well-defined. Take a section $\psi:\chi_\phi(G)\rightarrow G$ of $\chi_\phi$.
For $a,b\in \CC$, we have
\begin{align*}
&[a^G_{\phi}(z),b^G_{\phi}(w)]\\
=&\sum_{g\in G}\sum_{i\geq0}\left((R_{g^{-1}}a)_{i}b\right)^G_{\phi}(w)\left(p(w)\frac{\partial}{\partial w}\right)^{(i)}\left(\chi(g)\bar p(z)\delta\left(\frac{\chi_\phi(g)w}{z}\right)\right)\\
=&\sum_{g\in \psi(\chi_\phi(G))}\sum_{i\ge 0}
\sum_{h\in H}\left((R_{(gh)^{-1}}a)_{i}b\right)^G_{\phi}(w)\left(p(w)\frac{\partial}{\partial w}\right)^{(i)}\left(\chi(g)\bar p(z)\delta\left(\frac{\chi_\phi(g)w}{z}\right)\right).
\end{align*}
By comparing this with \eqref{eq:G-phi-m} we have
\begin{align*}
\sum_{h\in H}\left((R_{(gh)^{-1}}a)_{i}b\right)^G_{\phi}(w)
=(R_{g^{-1}}a)^G_\phi(w)_i^\phi b^G_\phi(w).
\end{align*}
This particular implies that
\begin{align*}
\varphi(aH)^\phi_i\varphi(bH)=\varphi((aH)_i(bH)),\quad a,b\in \CC, i\in \N.
\end{align*}
Furthermore, for any $a\in \CC$, we have
\begin{equation*}\begin{split}
&(\mathcal{D}\circ\varphi)(aH)=\mathcal{D}(a^G_\phi(z))=
\left(p(z)\frac{\partial}{\partial z}\right)a^G_{\phi}(z)=(\partial a)^G_{\phi}(z)=(\varphi\circ\partial)(aH).
\end{split}\end{equation*}
Thus, $\varphi$ is a conformal algebra homomorphism.
From the Proposition \ref{prop-CAhom-VAhom}, there is a vertex algebra homomorphism from $V_{\CC/H}$ to $\<S\>_\phi$
 which extends $\varphi$, we also denote it as $\varphi$. Note that for $a\in\CC$ and $g\in G$,
\begin{equation*}\begin{split}
&(R_{gH}\circ \varphi)(aH)=
R_{gH}a^G_\phi(z)=a^G_\phi(\bar{\chi}_\phi(gH)^{-1}z)\\
=&a^G_\phi(\chi_\phi(g)^{-1}z)=(R_ga)^G_\phi(z)
=\varphi((R_ga)H)=(\varphi\circ R_{gH})(aH).
\end{split}\end{equation*}
Then for $a,b\in \CC$ and $g\in G$, we have
 \begin{align*}
 &(R_{gH}\circ\varphi)(Y(aH,z)bH)
 =R_{gH}\big(Y_\E(\varphi(aH),z)\varphi(bH)\big)\\
 =&Y_\E(R_{gH}(\varphi(aH)),\bar{\chi}(gH)^{-1}z)R_{gH}(\varphi(bH))
 =Y_\E(\varphi(R_{gH}aH),\bar{\chi}(gH)^{-1}z)\varphi(R_{gH}bH)\\
 =&\varphi\big(Y(R_{gH}(aH),\bar{\chi}(gH)^{-1}z)R_{gH}(bH)\big)
 =(\varphi\circ R_{gH})(Y(aH,z)bH).
 \end{align*}
 This together with the fact that $\CC/H$ is a generating set of $V_{\CC/H}$
 proves that $R_{gH}\circ\varphi=\varphi\circ R_{gH}$ on $V_{\CC/H}$. Thus  $\varphi$ is a $(G/H,\bar\chi)$-vertex algebra homomorphism from $V_{\CC/H}$ to $\<S\>_\phi$.
Via this homomorphism, $W$ is naturally a $(G/H,\bar\chi_\phi)$-equivariant $\phi$-coordinated quasi $V_{\CC/H}$-module.

On the other hand, let $W$ be a
$(G/H,\bar\chi_\phi)$-equivariant $\phi$-coordinated quasi $V_{\CC/H}$-module.  From \eqref{dpzddz}  and \eqref{eq:rgaphiz}, for $a\in\CC$, we have
 \begin{align*}
&Y_W(\partial (aH),z)=Y_W(\mathcal{D} (aH),z)=\left(p(z)\frac{d}{d z}\right) Y_W(aH,z),\\
&(R_ga)^G_{\phi}(z)=a^G_{\phi}(\chi_\phi(g)^{-1}z).
\end{align*}
From Proposition \ref{prop-G-phi-m}, for $a,b\in\CC$, we have
\begin{equation*}\begin{split}
&[Y_W(aH,z),Y_W(bH,w)]\\
=&\sum_{g\in \psi(\chi_\phi(G))}\sum_{i\geq0}\sum_{h\in H}Y_W(((R_{hg^{-1}}a)_{i}b)H,w)\left(p(w)\frac{\partial}{\partial w}\right)^{(i)}\left(\chi(g)\bar p(z)\delta\left(\frac{\chi_\phi(g)w}{z}\right)\right)\\
=&\sum_{g\in G}\sum_{i\geq0}Y_W(((R_{g^{-1}}a)_{i}b)H,w)\left(p(w)\frac{\partial}{\partial w}\right)^{(i)}\left(\chi(g)\bar p(z)\delta\left(\frac{\chi_\phi(g)w}{z}\right)\right).
\end{split}\end{equation*}
Thus, from Lemma \ref{lem:resgmod},  we obtain that $W$ is a restricted $\widehat{\CC}_{\phi}[G]$-module with $a^G_{\phi}(z)=Y_{W}(aH,z)$ for $a\in \CC$.
\end{proof}
\section{Applications}
In this section, as an application, a class of $(G,\chi_\phi)$-equivariant $\phi$-coordinated quasi modules for affine vertex algebras is determined, and  the $\phi$-coordinated modules for
Virasoro vertex algebra and the vertex algebras associated to Novikov algebras are determined.

Firstly, let $\g$ be a (possibly infinite dimensional) Lie algebra, which is
  equipped with an invariant symmetric bilinear form $(\cdot,\cdot)$.
  Then we have the (formal) untwisted affine Lie algebra
  \begin{align*}
  \wh\g:=\g\ot \C((x))\oplus \C \mathbf{c},
  \end{align*}
  where $\mathbf{c}$ is central and for $a,b\in \g$, $f(x),g(x)\in \C((x))$,
  \begin{align*}
  [a\ot f(x), b\ot g(x)]=[a,b]\ot f(x)g(x)+(a,b)\mathrm{Res}_x((\frac{d}{dx}f(x))g(x))\mathbf{c}.
  \end{align*}
   On the other hand, we have the conformal algebra (\cite{K})
 \[\CC_\g=(\C[\partial]\ot\g)\op \C \mathrm{c},\]
 where  for $a,b\in \g$ and $n\in \N$, $\partial(\partial^n\ot a)=\partial^{n+1}\ot a,$ $\partial(\mathrm{c})=0,$ and
 \begin{align*}
a_nb=\delta_{n,0}[a,b]+\delta_{n,1}(a,b)\mathrm{c},\quad a_n\mathrm{c}=0=\mathrm{c}_n\mathrm{c}.
 \end{align*}
We have

  \begin{lemt}\label{lem:affiso} The assignment  $(a\in \g,f(x)\in \C((x)))$
 \begin{align*}
 \overline{a\ot f(x)}\mapsto a\ot f(x),\quad \overline{\mathrm{c}\ot x^{-1}p(x)}\mapsto \mathbf{c}
 \end{align*}
determines a
 Lie algebra isomorphism from $\wh{\CC_\g}_\phi$ to $\wh\g$.
 \end{lemt}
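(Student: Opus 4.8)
The plan is to realize the stated assignment as the descent of one explicit linear map, check it kills the defining relation of $\wh{\CC_\g}_\phi$, verify it intertwines the two brackets, and then produce a two-sided linear inverse to get bijectivity. First I would define $\tilde\Phi\colon\CC_\g\ot\C((x))\to\wh\g$ on the spanning set by
\begin{align*}
\tilde\Phi\big((\partial^n a)\ot f(x)\big)=(-1)^n\, a\ot\Big(p(x)\tfrac{d}{dx}\Big)^n f(x),\qquad
\tilde\Phi\big(\mathrm{c}\ot f(x)\big)=\Res_x\!\big(f(x)/p(x)\big)\,\mathbf{c},
\end{align*}
for $a\in\g$, $n\in\N$, $f(x)\in\C((x))$. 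The first step is to show $\tilde\Phi$ vanishes on $\mathrm{Im}(\partial\ot 1+1\ot p(x)\frac{d}{dx})$, so that it descends to $\Phi$ on $\wh{\CC_\g}_\phi$. On a generator $(\partial^n a)\ot f$ this is the cancellation $(-1)^{n+1}(p\frac{d}{dx})^{n+1}f+(-1)^n(p\frac{d}{dx})^n(p\frac{d}{dx})f=0$; on $\mathrm{c}\ot f$, using $\partial\mathrm{c}=0$, it reduces to $\Res_x(f'(x))=0$. One then reads off $\Phi(\overline{a\ot f})=a\ot f$ and $\Phi(\overline{\mathrm{c}\ot x^{-1}p})=\Res_x(x^{-1})\mathbf{c}=\mathbf{c}$, matching the assignment.

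Next I would check the homomorphism property. Since $\mathrm{c}$ is central in $\CC_\g$ --- indeed $\mathrm{c}_nb=0$ follows from $a_n\mathrm{c}=0$ via the skew-symmetry axiom (C2) --- the element $\overline{\mathrm{c}\ot f}$ is central in $\wh{\CC_\g}_\phi$ and $\mathbf{c}$ is central in $\wh\g$, so every bracket involving the central part vanishes on both sides. For $a,b\in\g$, the product \eqref{lb-C_p0} together with $a_ib=\delta_{i,0}[a,b]+\delta_{i,1}(a,b)\mathrm{c}$ gives $[\overline{a\ot f},\overline{b\ot g}]=\overline{[a,b]\ot fg}+(a,b)\,\overline{\mathrm{c}\ot(p f')g}$; applying $\Phi$ sends the central term to $(a,b)\Res_x((p f')g/p)\mathbf{c}=(a,b)\Res_x(f'g)\mathbf{c}$, which is exactly the cocycle term of $[a\ot f,b\ot g]$ in $\wh\g$.

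For bijectivity I would write down the candidate inverse $\Psi\colon\wh\g\to\wh{\CC_\g}_\phi$, $a\ot f\mapsto\overline{a\ot f}$ and $\mathbf{c}\mapsto\overline{\mathrm{c}\ot x^{-1}p}$; this is well defined because the underlying space of $\wh\g$ is the honest direct sum $\g\ot\C((x))\op\C\mathbf{c}$. Then $\Phi\Psi=\mathrm{id}$ is immediate. For $\Psi\Phi=\mathrm{id}$ it suffices to check on the spanning set $\{\overline{a\ot f}\}\cup\{\overline{\mathrm{c}\ot f}\}$ of $\wh{\CC_\g}_\phi$ (the elements $\overline{(\partial^n a)\ot f}=(-1)^n\overline{a\ot(p\frac{d}{dx})^n f}$ reduce to the first type): the first case is clear, and the second uses the identity $\overline{\mathrm{c}\ot f}=\Res_x(f/p)\,\overline{\mathrm{c}\ot x^{-1}p}$ in $\wh{\CC_\g}_\phi$. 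This yields that $\Phi$ is a bijective Lie algebra homomorphism, hence an isomorphism.

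The main obstacle is the last identity, i.e.\ computing the central part $\overline{\mathrm{c}\ot\C((x))}$, which amounts to showing that $\C((x))/\mathrm{Im}(p(x)\frac{d}{dx})$ is one-dimensional and detected by $f\mapsto\Res_x(f/p)$. I would argue this through the residue pairing: $\Res_x((p f')/p)=\Res_x(f')=0$ shows $\mathrm{Im}(p\frac{d}{dx})\subseteq\ker(f\mapsto\Res_x(f/p))$, while conversely, if $\Res_x(h/p)=0$ then $h/p\in\C((x))$ has a formal antiderivative $g\in\C((x))$, whence $h=p(x)\frac{d}{dx}g$. Since $\Res_x(x^{-1}p/p)=1$, the quotient is spanned by the image of $x^{-1}p(x)$, which gives the displayed identity and pins down $\Phi$ on the center.
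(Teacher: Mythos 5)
Your proposal is correct and follows essentially the same route as the paper: the bracket-preservation computation and the key residue identity $\overline{\mathrm{c}\ot p(x)h(x)}=\Res_x(h(x))\,\overline{\mathrm{c}\ot x^{-1}p(x)}$ are exactly what the paper uses. The only difference is that the paper declares the linear bijectivity ``obvious,'' whereas you carefully justify it (descent of the map, explicit inverse, and one-dimensionality of $\C((x))/\mathrm{Im}(p(x)\frac{d}{dx})$ via the functional $f\mapsto\Res_x(f/p)$), which is a welcome but not essentially different elaboration.
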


\begin{proof}It is obvious that this assignment gives rise to a linear isomorphism, say $\varphi$, from $\wh{\CC_\g}_\phi$ to $\wh\g$. For any $a,b\in\g$, $f(x),g(x)\in\C((x))$, we have
\begin{equation*}\begin{split}
&\varphi([\overline{a\ot f(x)},\overline{b\ot g(x)}])\\
=&\varphi(\overline{[a,b]\ot f(x)g(x)}+(a,b)\overline{\mathrm{c}\ot p(x)(\frac{d}{dx}f(x))g(x)})\\
=&[a,b]\ot f(x)g(x)+(a,b)\mathrm{Res}_x((\frac{d}{dx}f(x))g(x))\mathbf{c}\\
=&[\varphi(\overline{a\ot f(x)}),\varphi(\overline{b\ot g(x)})],
\end{split}\end{equation*}
noticing that \[\overline{\mathrm{c}\ot p(x)(\frac{d}{dx}f(x))g(x)}=\overline{\mathrm{c}\ot x^{-1}p(x)}\mathrm{Res}_x((\frac{d}{dx}f(x))g(x)).\] Thus $\varphi$ is a Lie algebra isomorphism.
\end{proof}

  For any $\ell \in \C$,
we denote by
  \[V_{\wh\g}(\ell,0)=\U(\wh\g)\ot_{\U(\g\ot \C[[x]]\oplus \C\mathbf{c})}\C\]
  the affine vertex algebra of $\g$ with the central charge $\ell$, where $\g\ot \C[[x]]$ acts
  trivially on $\C$ and $\mathbf{c}$ acts as the scalar $\ell$ on $\C$.
  Note that $V_{\wh\g}(\ell,0)$ is naturally isomorphic to
  the quotient vertex algebra of $V_{\CC_\g}$ modulo the ideal generated by $\mathrm{c}-\ell$.
  We say that a $\wh\g$-module $W$ is restricted if for any $a\in \g, w\in W$, $(a\ot x^n).w=0$ for $n\gg 0$,
  and is of level $\ell$ if $\mathbf{c}$ acts as the scaler $\ell$ on $W$.
From Lemma \ref{lem:affiso} and
Theorem \ref{thm:re-m}, we have the following characterization of $\phi$-coordinated  $V_{\wh\g}(\ell,0)$-modules.

  \begin{prpt}\label{prop:affvaphimod} $\phi$-coordinated modules for the affine vertex algebra $V_{\wh\g}(\ell,0)$ are exactly restricted $\wh\g$-modules of level $\ell$.
 \end{prpt}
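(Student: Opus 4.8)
The plan is to assemble the proposition from two results already established—Theorem~\ref{thm:re-m} and Lemma~\ref{lem:affiso}—and then to track the central charge across the quotient presentation $V_{\wh\g}(\ell,0)=V_{\CC_\g}/I$, where $I$ denotes the ideal of $V_{\CC_\g}$ generated by $\mathrm{c}-\ell\vac$.

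First I would specialize Theorem~\ref{thm:re-m} to the conformal algebra $\CC_\g$, obtaining a bijective correspondence between $\phi$-coordinated $V_{\CC_\g}$-modules $W$ and restricted $\wh{\CC_\g}_\phi$-modules, under which $Y_W(a,z)=a_\phi(z)$ for all $a\in\CC_\g$. Transporting the action along the Lie algebra isomorphism $\varphi\colon\wh{\CC_\g}_\phi\to\wh\g$ of Lemma~\ref{lem:affiso} then converts a $\wh{\CC_\g}_\phi$-module into a $\wh\g$-module and back, using $\varphi(\overline{a\ot x^np(x)})=a\ot x^n p(x)$ and $\varphi(\overline{\mathrm{c}\ot x^{-1}p(x)})=\mathbf{c}$.

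The one substantive point is that the two meanings of \emph{restricted} coincide under $\varphi$. On the $\wh{\CC_\g}_\phi$ side it reads $(a\ot x^n p(x)).w=0$ for $n\gg0$ (after applying $\varphi$), while on the $\wh\g$ side it reads $(a\ot x^n).w=0$ for $n\gg0$. Since $p(x)$ is a unit in $\C((x))$, both $p(x)$ and $p(x)^{-1}$ shift the lowest degree of a series by a fixed integer; expanding $x^n p(x)$ in the $\{x^m\}$ and, conversely, $x^m$ in the $\{x^n p(x)\}$ shows each truncation condition implies the other. I expect this degree bookkeeping to be the only genuine verification, the rest being formal.

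Finally I would pin down the level. The defining relations of $\wh{\CC_\g}_\phi$ together with $\partial\mathrm{c}=0$ give $\overline{\mathrm{c}\ot x^n p(x)}=0$ for $n\neq-1$, so $\mathrm{c}_\phi(z)=\overline{\mathrm{c}\ot x^{-1}p(x)}$ is a constant operator, corresponding to $\mathbf{c}$ under $\varphi$. Hence $Y_W(\mathrm{c},z)=\overline{\mathrm{c}\ot x^{-1}p(x)}$, and a $\phi$-coordinated $V_{\CC_\g}$-module descends to $V_{\wh\g}(\ell,0)$ precisely when $I$ acts as zero. As $\{v : Y_W(v,z)=0\}$ is an ideal of $V_{\CC_\g}$, this happens if and only if $Y_W(\mathrm{c}-\ell\vac,z)=0$, i.e.\ $\mathbf{c}$ acts as the scalar $\ell$. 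Composing the three correspondences yields exactly the restricted $\wh\g$-modules of level $\ell$, as claimed.
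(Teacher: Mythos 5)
Your proposal is correct and takes essentially the same approach as the paper, whose entire proof of this proposition is the one-line combination of Theorem \ref{thm:re-m} and Lemma \ref{lem:affiso} together with the identification of $V_{\wh\g}(\ell,0)$ with the quotient of $V_{\CC_\g}$ by the ideal generated by $\mathrm{c}-\ell\vac$. Your two added verifications (matching the two notions of restrictedness, and tracking the level via the ideal $\{v: Y_W(v,z)=0\}$) are precisely the details the paper leaves implicit; just note that your degree bookkeeping for restrictedness tacitly uses that, under the correspondence coming from Lemma \ref{lem:resmod}, a general element $a\ot f(x)$ acts by expanding $p(x)^{-1}f(x)$ in powers of $x$, which is indeed how the paper's construction defines the action.
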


 More generally, assume that $G$ is a group acting on $\g$ as an automorphism group preserving the bilinear form
$(\cdot,\cdot)$ such that
for $a,b\in \g$,
\begin{align}\label{conconvar}
[ga,b]=0\quad \te{and}\quad (ga,b)=0\quad \te{for all but finitely many}\ g\in G.
\end{align}
Let $\chi,\chi_\phi:G\rightarrow \C^\times$ be any linear characters satisfy the relation \eqref{chi-chi_phi}.
We lift the $G$-action from $\g$ to  $\wh\g$ by
\begin{align}
g(a\ot f(x)+\beta \mathbf{c})=(ga\ot f(\chi_\phi(g)^{-1}x))+\beta \mathbf{c}
\end{align}
for $g\in G$, $a\in \g$, $f(x)\in \C((x))$, $\beta\in \C$.
From \cite{Li3}, the {\em $(G,\chi_\phi)$-covariant algebra}  of $\wh\g$
is the Lie algebra $\wh\g[G]$, where
\begin{align}\label{kgamma}
\wh\g[G]=\wh\g/{\rm span}\{gu-u\mid g\in G, u\in \wh\g\}
\end{align}
as a vector space and the Lie bracket  is given by
\begin{align*}
[\overline{u},\overline{v}]=\sum_{g\in G} \overline{[gu,v]}\quad \te{for}\ u,v\in \wh\g,
\end{align*}
where for $u\in \wh\g$, $\overline{u}$ stands for the image of $u$ in $\wh\g[G]$
under the natural quotient map
$\wh\g\rightarrow \wh\g[G]$.
  We define a $G$-action  on $\CC_\g$ by
\[R_g(\partial^n\ot a)=\chi(g)^{n+1}\partial^n\ot ga\quad\te{and}\quad R_g(\mathrm{c})=\mathrm{c}\quad\te{for }g\in G, a\in\g, n\in\N.\]
Then   $(\CC_\g,R)$ is a   $(G,\chi)$-conformal algebra and so $(V_{\CC_\g},R)$ is a $(G,\chi)$-vertex algebra.
A similar proof as that in Lemma \ref{lem:affiso}, we have
\begin{lemt}\label{lem:twisted-af}  The assignment $(a\in \g,f(x)\in \C((x)))$
 \begin{align*}
 \overline{a\ot f(x)}\mapsto \overline{a\ot f(x)},\quad \overline{\mathrm{c}\ot x^{-1}p(x)}\mapsto \overline{\mathbf{c}}
 \end{align*}
determines a
 Lie algebra isomorphism from $\widehat{\CC_\g}_\phi[G]$ to $\wh\g[G]$.
\end{lemt}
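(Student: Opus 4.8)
The plan is to promote the linear isomorphism $\varphi:\wh{\CC_\g}_\phi\to\wh\g$ of Lemma \ref{lem:affiso} to the level of the $G$-quotients. The key observation will be that $\varphi$ intertwines the two $G$-actions, i.e.\ that $\varphi\circ\hat R_g=g\circ\varphi$ for every $g\in G$, where on the left $\hat R_g$ is the automorphism of $\wh{\CC_\g}_\phi$ induced by the $(G,\chi)$-conformal algebra structure and on the right $g$ denotes the lifted action on $\wh\g$. Granting this, $\varphi$ carries $\te{span}\{\hat R_g(u)-u\}$ onto $\te{span}\{gv-v\}$, hence descends to a linear isomorphism $\bar\varphi:\wh{\CC_\g}_\phi[G]\to\wh\g[G]$; and since the brackets on both quotients are built from the respective original brackets twisted by the group actions, the fact that $\varphi$ is a Lie isomorphism together with the intertwining property will immediately give that $\bar\varphi$ is a Lie isomorphism.

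To establish the intertwining property I would compute directly on generators. For $a\in\g$ identified with $\partial^0\ot a\in\CC_\g$ one has $R_g(a)=\chi(g)(ga)$, so that
\[
\hat R_g(a\ot f(x))=\chi(g)^{-1}R_g(a)\ot f(\chi_\phi(g)^{-1}x)=(ga)\ot f(\chi_\phi(g)^{-1}x),
\]
whose image under $\varphi$ is $(ga)\ot f(\chi_\phi(g)^{-1}x)=g\varphi(\overline{a\ot f(x)})$, as required. For the central generator I would use \eqref{chi-chi_phi:p(x)} in the form $p(\chi_\phi(g)^{-1}x)=\chi(g)\chi_\phi(g)^{-1}p(x)$ to check that $\hat R_g$ fixes $\overline{\mathrm{c}\ot x^{-1}p(x)}$: the factor $\chi(g)^{-1}$ from the definition of $\hat R_g$, the factor $\chi_\phi(g)$ produced by substituting $x\mapsto\chi_\phi(g)^{-1}x$ into $x^{-1}$, and the factor $\chi(g)\chi_\phi(g)^{-1}$ coming from $p$ multiply to $1$. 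This matches the fact that the $G$-action on $\wh\g$ fixes $\mathbf{c}$.

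Finally, with $\bar\varphi$ a well-defined linear isomorphism, the bracket identity on $\wh{\CC_\g}_\phi[G]$ yields
\[
\bar\varphi([\overline u,\overline v])=\overline{\textstyle\sum_{g\in G}\varphi([\hat R_g(u),v])}=\overline{\textstyle\sum_{g\in G}[g\varphi(u),\varphi(v)]}=[\bar\varphi(\overline u),\bar\varphi(\overline v)],
\]
where the middle equality uses both that $\varphi$ preserves the original bracket and the intertwining property. I expect the only genuinely delicate point to be the bookkeeping of the characters for the central element, which is forced by \eqref{chi-chi_phi:p(x)}; everything else is a formal transfer of Lemma \ref{lem:affiso} through the quotient.
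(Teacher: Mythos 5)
Your proposal is correct and follows essentially the same route as the paper, which likewise deduces the lemma from the computation behind Lemma \ref{lem:affiso} combined with the identity $\hat R_g(a\ot f(x))=(ga)\ot f(\chi_\phi(g)^{-1}x)$ (using $R_g(a)=\chi(g)(ga)$) and the character bookkeeping \eqref{chi-chi_phi:p(x)} for the central element. The only difference is organizational: you make the $G$-equivariance of $\varphi$, and hence the well-definedness of the induced map on the quotients, explicit and then descend formally, whereas the paper carries out the corresponding bracket computation directly in $\wh{\CC_\g}_\phi[G]$.
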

 As $R_g(\mathrm{c})=\mathrm{c}$ for $g\in G$, $V_{\wh\g}(\ell,0)$ becomes a $(G,\chi)$-vertex algebra with the $G$-action $R$ transferred from
  that of $V_{\CC_\g}$.
As before, we can define the notion ``restricted $\wh\g[G]$-module of level $\ell$".
Then from Lemma \ref{lem:twisted-af} and
Theorem \ref{thm:G-re-m}, we immediately have the following generalization of Proposition \ref{prop:affvaphimod}.

\begin{prpt}\label{wh-g-G} Let $\chi,\chi_\phi$ be linear characters of $G$ with $\chi_\phi$ is injective and  the relation \eqref{chi-chi_phi} hold.
Then $(G,\chi_\phi)$-equivariant $\phi$-coordinated quasi modules for the affine $(G,\chi)$-vertex algebra $V_{\wh\g}(\ell,0)$
are exactly restricted $\wh\g[G]$-modules of level $\ell$.
\end{prpt}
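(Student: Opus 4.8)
The plan is to mirror the proof of Proposition~\ref{prop:affvaphimod} in the equivariant setting, replacing Lemma~\ref{lem:affiso} and Theorem~\ref{thm:re-m} by their $G$-equivariant counterparts, Lemma~\ref{lem:twisted-af} and Theorem~\ref{thm:G-re-m}. Since $\chi_\phi$ is assumed injective, its kernel $H=\ker\chi_\phi$ is trivial, so that $G/H=G$, $\bar\chi=\chi$, $\bar\chi_\phi=\chi_\phi$, and $\CC_\g/H=\CC_\g$. Applying Theorem~\ref{thm:G-re-m} to the $(G,\chi)$-conformal algebra $(\CC_\g,R)$ will therefore identify the restricted $\widehat{\CC_\g}_\phi[G]$-modules with the $(G,\chi_\phi)$-equivariant $\phi$-coordinated quasi $V_{\CC_\g}$-modules, with $Y_W(a,z)=a^G_\phi(z)$ for $a\in\CC_\g$.

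The next step is to transport this identification along the Lie algebra isomorphism $\widehat{\CC_\g}_\phi[G]\cong\wh\g[G]$ of Lemma~\ref{lem:twisted-af}, under which the mode $\overline{a\ot x^np(x)}$ is sent to $\overline{a\ot x^np(x)}$ and the central generator $\overline{\mathrm{c}\ot x^{-1}p(x)}$ to $\overline{\mathbf{c}}$. Here I would first check that the two notions of ``restricted'' agree: for a $\wh\g[G]$-module $W$, the condition $(a\ot x^n)w=0$ for $n\gg 0$ and the condition $(a\ot x^np(x))w=0$ for $n\gg 0$ are equivalent. This uses that $p(x)$ is a unit in the field $\C((x))$, so that expanding $x^np(x)=\sum_{m\ge m_0}c_m x^{n+m}$ and $x^n=\sum_{m\ge m_1}d_m x^{n+m}p(x)$ (via $p(x)^{-1}$) shows each family has support in arbitrarily high powers of $x$ as $n\to\infty$. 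This gives a bijection between restricted $\widehat{\CC_\g}_\phi[G]$-modules and restricted $\wh\g[G]$-modules.

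Finally I would pass to the level-$\ell$ quotient. Because $R_g\mathrm{c}=\mathrm{c}$, the ideal of $V_{\CC_\g}$ generated by $\mathrm{c}-\ell$ is $G$-stable, and $V_{\wh\g}(\ell,0)$ is the resulting quotient $(G,\chi)$-vertex algebra; consequently a $(G,\chi_\phi)$-equivariant $\phi$-coordinated quasi $V_{\wh\g}(\ell,0)$-module is exactly such a quasi $V_{\CC_\g}$-module on which $\mathrm{c}$ acts as the scalar $\ell$. Since $\partial\mathrm{c}=0$, the element $\partial\ot 1+1\ot p(x)\frac{d}{dx}$ kills $\mathrm{c}\ot x^np(x)$ for all $n\neq-1$, so $\mathrm{c}^G_\phi(z)=\overline{\mathrm{c}\ot x^{-1}p(x)}$ is a constant corresponding to $\overline{\mathbf{c}}$; hence $Y_W(\mathrm{c},z)=\ell$ is equivalent to $\overline{\mathbf{c}}$ acting as $\ell$, i.e.\ to $W$ being of level $\ell$. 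Combining the three steps yields the claimed equivalence.

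Since the statement is essentially an application of the already-established Theorem~\ref{thm:G-re-m} and Lemma~\ref{lem:twisted-af}, there is no deep obstacle; the only genuine points to verify are the equivalence of the two restriction conditions and the matching of the central element with the level $\ell$, both of which hinge on $p(x)$ being a unit in $\C((x))$ and on the computation that $\overline{\mathrm{c}\ot x^np(x)}$ vanishes except when $n=-1$.
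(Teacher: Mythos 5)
Your proposal is correct and follows essentially the same route as the paper: the paper's entire proof is the observation that the statement follows ``immediately'' from Lemma~\ref{lem:twisted-af} and Theorem~\ref{thm:G-re-m}, exactly the two ingredients you combine (with $H=\ker\chi_\phi$ trivial so that $G/H=G$ and $\CC_\g/H=\CC_\g$). The additional verifications you supply --- the equivalence of the two restrictedness conditions via invertibility of $p(x)$ in $\C((x))$, and the identification of $\overline{\mathrm{c}\ot x^{-1}p(x)}$ with $\overline{\mathbf{c}}$ so that level $\ell$ matches the action of $\mathrm{c}-\ell$ --- are precisely the routine details the paper leaves implicit.
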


\begin{remt} We illustrate two examples for the Proposition \ref{wh-g-G}(cf.\cite{Li3}).
First, let $\sigma$ be an automorphism of $\g$ which has order $M$ and preserves the form $(\cdot,\cdot)$, and
let $\omega$ be a primitive $M$-th root of unity.
Set $G=\<\sigma\>$ and take $\chi_\phi$ to be the character of $G$  determined by $\chi_\phi(\sigma)=\omega$.
Then $\wh\g[G]$ is isomorphic to the $\sigma$-twisted affine Lie algebra
 \[\wh{\g}[\sigma]=\{u\in \wh\g\mid \hat{\sigma}(u)=u\},\]
 where $\hat{\sigma}$ is the automorphism of $\wh{\g}$ defined by
 $\hat{\sigma}(\mathbf{c})=\mathbf{c}$ and
 $\hat{\sigma}(a\ot f(x))=\sigma(a)\ot f(\omega^{-1}x),$ $a\in \g, f(x)\in \C((x))$.

Second, let $\g=\fgl_\infty$ be the Lie algebra of  $\Z\times\Z$-matrices which have only finitely many nonzero entries
 and set $(x,y)=Tr(xy)$. For any positive integer $N$, define an automorphism $\sigma_N$ on $\fgl_\infty$ by
$\sigma_N(E_{i,j})=E_{i+N,j+N}$ for $i,j\in \Z$, where $E_{i,j}$ stands for the usual unit matrix.
 Set $G=\<\sigma_N\>$ and take $\chi_\phi$ to be the character of $G$ determined by $\chi_\phi(\sigma_N)=q^{-1}$ with $q$ is not a root of unit.
As $G\cong \Z$ is an infinite group, the relation \eqref{chi-chi_phi} forces that $p(x)$ must be a homogenous polynomial in $\C[x,x^{-1}]$ (\cite{JKLT}).
Thus we may define $\wh\g$ to be the usual affine Lie algebra $\g\ot \C[x,x^{-1}]\oplus \C\mathbf{c}$.
Then $\wh\g[G]$ is isomorphic to a one dimensional central extension of the $N\times N$-matrix algebra $\fgl_N(\C_q)$ over a $2$-dimensional
quantum torus $\C_q$,
where $\C_q=\C[x^{\pm{1}},y^{\pm1}]$ as vector space with the multiplication given by $(x^my^n)(x^sy^t)=q^{ns}x^{m+s}y^{n+t}$ (cf.\cite{Li3,CLTW}).
\end{remt}

Next, we consider the $\phi$-coordinated modules for the Virasoro vertex algebra.
Let
\begin{align*}
\mathcal{V}=\C((x))\frac{d}{dx}\oplus \C\mathbf{c}
\end{align*}
be (formal) Virasoro Lie algebra, where $\mathbf{c}$ is central and for $f(x),g(x)\in \C((x))$,
\begin{align*}
[f(x)\frac{d}{dx},g(x)\frac{d}{dx}]=&f(x)(\frac{d}{dx}g(x))\frac{d}{dx}
-g(x)(\frac{d}{dx}f(x))\frac{d}{dx}\\
&+\frac{\mathbf{c}}{2} \mathrm{Res}_x ((\frac{d}{dx})^{(3)}f(x))g(x).
\end{align*}
On the other hand, we have the  {\em Virasoro conformal algebra} (\cite{K})
\[\CC_\mathcal{V}
=(\C[\partial]\ot \C L)\op\C \mathrm{c},\]
where for $n\in \N$, $\partial(\partial^n\ot L)=\partial^{n+1}\ot L,$ $\partial \mathrm{c}=0$ and
\begin{align*}
L_nL=\delta_{n,0}\partial L+\delta_{n,1}2L+\delta_{n,3}\frac{\mathrm{c}}{2},\quad
L_n\mathrm{c}=0=\mathrm{c}_n\mathrm{c}.
\end{align*}
We identify $L$ as $1\ot L\in\CC_\mathcal{V}$.
It is straightforward to see that the Lie algebra $\wh{\CC_\mathcal{V}}_\phi$ is spanned by
the following elements
\begin{align*}
\overline{\mathrm{c}\ot x^{-1}p(x)},\quad \overline{L\ot f(x)},\ f(x)\in \C((x)).
\end{align*}
Furthermore, we have

  \begin{lemt}\label{lem:viriso} The assignment
 \begin{align*}
 \overline{L\ot f(x)}\mapsto p(x)f(x)\frac{d}{dx}+\alpha_\phi(f(x))\mathbf{c},\quad \overline{\mathrm{c}\ot x^{-1}p(x)}\mapsto \mathbf{c}
 \end{align*}
determines a
 Lie algebra isomorphism from $\wh{\CC_\mathcal{V}}_\phi$ to $\mathcal{V}$, where $\alpha_\phi:\C((x))\rightarrow\C$
 is a linear map defined by
 $$f(x)\mapsto\frac{1}{24}\Res_xf(x)\big(2(\frac{d}{dx})^2p(x)-p(x)^{-1}(\frac{d}{dx}p(x))^2\big).$$
 \end{lemt}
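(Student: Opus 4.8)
The plan is to mirror the proof of Lemma~\ref{lem:affiso}: first determine the vector-space structure of $\wh{\CC_\mathcal{V}}_\phi$ so that the assignment is visibly a well-defined linear isomorphism, and then verify that it intertwines the two Lie brackets, the only delicate point being the central term. Throughout, primes denote $\tfrac{d}{dx}$.

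First I would record the two relations in $\wh{\CC_\mathcal{V}}_\phi$ coming from $\mathrm{Im}(\partial\ot 1+1\ot p(x)\tfrac{d}{dx})$. On the $L$-part, since $\partial(\partial^n\ot L)=\partial^{n+1}\ot L$, one has $\overline{\partial L\ot h}=-\overline{L\ot p h'}$ for $h\in\C((x))$, which recursively reduces every $\overline{\partial^n L\ot f}$ to some $\overline{L\ot(\cdots)}$; on the central part, $\partial\mathrm{c}=0$ gives $\overline{\mathrm{c}\ot p h'}=0$, hence $\overline{\mathrm{c}\ot g}=\Res_x(g/p)\,\overline{\mathrm{c}\ot x^{-1}p}$. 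This proves the spanning statement preceding the lemma. Because $\partial\ot1+1\ot p\tfrac{d}{dx}$ respects the decomposition $\CC_\mathcal{V}=(\C[\partial]\ot\C L)\op\C\mathrm{c}$, the two blocks remain independent after passing to the quotient; a short triangular argument (writing $L\ot f=(\partial\ot1+1\ot p\tfrac{d}{dx})\xi$ with $\xi=\sum_n\partial^nL\ot\xi_n$ a finite sum and solving from the top index down forces all $\xi_n=0$ and $f=p\xi_0'=0$) shows $f\mapsto\overline{L\ot f}$ is injective and the central block is one dimensional. Consequently the assignment is well defined, and since $p\ne0$ makes $f\mapsto pf$ a bijection of $\C((x))$, the vector fields $pf\tfrac{d}{dx}$ fill out $\C((x))\tfrac{d}{dx}$; together with $\overline{\mathrm{c}\ot x^{-1}p}\mapsto\mathbf{c}$ this yields a linear isomorphism onto $\mathcal{V}$, the correction $\alpha_\phi(f)\mathbf{c}$ being an innocuous shift.

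It then remains to match brackets. Using \eqref{lb-C_p0} together with $L_0L=\partial L$, $L_1L=2L$, $L_3L=\tfrac{\mathrm{c}}{2}$ and the reduction rule above, I compute
\[[\overline{L\ot f},\overline{L\ot g}]=\overline{L\ot p(f'g-fg')}+\Psi(f,g)\,\overline{\mathrm{c}\ot x^{-1}p},\qquad \Psi(f,g)=\tfrac{1}{12}\Res_x\big(p^{-1}((p\tfrac{d}{dx})^3f)\,g\big).\]
On the other side, since $\mathbf{c}$ is central, $[\,pf\tfrac{d}{dx}+\alpha_\phi(f)\mathbf{c},\,pg\tfrac{d}{dx}+\alpha_\phi(g)\mathbf{c}\,]$ is just the Virasoro bracket of the two vector fields, whose central term is $\omega(f,g):=\tfrac{1}{12}\Res_x (pf)'''(pg)$. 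Comparing the two expressions, the vector-field terms are identified immediately, while the central terms require the identity $\Psi(f,g)-\omega(f,g)=-\,\alpha_\phi(p(f'g-fg'))$.

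This cocycle identity is the heart of the matter and the step I expect to be the main obstacle. I would prove it by expanding $(p\tfrac{d}{dx})^3f=p\big((p')^2+pp''\big)f'+3p^2p'f''+p^3f'''$ and $(pf)'''=p'''f+3p''f'+3p'f''+pf'''$; after dividing the first by $p$ and subtracting, the $f'''$ and $f''$ contributions cancel and $\Psi-\omega=\tfrac{1}{12}\Res_x\big(((p')^2-2pp'')f'g-pp'''fg\big)$. Setting $A=(p')^2-2pp''$ and integrating by parts inside $\Res_x$ (valid because the residue of a derivative is zero), the target $-\alpha_\phi(p(f'g-fg'))=\tfrac{1}{24}\Res_x A(f'g-fg')$ rewrites as $\tfrac{1}{12}\Res_x Af'g+\tfrac{1}{24}\Res_x A'fg$, so the two sides coincide precisely when $A'=-2pp'''$; and indeed $A'=\big((p')^2-2pp''\big)'=-2pp'''$. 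This is exactly what forces the Schwarzian-type weight $2p''-p^{-1}(p')^2$ in $\alpha_\phi$. Apart from this residue bookkeeping the argument is entirely formal.
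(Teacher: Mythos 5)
Your proposal is correct and takes essentially the same approach as the paper's proof: the same reduction of $[\overline{L\ot f},\overline{L\ot g}]$ via \eqref{lb-C_p0} and the quotient relations, followed by a term-by-term comparison with $[pf\frac{d}{dx},pg\frac{d}{dx}]$, and your cocycle identity $\Psi(f,g)-\omega(f,g)=-\alpha_\phi(p(f'g-fg'))$ (proved by integration by parts and $((p')^2-2pp'')'=-2pp'''$) is exactly the residue identity that the paper verifies by brute expansion, recognizing $(pf)'''pg$. You are, if anything, more careful than the paper, which dismisses the linear-isomorphism step as obvious and leaves the absorption of the $\alpha_\phi$-terms into the central residue unexplained.

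One caveat, which affects your write-up and the paper's proof in exactly the same way: the vector-field terms are \emph{not} ``identified immediately''. Under the bracket on $\mathcal{V}$ as the paper defines it, the image of $[\overline{L\ot f},\overline{L\ot g}]$ has $\frac{d}{dx}$-component $p^2(f'g-fg')\frac{d}{dx}$, whereas
\begin{equation*}
\Big[\,pf\frac{d}{dx},\,pg\frac{d}{dx}\,\Big]=p^2(fg'-f'g)\frac{d}{dx}+\omega(f,g)\,\mathbf{c},
\end{equation*}
so the two differ by a sign; the paper conceals this by silently replacing $p^2(f'g-fg')\frac{d}{dx}$ with $p^2(fg'-f'g)\frac{d}{dx}$ between its second and third displayed equalities. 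The lemma becomes literally correct if the assignment is changed to $\overline{L\ot f}\mapsto -pf\frac{d}{dx}+\alpha_\phi(f)\mathbf{c}$ (keeping $\overline{\mathrm{c}\ot x^{-1}p(x)}\mapsto\mathbf{c}$), or equivalently if the sign convention in the vector-field part of the bracket of $\mathcal{V}$ is reversed. Note that your central-term identity is unaffected by this correction: neither $\omega(f,g)$ nor the argument $p(f'g-fg')$ of $\alpha_\phi$ changes under the flip, so the computation you carried out is precisely the one the corrected statement needs.
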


\begin{proof}Obviously, this assignment gives rise to a linear isomorphism $\varphi: \wh{\CC_\mathcal{V}}_\phi\rightarrow\mathcal{V}$. Now we prove that $\varphi$ is a Lie algebra homomorphism. For any $f(x),g(x)\in\C((x))$, we have
\begin{eqnarray*}
&&\quad\varphi\Big([\overline{L\ot f(x)},\overline{L\ot g(x)}]\Big)\\
&&=\varphi\Big(-\overline{L\ot p(x)\frac{d}{dx}(f(x)g(x))}+2\overline{L\ot p(x)(\frac{d}{dx}f(x))g(x)}\\
&&\quad+\frac{1}{2}\overline{\mathrm{c}\ot((p(x)\frac{d}{d x})^{(3)}f(x))g(x)}\Big)\\
&&=\varphi\Big(\overline{L\ot p(x)g(x)\frac{d}{dx}f(x)}-\overline{L\ot p(x)f(x)\frac{d}{dx}g(x)}\\
&&\quad+\frac{1}{12}\overline{\mathrm{c}\ot x^{-1}p(x)}\Res_x\Big((\frac{d}{dx}p(x))^2\frac{d}{dx}f(x)+3p(x)(\frac{d}{dx}p(x))(\frac{d}{dx})^2f(x)\\
&&\quad+p(x)((\frac{d}{dx})^2p(x))\frac{d}{dx}f(x)+p(x)^2(\frac{d}{dx})^3f(x)\Big)g(x)\Big)\\
&&=p(x)^2f(x)(\frac{d}{dx}g(x))\frac{d}{dx}-p(x)^2g(x)(\frac{d}{dx}f(x))\frac{d}{dx}\\
&&\quad+\frac{1}{12} \mathbf{c}\Res_x\Big(f(x)(\frac{d}{dx})^3p(x)+3(\frac{d}{dx}p(x))(\frac{d}{dx})^2f(x)\\
&&\quad+3((\frac{d}{dx})^2p(x))\frac{d}{dx}f(x)+p(x)(\frac{d}{dx})^3f(x)\Big)p(x)g(x)\\
&&=p(x)^2f(x)(\frac{d}{dx}g(x))\frac{d}{dx}-p(x)^2g(x)(\frac{d}{dx}f(x))\frac{d}{dx}+\frac{\mathbf{c}}{2} \mathrm{Res}_x (\frac{d}{dx})^{(3)}(p(x)f(x))p(x)g(x)\\
&&=[ p(x)f(x)\frac{d}{dx}, p(x)g(x)\frac{d}{dx}]=[\varphi(\overline{L\ot f(x)}),\varphi(\overline{L\ot g(x)})],
\end{eqnarray*}
noticing that \[\overline{\mathrm{c}\ot p(x)h(x)}=\overline{\mathrm{c}\ot x^{-1}p(x)}\mathrm{Res}_xh(x)\quad\te{for any }h(x)\in\C((x)).\] Thus $\varphi$ is a Lie algebra isomorphism.
\end{proof}

 For any $\ell \in \C$,
  we denote by
  \[V_{\mathcal{V}}(\ell,0)=\U(\mathcal{V})\ot_{\U(\C[[x]]\frac{d}{dx}\oplus \C\mathbf{c})}\C\]
  the Virasoro vertex algebra with central charge $\ell$, where $\C[[x]]\frac{d}{dx}$ acts
  trivially on $\C$ and $\mathbf{c}$ acts as the scalar $\ell$ on $\C$.
Then $V_{\mathcal{V}}(\ell,0)$ is naturally isomorphic to
  the quotient vertex algebra of $V_{\CC_\mathcal{V}}$ modulo the ideal generated by $\mathrm{c}-\ell$.
  We say that a $\mathcal{V}$-module $W$ is restricted if for any $w\in W$, $(x^n\frac{d}{dx}).w=0$ for $n\gg 0$,
  and is of level $\ell$ if $\mathbf{c}$ acts as the scaler $\ell$ on $W$.
From Lemma \ref{lem:viriso} and
Theorem \ref{thm:re-m}, we immediately have the following  characterization
of $\phi$-coordinated $V_{\mathcal{V}}(\ell,0)$-modules.

  \begin{prpt}\label{prop:virvaphimod} $\phi$-coordinated modules for the Viraroso vertex algebra $V_{\mathcal{V}}(\ell,0)$ are exactly restricted $\mathcal{V}$-modules of level $\ell$.
 \end{prpt}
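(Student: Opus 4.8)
The plan is to obtain this characterization by specializing the general correspondence of Theorem \ref{thm:re-m} to the Virasoro conformal algebra $\CC_\mathcal{V}$ and then transporting it along the Lie algebra isomorphism of Lemma \ref{lem:viriso}, exactly as in the affine case of Proposition \ref{prop:affvaphimod}. First I would recall that $V_{\mathcal{V}}(\ell,0)$ is by definition the quotient of $V_{\CC_\mathcal{V}}$ by the ideal generated by $\mathrm{c}-\ell$. By Theorem \ref{thm:re-m} applied to $\CC=\CC_\mathcal{V}$, the $\phi$-coordinated $V_{\CC_\mathcal{V}}$-modules are exactly the restricted $\wh{\CC_\mathcal{V}}_\phi$-modules, with $Y_W(a,z)=a_\phi(z)$ for $a\in\CC_\mathcal{V}$. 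To pin down the central condition, note that $\partial\mathrm{c}=0$, so the relation defining $\wh{\CC_\mathcal{V}}_\phi$ forces $\overline{\mathrm{c}\ot p(x)g'(x)}=0$ for every $g\in\C((x))$; since $x^n=g'(x)$ is solvable for $n\neq-1$ but not for $n=-1$, this gives $\overline{\mathrm{c}\ot x^np(x)}=0$ for $n\neq-1$, so that $\mathrm{c}_\phi(z)=\overline{\mathrm{c}\ot x^{-1}p(x)}$ is the constant field given by the central element. Consequently a $\phi$-coordinated $V_{\CC_\mathcal{V}}$-module descends to $V_{\mathcal{V}}(\ell,0)$ precisely when $\overline{\mathrm{c}\ot x^{-1}p(x)}$ acts as the scalar $\ell$. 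Under $\varphi:\wh{\CC_\mathcal{V}}_\phi\to\mathcal{V}$, which sends $\overline{\mathrm{c}\ot x^{-1}p(x)}$ to the central element $\mathbf{c}$, this condition becomes exactly ``level $\ell$''.

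It then remains only to check that $\varphi$ matches the two notions of \emph{restricted}. Restrictedness for $\wh{\CC_\mathcal{V}}_\phi$ means $\overline{L\ot x^np(x)}.w=0$ for $n\gg0$, whereas restrictedness for $\mathcal{V}$ means $x^m\frac{d}{dx}.w=0$ for $m\gg0$. Since $\varphi(\overline{L\ot f(x)})=p(x)f(x)\frac{d}{dx}+\alpha_\phi(f(x))\mathbf{c}$, the element $\overline{L\ot x^np(x)}$ corresponds to $x^np(x)^2\frac{d}{dx}$ up to a central scalar, while $x^m\frac{d}{dx}$ corresponds to $\overline{L\ot x^mp(x)^{-1}}$ up to a central scalar. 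I would expand $p(x)^2$ and $p(x)^{-2}$ as Laurent series with finite principal parts: each member of one of the two $\Z$-families, acting on a fixed vector of a restricted module, is then a finite linear combination of members of the other family whose indices are bounded below by the original index plus a fixed constant, so annihilation for large index transfers in both directions. The accompanying central scalars $\alpha_\phi(x^np(x))$ and $\alpha_\phi(x^mp(x)^{-1})$ are residues of $x^n$ (resp.\ $x^m$) against a fixed element of $\C((x))$ with finite principal part, hence vanish for large index and do not affect the argument.

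Combining these observations, restricted $\wh{\CC_\mathcal{V}}_\phi$-modules on which $\overline{\mathrm{c}\ot x^{-1}p(x)}$ acts as $\ell$ correspond under $\varphi$ exactly to restricted $\mathcal{V}$-modules of level $\ell$, which yields the proposition. The only genuine obstacle is the bookkeeping in the middle paragraph: because $\varphi$ rescales the loop coordinate by the factor $p(x)$, the natural spanning elements on the two sides differ, and one must verify that the field/lower-truncation condition is preserved in both directions. Once this is settled using the leading terms of $p(x)^{\pm2}$, everything else is a direct specialization of Theorem \ref{thm:re-m} together with the central-charge identification furnished by Lemma \ref{lem:viriso}.
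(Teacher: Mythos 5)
Your proof is correct and follows essentially the same route as the paper: Proposition \ref{prop:virvaphimod} is obtained there in one line by combining Theorem \ref{thm:re-m} (restricted $\wh{\CC_\mathcal{V}}_\phi$-modules correspond to $\phi$-coordinated $V_{\CC_\mathcal{V}}$-modules) with the Lie algebra isomorphism $\wh{\CC_\mathcal{V}}_\phi\cong\mathcal{V}$ of Lemma \ref{lem:viriso}, the level-$\ell$ condition coming from the quotient of $V_{\CC_\mathcal{V}}$ by the ideal generated by $\mathrm{c}-\ell$. Your two elaborations --- that $\mathrm{c}_\phi(z)$ reduces to the constant $\overline{\mathrm{c}\ot x^{-1}p(x)}$ because $\overline{\mathrm{c}\ot x^np(x)}=0$ for $n\neq -1$, and that restrictedness transfers under $\varphi$ since the two spanning families differ by the factor $p(x)^{\pm 2}$ together with central terms $\alpha_\phi(\cdot)$ that vanish for large index --- are precisely the details the paper compresses into the word ``immediately''.
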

 \begin{remt} When $p(x)=x$, Propositions \ref{prop:affvaphimod} and \ref{prop:virvaphimod} were proved in \cite{CLT} by a different method.
 \end{remt}

Finally we consider the $\phi$-coordinated modules of the vertex algebras associated to Novikov algebras.
Let $\CA$ be a Novikov algebra, in the sense that $\CA$ is a non-associative algebra satisfying \[(ab)c-a(bc)=(ba)c-b(ac),\quad (ab)c=(ac)b\quad \te{for }a,b,c\in\CA,\]
and let $\langle\cdot,\cdot\rangle$ be a symmetric bilinear form of $\CA$ satisfying
\begin{equation}\label{Novikov-form}\begin{split}
\langle ab,c\rangle=\langle a,bc\rangle,\quad \langle ab,c\rangle=\langle ba,c\rangle\quad \te{for } a,b,c\in\CA.
\end{split}\end{equation}
Similar to the algebra $\wh\g$, we also have an ``untwisted affine Lie algebra"
\begin{align*}
\wh{\CA}=\CA\ot \C((x))\oplus \C\mathbf{c}
\end{align*}
of $\CA$ (\cite{BLP}), where $\mathbf{c}$ is central and for $a,b\in\CA, f(x),g(x)\in\C((x))$,
\begin{equation*}\begin{split}
[a\ot f(x),b\ot g(x)]=&ab\ot\left(\frac{d}{d x}f(x)\right)g(x)-ba\ot\left(\frac{d}{d x}g(x)\right)f(x)\\
&+\frac{\mathbf{c}}{2}\<a,b\> \mathrm{Res}_x\ \big((\frac{d}{dx})^{(3)}f(x)\big)g(x).
\end{split}\end{equation*}
On the other hand, following \cite{P}, we have the conformal algebra
\[\CC_\CA=\C[\partial]\ot \CA\op\C \mathrm{c},\]
where for $a,b\in\CA$, $n\in \N$, $\partial(\partial^n\ot a)=\partial^{n+1}\ot a,$ $\partial \mathrm{c}=0$ and
\begin{align*}
a_nb=\delta_{n,0}\partial (ba)+\delta_{n,1}(ab+ba)+\delta_{n,3}\<a,b\>\frac{\mathrm{c}}{2},\quad
a_n\mathrm{c}=0=\mathrm{c}_n\mathrm{c}.
\end{align*}
Note that there is a linear isomorphism from $\wh{\CC_\CA}_\phi\rightarrow \wh{\CA}$ such that
$\overline{a\ot f(x)}\rightarrow a\ot f(x)$ for $a\in \CA, f(x)\in \C((x))$  and that $\overline{\mathrm{c}\ot x^{-1}p(x)}\mapsto \mathbf{c}$.
Via this isomorphism, we obtain a new Lie algebra structure on $\wh{\CA}$ such that
\begin{equation*}\begin{split}
[a\ot f(x),b\ot g(x)]=&ab\ot\left(p(x)\frac{d}{d x}f(x)\right)g(x)-ba\ot\left(p(x)\frac{d}{d x}g(x)\right)f(x)\\
&+\frac{\mathbf{c}}{2}\<a,b\> \mathrm{Res}_x\ p(x)^{-1}\big((p(x)\frac{d}{dx})^{(3)}f(x)\big)g(x)
\end{split}\end{equation*}
for $a,b\in\CA, f(x),g(x)\in\C((x))$ and that $\mathbf{c}$ is central.
We denote this Lie algebra by $\wh{\CA}_\phi$.

For any $\ell\in \C$, just as $V_{\wh\g}(\ell,0)$, we have an affine type vertex algebra $V_{\wh\CA}(\ell,0)$ (\cite{BLP})
which is isomorphic to the quotient vertex algebra $V_{\CC_\CA}/\<\mathrm{c}-\ell\>$.
We say that a $\wh{\CA}_\phi$-module  $W$ is restricted if for any $a\in \CA, w\in W$, $(a\ot x^n).w=0$ for $n\gg 0$,
  and is of level $\ell$ if $\mathbf{c}$ acts as the scaler $\ell$ on $W$.
In view of Theorem \ref{thm:re-m}, we have the following slight generalization of a result in \cite{BLP}:
  \begin{prpt} $\phi$-coordinated $V_{\wh\CA}(\ell,0)$-modules are exactly
  restricted $\wh\CA_\phi$-modules of level $\ell$.
\end{prpt}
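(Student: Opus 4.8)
The plan is to follow verbatim the strategy used for the affine and Virasoro vertex algebras in Propositions~\ref{prop:affvaphimod} and~\ref{prop:virvaphimod}: transport the problem to the conformal algebra $\CC_\CA$, invoke Theorem~\ref{thm:re-m}, and then cut down to level $\ell$ using the identification $V_{\wh\CA}(\ell,0)\cong V_{\CC_\CA}/\<\mathrm{c}-\ell\>$. Concretely, the linear map $\overline{a\ot f(x)}\mapsto a\ot f(x)$, $\overline{\mathrm{c}\ot x^{-1}p(x)}\mapsto\mathbf{c}$ recorded above is, by the very way $\wh\CA_\phi$ was defined (namely by transporting the bracket of $\wh{\CC_\CA}_\phi$ along it), a Lie algebra isomorphism $\wh{\CC_\CA}_\phi\xrightarrow{\sim}\wh\CA_\phi$. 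Hence a restricted $\wh\CA_\phi$-module is the same datum as a restricted $\wh{\CC_\CA}_\phi$-module, and Theorem~\ref{thm:re-m} already identifies the latter with a $\phi$-coordinated $V_{\CC_\CA}$-module via $Y_W(a,z)=a_\phi(z)$ for $a\in\CC_\CA$.

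Next I would match the level. Under the isomorphism the central element $\mathbf{c}$ corresponds to $\overline{\mathrm{c}\ot x^{-1}p(x)}$, so ``level $\ell$'' means that $\overline{\mathrm{c}\ot x^{-1}p(x)}$ acts as the scalar $\ell$. To see that this is exactly the condition for a $\phi$-coordinated $V_{\CC_\CA}$-module to descend to $V_{\wh\CA}(\ell,0)$, I would compute the field attached to $\mathrm{c}$. Since $\partial\mathrm{c}=0$, relation \eqref{eq:deraphiz} gives $(p(z)\frac{d}{dz})\mathrm{c}_\phi(z)=(\partial\mathrm{c})_\phi(z)=0$; moreover, because $\overline{\mathrm{c}\ot p(x)\frac{d}{dx}h(x)}=-\overline{\partial\mathrm{c}\ot h(x)}=0$ for every $h$, the element $\overline{\mathrm{c}\ot x^np(x)}$ depends only on the residue of $x^n$, so that $\overline{\mathrm{c}\ot x^np(x)}=\delta_{n,-1}\,\overline{\mathrm{c}\ot x^{-1}p(x)}$ and hence
\[
Y_W(\mathrm{c},z)=\mathrm{c}_\phi(z)=\overline{\mathrm{c}\ot x^{-1}p(x)}
\]
is the constant operator given by the central action. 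Thus the level-$\ell$ condition is precisely $Y_W(\mathrm{c}-\ell\vac,z)=0$.

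Finally I would assemble the three correspondences. A $\phi$-coordinated $V_{\CC_\CA}$-module factors through the quotient $V_{\CC_\CA}/\<\mathrm{c}-\ell\>\cong V_{\wh\CA}(\ell,0)$ exactly when the ideal $\<\mathrm{c}-\ell\>$ acts by zero, which by the previous step is the level-$\ell$ condition. Combining this with Theorem~\ref{thm:re-m} and the Lie isomorphism $\wh{\CC_\CA}_\phi\cong\wh\CA_\phi$ yields that $\phi$-coordinated $V_{\wh\CA}(\ell,0)$-modules are exactly restricted $\wh\CA_\phi$-modules of level $\ell$. I expect no serious obstacle, since the construction of $\wh\CA_\phi$ makes the isomorphism tautological; the only points demanding care are the scalar nature of $\mathrm{c}_\phi(z)$ and the identification of the two restrictedness conditions---phrased with $x^np(x)$ in Definition~\ref{de:resccmod} but with $x^n$ in the statement---and both are handled exactly as in the affine case, the latter via the smearing formula \eqref{modact} of Lemma~\ref{lem:resmod}.
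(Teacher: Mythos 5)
Your proposal is correct and follows exactly the route the paper intends: the paper offers no explicit proof beyond the phrase ``In view of Theorem~\ref{thm:re-m}'', having already defined $\wh\CA_\phi$ precisely by transporting the bracket of $\wh{\CC_\CA}_\phi$ along the map $\overline{a\ot f(x)}\mapsto a\ot f(x)$, $\overline{\mathrm{c}\ot x^{-1}p(x)}\mapsto\mathbf{c}$, so the isomorphism you call tautological is indeed the paper's starting point, and the level-$\ell$/ideal matching via $V_{\wh\CA}(\ell,0)\cong V_{\CC_\CA}/\<\mathrm{c}-\ell\>$ mirrors the affine case (Proposition~\ref{prop:affvaphimod}). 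Your added details---the computation that $\mathrm{c}_\phi(z)=\overline{\mathrm{c}\ot x^{-1}p(x)}$ is constant and the smearing-formula reconciliation of the two restrictedness conditions---are correct elaborations of points the paper leaves implicit.
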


  \begin{remt}
Equip $\CA=\C[y,y^{-1}]$ with a bilinear operator
\[\circ: \CA\times \CA\rightarrow\CA,\quad (y^i,y^j)\mapsto jy^{i+j}.\]
Then $(\CA,\circ)$ is a Novikov algebra.
Unlike the usual affine Lie algebra case (cf. Lemma \ref{lem:affiso}),  it follows from \cite{DZ} that the Lie algebra $\wh\CA$
is not isomorphic to $\wh\CA_\phi$ with $\phi=xe^z$.
  \end{remt}

\end{document}